\numberwithin{equation}{section}
\newtheorem{de}{Definition}[section]
\newtheorem{thm}{Theorem}[section]
\newtheorem{rem}[thm]{Remark}
\newtheorem{cor}[thm]{Corollary}
\newtheorem{prop}[thm]{Proposition}
\newtheorem{lem}[thm]{Lemma}
\renewcommand{\dim}{\noindent\textbf{Proof.} }
\newcommand{\finedim}{{\unskip\nobreak\hfil\penalty50
   \hskip2em\hbox{}\nobreak\hfil\mbox{$\Box$ \qquad}
   \parfillskip=0pt \finalhyphendemerits=0\par\medskip}}
\newcommand{\R}{\mathbb{R}}
\newcommand{\Z}{\mathbb{Z}}
\newcommand{\Om}{\Omega}
\newcommand{\al}{\alpha}
\newcommand{\xs}{\overline{x}}
\newcommand{\ts}{\overline{t}}
\newcommand{\p}{\partial}
\renewcommand{\theta}{\vartheta}
\renewcommand{\epsilon}{\varepsilon}
\newcommand{\ep}{\epsilon}
\newcommand{\I}{\mathcal{I}_1}
\newcommand{\us}{\overline{u}}
\renewcommand{\leq}{\leqslant}
\renewcommand{\le}{\leqslant}
\renewcommand{\geq}{\geqslant}
\renewcommand{\ge}{\geqslant}
\newcommand{\beq}{\begin{equation}}
\newcommand{\eeq}{\end{equation}}
\newcommand{\beqs}{\begin{equation*}}
\newcommand{\eeqs}{\end{equation*}}
\newcommand{\beqa}{\begin{eqnarray}}
\newcommand{\eeqa}{\end{eqnarray}}
\newcommand{\beqas}{\begin{eqnarray*}}
\newcommand{\eeqas}{\end{eqnarray*}}
\title[]{From  the Peierls-Nabarro model to  the equation of motion of the dislocation continuum}
\author{Stefania Patrizi and Tharathep Sangsawang}
\address[Stefania Patrizi and Tharathep Sangsawang]{
Department of Mathematics,
University of Texas at Austin,
2515 Speedway Stop C1200,
Austin, Texas 78712-1202, USA}
\email{spatrizi@math.utexas.edu} 
\email{tsangsaw@math.utexas.edu}
\subjclass[2010]{82D25, 35R09, 74E15, 35R11, 47G20.}
\keywords{Peierls-Nabarro model, nonlocal integro-differential equations,
dislocation dynamics, fractional Allen Cahn, homogenization.}
\begin{document}

\begin{abstract}
We consider a semi-linear integro-differential equation  in dimension one associated to the half Laplacian
whose solution represents the atom dislocation in a crystal.
The equation  comprises the evolutive version of the classical
Peierls-Nabarro model. 
We show that for  a large number of dislocations, the solution, properly rescaled, converges to the  solution of a 
well known equation called by Head \cite{H}
 ``the equation of motion of the dislocation continuum".  The limit equation is a model for the macroscopic 
 crystal plasticity  with density of dislocations. 
In particular, we recover the  so called Orowan's law which states that  dislocations move at a velocity proportional to the effective stress.
\end{abstract}
\maketitle

\section{Introduction}

In this paper we are  interested in studying  the behavior as $\ep\to0$ of the solution $u^\ep$ of 
the following 
 integro-differential equation:
\beq\label{uepeq}\begin{cases}
\delta\partial_t u^\ep=\I [u^\ep]-\displaystyle\frac{1}{\delta}W'\left(\frac{u^\ep}{\ep}\right)&\text{in }(0,+\infty)\times\R\\
u^\ep(0,\cdot)=u_0(\cdot)&\text{on }\R
\end{cases}\eeq
where $\ep,\,\delta>0$ are small scale parameters and  $\delta=\delta_\ep\to 0$ as $\ep\to0$, $W$ is a periodic potential and we denote by  $\I$ is the so-called fractional Laplacian 
 of order 1, $-(-\Delta)^\frac{1}{2}$, 
defined on the Schwartz class $\mathcal{S}(\R)$ by
\beq\label{halflapfourier}\widehat{{(-\Delta )^\frac{1}{2}v}\
}(\xi)=|\xi|\ \widehat{v}(\xi),\eeq where $\widehat{v}$ is the
Fourier transform of $v$.  It is well known, see e.g. \cite{stein},
 that $\I$  may be also represented as $$ \I[v](x)=\frac{1}{\pi}PV
\displaystyle\int_{\R}\displaystyle\frac{v(y)-v(x)}{(y-x)^{2}}dy,$$ where PV stands for principal value. 
 See also~\cite{s} or~\cite{dnpv} for a basic introduction
to the fractional Laplace operator. 

We assume that~$W$ is a multi-well potential with nondegenerate
minima at integer points.
More precisely, we suppose that
\begin{equation}\label{Wass}
\begin{cases}W\in C^{2,\beta}(\R)& \text{for some }0<\beta<1\\
W(u+1)=W(u)& \text{for any } u\in\R\\
W=0& \text{on }\Z\\
W>0 & \text{on }\R\setminus\Z\\
W''(0)>0.\\
\end{cases}
\end{equation}
On the function $u_0$ we assume
\begin{equation}\label{u^0ass}
\begin{cases}
u_0\in C^{1,1}(\R)\\
u_0 \text{ non-decreasing}.
\end{cases}
\end{equation}

Equation \eqref{uepeq} is a rescaled version of the so called Peierls-Nabarro model, which is a phase field model describing
dislocations. Dislocations are line defects in crystals. Their typical length is
of the order of  $10^{-6}m$ and their thickness of order of
$10^{-9}m$. When the material is submitted to shear stress, these
lines can move in the crystallographic planes (slip planes) and their dynamics
is one of the main explanation of the plastic behavior of metals. We refer the reader to the book \cite{hl} for a tour in the theory of dislocations.
Dislocations  can be described at several scales by different models:
\begin{itemize}
\item atomic scale (Frenkel-Kontorova model), 
\item microscopic scale (Peierls-Nabarro model), 
\item mesoscopic scale (Discrete dislocation dynamics), 
\item macroscopic scale (Elasto-visco-plasticity with density of dislocations). 
\end{itemize}

 Our goal in this paper is to understand the large scale limit of the Peierls-Nabarro model  for  a  {\em large} number of
 {\em 
  parallel straight edge dislocation lines in the same slip plane with the same Burgers' vector, moving with self-interactions}. The number of dislocations is of order $1/\ep$, while the distance between neighboring dislocations is (at microscopic scale) of order $1/\delta$. 
 Rescaling the Peierls-Nabarro model leads to  equation \eqref{uepeq}.  The model is explained in further details in Section \ref{nabarrrosec}.

We show that at macroscopic scale the density of dislocations is governed by the following evolution law:
\beq
\label{ubareq}\begin{cases}
\partial_t u=c_0\partial_x u\,\I[ u]&\text{in }(0,+\infty)\times\R\\
u(0,\cdot)=u_0&\text{on }\R
\end{cases}\eeq
where  $c_0>0$ is defined in the forthcoming \eqref{c0}. Under  assumption \eqref{u^0ass}, there exists a unique non-decreasing in $x$ viscosity solution $\us$ of 
\eqref{ubareq} (see Section \ref{prelimsec}).
Our main result is the following:
\begin{thm}\label{mainthm}
Assume \eqref{Wass} and \eqref{u^0ass}.   Let $u^\ep$ be the viscosity solution of~\eqref{uepeq}.
Then, $u^\ep$ converges locally uniformly in $(0,+\infty)\times\R$ to the viscosity solution $\us$ of~\eqref{ubareq}, as $\ep\to0$. 
\end{thm}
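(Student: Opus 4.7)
The plan is to follow the classical framework for singular limits of viscosity solutions, adapted to the nonlocal Peierls-Nabarro setting. First, I would establish uniform $L^\infty$ bounds on $u^\epsilon$ via the comparison principle, using barriers built from $u_0$ plus small correctors that absorb the oscillatory effect of $W$, and exploit the fact that monotonicity in $x$ is preserved by the flow (the derivative $\partial_x u^\epsilon$ satisfies a linear equation with a lower-order zero-order coefficient, to which the maximum principle applies). These bounds guarantee that the half-relaxed limits $\bar u^*(t,x) := \limsup_{(s,y,\epsilon)\to(t,x,0^+)} u^\epsilon(s,y)$ and $\bar u_*(t,x) := \liminf_{(s,y,\epsilon)\to(t,x,0^+)} u^\epsilon(s,y)$ are well-defined, bounded, semicontinuous functions with $\bar u_* \leq \bar u^*$, both non-decreasing in $x$.

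The heart of the proof is to show that $\bar u^*$ is a viscosity subsolution and $\bar u_*$ a viscosity supersolution of \eqref{ubareq}, which I would do by the perturbed test function method of Evans. The corrector is built from the Peierls-Nabarro transition layer $\phi$: the unique monotone solution to $\I[\phi] = W'(\phi)$ with $\phi(-\infty)=0$, $\phi(+\infty)=1$. The mobility constant $c_0 > 0$ arises (up to normalization) as $1/\|\phi'\|_{L^2(\R)}^2$ and encodes Orowan's law for a single isolated dislocation driven by a constant stress. Given a smooth $\psi$ touching $\bar u^*$ strictly from above at $(t_0,x_0)$ with $\psi_x(t_0,x_0)>0$, I would construct a perturbed test function of the form
\[
\psi^\epsilon(t,x) \;=\; \epsilon \sum_{i\in\Z} \phi\!\left(\frac{x - x_i^\epsilon(t)}{\epsilon\delta}\right) + \textrm{const},
\]
where the dislocation positions $x_i^\epsilon(t)$ are defined implicitly by $\psi(t,x_i^\epsilon(t)) = i\epsilon$, so that $\psi^\epsilon$ approximates $\psi$ at the macroscopic scale. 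Substituting $\psi^\epsilon$ into \eqref{uepeq}: the potential term $(1/\delta)W'(\psi^\epsilon/\epsilon)$ cancels the leading $(1/\delta)\I[\phi]$ contribution of each individual profile via the Peierls-Nabarro identity; the residual nonlocal stress felt by each dislocation converges at leading order to $\I[\psi](t_0,x_0)$; and matching this with the time derivative $\delta\partial_t\psi^\epsilon \approx -\sum_i \dot x_i^\epsilon\,\phi'((\cdot - x_i^\epsilon)/(\epsilon\delta))$ forces $\dot x_i^\epsilon = c_0\,\I[\psi](t_0,x_0)$. Since implicit differentiation of $\psi(t,x_i^\epsilon(t))=i\epsilon$ yields $\dot x_i^\epsilon = -\psi_t/\psi_x$ at $(t_0,x_0)$, one recovers $\psi_t = c_0\,\psi_x\,\I[\psi]$ at $(t_0,x_0)$ with the sign convention of \eqref{ubareq}, giving the subsolution inequality; the supersolution inequality is obtained symmetrically.

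The main obstacle is the construction and rigorous analysis of $\psi^\epsilon$, in particular controlling the interactions among the $O(1/\epsilon)$ dislocations. Two competing effects must be balanced: individual profiles have microscopic width $\epsilon\delta$ and are separated by the mesoscopic spacing $\epsilon$, so $\delta\to 0$ provides scale separation; however the transition profile only satisfies the algebraic decay $\phi(x) - H(x) \sim c/x$ characteristic of the half-Laplacian, so long-range nonlocal interactions are nontrivial. Precise estimates showing that $\I[\psi^\epsilon] - (1/\delta)\sum_i \I[\phi]((\cdot - x_i^\epsilon)/(\epsilon\delta))$ converges to $\I[\psi]$ uniformly on compact sets, together with quantitative control of the linearization error around each profile in the potential term, are the key technical ingredients. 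One must also ensure that $\psi^\epsilon(0,\cdot)$ approximates $u_0$ from the correct side so the argument closes at the initial time.

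Once $\bar u^*$ and $\bar u_*$ are identified as sub- and supersolutions of \eqref{ubareq} with the appropriate initial data, the comparison principle for \eqref{ubareq} recalled in Section \ref{prelimsec} yields $\bar u^* \leq \bar u_*$ on $[0,+\infty)\times\R$, while the reverse inequality is tautological. Hence $\bar u^* = \bar u_* =: \us$ is continuous and coincides with the unique non-decreasing viscosity solution of \eqref{ubareq}, and the pointwise convergence automatically upgrades to locally uniform convergence on $(0,+\infty)\times\R$.
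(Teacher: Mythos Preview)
Your overall architecture matches the paper's, but there is a genuine gap at the very last step. Your perturbed test function argument, by its nature, only produces the viscosity inequality at points where the test function satisfies $\psi_x(t_0,x_0)>0$: the construction of the dislocation positions $x_i^\epsilon(t)$ via $\psi(t,x_i^\epsilon(t))=i\epsilon$ requires local strict monotonicity of $\psi$ in $x$, and the entire computation (in particular the implicit differentiation giving $\dot x_i^\epsilon=-\psi_t/\psi_x$) collapses otherwise. You acknowledge this by writing ``with $\psi_x(t_0,x_0)>0$'', but then you assert that $\bar u^*$ and $\bar u_*$ are sub- and supersolutions and invoke the comparison principle for \eqref{ubareq}. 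That step does not follow: the comparison principle in Proposition~\ref{existHeff} requires the full viscosity inequalities, including at test points with $\psi_x=0$, and you have not verified those. The paper flags this explicitly (``This is not enough to conclude that by the comparison principle $u^+\leq u^-$'') and circumvents it by a separate argument: it builds, for each $\epsilon$, a smooth classical solution $w^\epsilon$ of \eqref{ubareq} with $\partial_x w^\epsilon>0$ everywhere (using the global existence theory for \eqref{transporteq} from \cite{cc} with initial datum $\sum_i\phi'((\cdot-x_{0,i})/(\epsilon\delta))/\delta$), shows that $u^+-w^\epsilon$ cannot have an interior maximum because at such a point $w^\epsilon$ would be an admissible test function with strictly positive $x$-derivative, and controls the behavior at $t=0$ and $|x|\to\infty$ by direct barrier estimates (Lemma~\ref{u+-atinfinitylem}). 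This yields $u^+\le w^\epsilon+o_\epsilon(1)\to\us$.

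A second, more technical, omission: your ansatz $\epsilon\sum_i\phi((x-x_i^\epsilon)/(\epsilon\delta))$ is not sharp enough to close the supersolution computation. The paper needs a second corrector $\psi$ solving \eqref{psi} (depending on the target stress $L_0+L_1$) and takes $h^\epsilon=\sum_i\epsilon(\phi+\delta\psi)(\cdot)$; without it the $O(1)$ term $c_0 L\phi'(z_{i_0})$ coming from $\dot x_{i_0}$ does not cancel against the residual $\frac{L}{\alpha}(W''(\phi)-W''(0))$ produced by the Taylor expansion of $W'$. This is exactly the ``quantitative control of the linearization error'' you allude to, but it requires an additional PDE for the corrector, not merely estimates on $\phi$.
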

\begin{rem}
We do not assume any assumption  about how $\delta$ goes to 0 when $\ep\to0$.
\end{rem}
The limit equation \eqref{ubareq} represents the plastic flow rule for the   macroscopic crystal plasticity  with density of dislocations.
The theorem says  that in this regime, the plastic strain velocity $\partial_t u$ in (\ref{ubareq})
is proportional to the dislocation density $u_x$ times the effective stress $\I[u]$. This physical law is  known as  Orowan's equation, see e.g. \cite{sed} p.~3739.
 Equation 
\beq\label{limiteq}
\partial_t u=c_0\partial_x u\,\I[ u]
\eeq
is an integrated form of a model studied by Head \cite{H} for the self-dynamics of a dislocation density represented by $u_x$.
 Indeed, denoting $f=u_x$, differentiating \eqref{limiteq}, we see that,  at least formally, $f$ solves
\beq\label{transporteq}\partial_t f=c_0\partial_x(f\mathcal{H}[f])\eeq
where $\mathcal{H} $ is Hilbert transform defined in Fourier variables  by
 \beqs\widehat{{\mathcal{H}[v]}\
}(\xi)=i\,\text{sgn}(\xi)\ \widehat{v}(\xi),\eeqs for $v\in \mathcal{S}(\R)$. The Hilbert transform has the following representation formula, see e.g. \cite{stein}, 
$$\mathcal{H}[v](x)=\frac{1}{\pi}PV\int_\R\frac{v(y)}{y-x}dy$$
and if $u\in C^{1,\alpha}(\R)$ and $u_x\in L^p(\R)$ with $1<p<+\infty$, then 
\beq\label{halflaplhilbert}\I[u]=\mathcal{H}[u_x].\eeq
Identity \eqref{halflaplhilbert} can be easily proven by performing an integration by parts or using Fourier variables.
The conservation of mass  satisfied by  the positive integrable solutions of \eqref{transporteq} reflects the fact that if  $f=u_x$ is the density of dislocations, no dislocations are created or annihilated. 

Equation \eqref{transporteq} was also proposed by Constantin, et al. \cite{constantin} as a simplified  one dimensional version of the 2-D quasi-geostrophic  model.
In \cite{cc}, Castro and C\`{o}rdoba  show that given an initial datum $f_0(x)$ which belongs to $C^\alpha(\R)\cap L^2(\R)$ and  is strictly positive, then there exists a smooth (analytic in $x$) global (for all times) solution of \eqref{transporteq} that at time 0 is equal to $f_0(x)$.
If $f_0(x)$ is non-negative and 0 at some point, the authors show  the existence of  a  local   solution that  blows up  in finite time.
On the other hand, Carrillo, Ferreira and Precioso \cite{carrillo} apply transportation methods and show that the solution can be obtained as a gradient flow in the space  of probability measures with bounded second moment.
Finally, we mention that  equation  \eqref{transporteq} is a particular case of the fractional porous medium equation
\beqs \partial_t u=\nabla\cdot(u^{m-1}\nabla (-\Delta)^{-s}u)\eeqs
recently studied in \cite{caffavaz,caffavaz2,caffavazsoria}. Indeed, it corresponds to the case $s=1/2$ and $m=2$ in dimension 1. 
Self-similar solutions and decay estimates  for  equation \eqref{limiteq}   have been 
studied in \cite{bkm}.

From a mathematical point of view,  as $\delta$ and $\ep$ go to 0 simultaneously,  \eqref{uepeq} is both a homogenization problem (even though there is no a cell problem and the limit equation is explicit) and a non-local Allen-Cahn type equation. As for an  Allen-Cahn type problem,  the solution gets closer and closer  to the stable minima of the potential, that for the rescaled potential $W(\cdot/\ep)$, by  \eqref{Wass}, are the points of the set $\ep\Z$, and converges 
 to a continuous function, the solution of \eqref{ubareq},  when $\ep$ goes to 0.  
 To prove Theorem \ref{mainthm}, the idea is to approximate the dislocation particles with points $x_i(t)$ where the limit function $u$ attains the value $\ep i$ at time $t$. We then provide  a discrete  approximation formula  for the operator $\I$ with uniform  error estimates over $\R$, which holds true for any $C^{1,1}$ function,  and we use it to show that 
 $$\dot{x}_i=-\frac{\partial u_t(t,x_i(t))}{\partial_x u(t,x_i(t))}\simeq -c_0\I[u(t,\cdot)](x_i(t)).$$
 The strategy and the heuristic of the proofs are explained in Section \ref{strategysec}.

\subsection{The 1-D  Peierls-Nabarro}\label{nabarrrosec}
The Peierls-Nabarro model \cite{n,p,nabarro} is a phase field model for dislocation
dynamics incorporating atomic features into continuum framework.
In a phase field approach, the dislocations are represented by
transition of a continuous field.
We briefly review the model in the case of an {\em edge straight} dislocation in a
crystal with simple cubic lattice.  In a Cartesian system of
coordinates $x_1x_2x_3$, we assume that the straight dislocation is located
in the slip plane $x_1x_3$ (where the dislocation can move) and perpendicular to the axis $x_1$.
In the case of an edge dislocation  the Burgers' vector  (i.e. a fixed vector associated to the
dislocation) is perpendicular to the dislocation line, thus in the direction of  the   axis $x_1$. We write this
Burgers' vector as $be_1$ for a real $b$. 
After a section of the three-dimensional crystal with the plane $x_1x_2$, the dislocation line can be identified with a point  on the $x_1$ axis.
The disregistry of the
upper half crystal $\{x_2>0\}$ relative to the lower half
$\{x_2<0\}$ in the direction of the Burgers' vector is
$\phi(x_1)$, where $\phi$ is a phase parameter between $0$ and
$b$. Then the dislocation point can be for instance localized by
the level set $\phi=b/2$. 
In the Peierls-Nabarro model, the total energy is given by
\beq\label{energy}
\mathcal{E}=\mathcal{E}^{el}+\mathcal{E}^{mis}.
\eeq
In \eqref{energy}, 
$\mathcal{E}^{el}$ is the elastic energy  induced by the dislocation. In the isotropic case   and for a straight dislocation line
it takes the form $$\mathcal{E}^{el}=\frac{1}{2}\int_{\R\times\R^+}|\nabla U|^2\,dx_1\,dx_2,$$ where $U:\R\times\R^+\to\R$ represents the displacement which is such that $U(x_1,0)=\phi(x_1)$. 
$\mathcal{E}^{mis}$ is the so called misfit energy due to
the nonlinear atomic interaction across the slip plane, \beq\label{misfitenergy}
\mathcal{E}^{mis}(\phi)=\int_{\R}W(U(x_1,0))\, dx_1=\int_{\R}W(\phi(x_1))\ dx_1,\eeq
where $W(\phi)$ is the interplanar potential. In a  general model, one can consider a potential $W$ satisfying assumptions \eqref{Wass}.
The periodicity of $W$ reflects the periodicity of the crystal,
while the minimum property is consistent with the fact that the
perfect crystal is assumed to minimize the energy.
The equilibrium configuration of the edge dislocation is obtained by
minimizing the total energy with respect to $U$, under the constraint that far from the dislocation core, the function
 $\phi$ tends to $0$ in one half line and to $b$ in the other half line. The corresponding Euler-Lagrange equation can be written in terms of  the phase transition $\phi$ as
 $$\I[\phi]=W'(\phi).$$
 Assume for simplicity $b=1$, if we fix the value of $\phi$ at the origin to be $1/2$, then  for  $x=x_1$   the 1-D phase transition is solution to:
 \begin{equation}\label{phi}
\begin{cases}
\I[\phi]=W'(\phi)&\text{ in } \R\\
\phi'>0&\text{ in}\quad \R\\
\displaystyle\lim_{x\to-\infty}\displaystyle\phi(x)=0,\quad\lim_{x\to+\infty}\displaystyle\phi(x)=1, \quad \phi(0)=\displaystyle\frac{1}{2}.
\end{cases}
\end{equation}
Existence of a unique solution of \eqref{phi} has been proven in \cite{csm}.
In the classical Peierls-Nabarro model the potential is given by 
$
W(u)=\frac{
b^2}{4\pi^2d}\left(1-\cos\left(\frac{2\pi u}{b}\right)\right),$ where $d$ is the lattice spacing perpendicular to the slip plane, 
and the 1-D phase transition,  found by Nabarro \cite{n}, is explicit: 
$\phi(x)=\frac{b}{2}+ \frac{b}{\pi}\arctan\left(\frac{2x}{d}\right)$.

\medskip

In the face cubic structured (FCC) observed in many metals and
alloys, dislocations move at low temperature on the slip plane.
The dynamics for a collection of straight dislocations lines with the same
Burgers' vector and all contained in a single slip plane, moving with self-interactions (no exterior forces)
 is then
described by the evolutive version  of the Peierls-Nabarro model
 (see for instance \cite{MBW} and \cite{Denoual}):
\begin{equation}\label{nabarroevolutintro}
\p_{t} u=\I[u(t,\cdot)]-W'\left(u\right)\quad\text{in}\quad \R^+\times\R.\\
\end{equation}
In this paper we consider equation \eqref{nabarroevolutintro} when the  number of dislocations is of order $1/\ep$ and  neighboring dislocations are at distance at microscopic scale of order  $1/\delta$.  
This can be represented by the following initial condition
\beqs u(0,x)=\sum_{i=1}^{N_\ep}\phi\left(x-\frac{y_i}{\delta}\right),\eeqs
where $\phi$ is the solution of \eqref{phi}, $N_\ep\sim1/\ep$ and 
$$0\le y_{i+1}-y_i\sim 1.$$
We want to identify at {\em large (macroscopic) scale} the evolution model for
the dynamics of a density of dislocations. We consider the
following rescaling
$$u^\ep(t,x)=\ep u\left(\frac{t}{\ep \delta^2},\frac{x}{\ep\delta}\right),$$
then we see that $u^\ep$ is solution of \eqref{uepeq} with initial datum 
\beq\label{u0introphi}u^\ep(0,x)=\sum_{i=1}^{N_\ep}\ep \phi\left(\frac{x-\ep y_i}{\ep\delta}\right).\eeq
Here $\epsilon$ 
describes the ratio between the microscopic scale and the macroscopic scale.
After the rescaling we see that the distance between neighboring dislocations is of order $\ep\sim1/N_\ep$.
Every dislocation point is described by a phase transition $\ep\phi\left(\frac{x-\ep y_i}{\ep\delta}\right)$ whose derivative is of order 
$1/\delta$.  

More in general, we consider an initial datum $u_0$ satisfying \eqref{u^0ass}.
One can actually prove (see Proposition \ref{approxpropfinal}) that any function satisfying \eqref{u^0ass}, normalized such that the infimum is 0, can be approximated by a function of the form  \eqref{u0introphi}. The monotonicity of $u_0$ reflects the fact that the dislocations have all the same orientation so that no annihilations occur. 

\subsection{The  discrete dislocation dynamics ($\delta=0$)}
When $\ep=1$,   \eqref{uepeq} is a non-local Allen-Cahn equation. In \cite{gonzalezmonneau}, Gonz\'{a}lez and Monneau, show that 
the solution converges as $\delta\to 0$ to the stable minima of the potential $W$, that is integers.
More precisely, if the initial datum is well prepared,  the solution converges to a sum of Heaviside functions of the form 
$\sum_{i=1}^N H(x-y_i(t))$, where 
the interface points $y_i(t)$,  $i=1,\ldots,N$ evolve in time   driven by  the following system of  ODE's:
\beq\label{DDD}\begin{cases}\dot{y}_i=\displaystyle \frac{c_0}{\pi}\displaystyle\sum_{j\neq i}\frac{1}{y_i-y_j}&\text{in }(0,+\infty)\\
y_i(0)=y_i^0.
\end{cases}
\eeq
Here the points $y_i^0$,  $i=1,\ldots,N$,  are given in the initial condition and 
\beq\label{c0}c_0=\left(\int_\R (\phi')^2\right)^{-1},\eeq
with $\phi$ the solution of \eqref{phi}.
System \eqref{DDD} corresponds to the classical  discrete dislocation dynamics 
in the particular case of parallel straight edge dislocation lines in the same slip plane with the same Burgers' vector
and describe the dynamics of dislocation particles  at mesoscopic scale.

\subsection{Brief review of the literature}
When $\delta=1$, \eqref{uepeq} is an homogenization problem and the convergence of the solution when $\ep\to0$ have been studied by Monneau and  the first author in \cite{mp} in any dimension. In this case it is proven that $u^\ep$ converges to the solution of an homogenized equation of type
$\partial_tu=\overline{H}(\nabla u, \I[u])$, where the effective Hamiltonian $\overline{H}$ is implicitly defined through a cell problem. In 
\cite{mp2} it is proven that in dimension 1 $\overline{H}(\delta p,\delta L)\simeq c_0\delta^2 |p|L$ as $\delta\to0$.  See also \cite{pv} for  fractional operators of any order $s\in(0,2)$. 
The proofs of \cite{mp,mp2} cannot be adapted here as the errors obtained blow up when $\ep$ and  $\delta$ converge to 0 simultaneously. 
For more results about homogenization of local and nonlocal first order operators with $u^\ep/\ep$ dependence we refer to 
\cite{im, imr, barles}.
Collisions of dislocation particles and/or long time behavior for the solution of \eqref{uepeq} with $\ep=1$ have been studied  in \cite{pv2, pv3,pv4,cddp}.
In \cite{gm} and \cite{gm2}, Garroni
and Muller study a variational model for dislocations that is the
variational formulation of the stationary Peierls-Nabarro
equation in dimension 2, and  they derive a line tension model.

The passage from  discrete  models of type \eqref{DDD}  ($\delta=0$)  to   continuum models has been stu\-died in several papers. In \cite{fim},  Forcadel, Imbert and Monneau prove that the function $\sum_{i=1}^{N_\ep} H(x-y_i(t))$, where $y_i$, $i=1,\ldots,N_\ep$  solve \eqref{DDD}, properly rescaled,
 converges to the continuous viscosity solution of an homogenized equation, which is \eqref{limiteq} when the forcing term is 0. 
In \cite{MeMo}, van Meurs and Morandotti present a  discrete-to-continuum limit passage for a system of dislocation particles  with a regularized potential, which includes annihilation. 
Convergence of evolving interacting particle systems in dimension 2 have been studied in \cite{gmps}. For further related results we refer the reader to 
\cite{glp,mm,mmp,mora,sppg} and references therein. 

 

\subsection{Organization of the paper}
The paper is organized as follows. In Section \ref{strategysec} we present the strategy and the heuristic of the proof of Theorem \ref{mainthm}.
In Section \ref{prelimsec} we recall some general auxiliary results that  will be used in the rest of the paper.
 In Section \ref{discreteIsec} we prove a discrete approximation formula for  the operator $\I$. Section \ref{mainthmsec} is devoted to the proof of our main result, Theorem \ref{mainthm}.
The main comparison result used in the proof of the theorem is shown in Section \ref{comparisonu+u-sec}. Finally the proofs of some auxiliary lemmas are given in Section \ref{lemmatasec}.
\subsection{Notations} 
 We denote by $B_r(x)$ the ball of radius
$r$ centered at $x$. The cylinder $(t-\tau,t+\tau)\times B_r(x)$
is denoted by $Q_{\tau,r}(t,x)$.
$\lfloor x \rfloor$ and $\lceil x\rceil$ denote respectively the
floor and the ceil integer parts of a real number $x$.

For $r>0$, we denote 
\beq\label{i1v}\I^{1,r}[v](x)=\frac{1}{\pi}PV
\displaystyle\int_{|y-x|\leq r}\displaystyle\frac{v(y)-v(x)}{(y-x)^{2}}dy,\eeq
and 
\beq\label{i2v}\I^{2,r}[v](x)=\frac{1}{\pi}
\displaystyle\int_{|y-x|>r}\displaystyle\frac{v(y)-v(x)}{(y-x)^{2}}dy.\eeq
Then we can write
$$\I[v](x)=\I^{1,r}[v](x)+\I^{2,r}[v](x).$$
We denote by $USC_b((0,+\infty)\times\R)$ (resp.,
$LSC_b((0,+\infty)\times\R)$) the set of upper (resp., lower)
semicontinuous functions on $(0,+\infty)\times\R$ which are bounded on
$(0,T)\times\R$ for any $T>0$ and we set
$C_b((0,+\infty)\times\R):=USC_b((0,+\infty)\times\R)\cap
LSC_b((0,+\infty)\times\R)$.
We denote by $C_b^2((0,+\infty)\times\R)$ the subset of functions of $C_b((0,+\infty)\times\R)$ with continuous second derivatives.
Finally,  $C^{1,1}(\R)$ is the set of functions with bounded $C^{1,1}$ norm over~$\R$.

Given a sequence $\{u^\ep\}$ we denote
$${\limsup_{\ep\to0}}^*u^\ep(t,x)=\sup\Big\{\limsup_{\ep\to0} u^\ep(x_\ep)\,|\,x_\ep\to x\Big\},$$
and 
$${\liminf_{\ep\to0}}_*u^\ep(t,x)=\inf\Big\{\liminf_{\ep\to0} u^\ep(x_\ep)\,|\,x_\ep\to x\Big\}.$$

Given a quantity   $E=E(x)$, we write  $E=O(A)$ is there exists a constant $C>0$ such that, for all $x$, 
$$|E|\le C A.$$
We write $E=o_\ep(1)$ if 
$$\lim_{\ep \to0} E=0,$$
uniformly in $x$. 

\section{Strategy and heuristic of the proofs}\label{strategysec}
In this section we  explain the steps that we will follow  to prove Theorem \ref{mainthm}
 and the heuristic of the main proofs. 
 \subsection{Approximation of $\I$}
 The first result is a discrete  approximation formula for the fractional Laplace $\I$ of non-decreasing $C^{1,1}$  functions (Proposition \ref{apprIcor} and Proposition \ref{apprIcorall}, see also Remark \ref{remgamma0}). Let $v\in C^{1,1}(\R)$. Assume for simplicity that $v$ is strictly increasing.  Let $\ep>0$ be a  small parameter. Let us define the  points $x_i$ as follows,
 \beq\label{xiintro}v(x_i)=\ep i,\quad i=M_\ep,\ldots, N_\ep\eeq
where $M_\ep:= \left\lceil\frac{\inf_\R v+\ep}{\ep}\right\rceil$ and $N_\ep=\left \lfloor\frac{\sup_\R v-\ep}{\ep} \right \rfloor.$
By the monotonicity of $v$ the points $x_i$ are ordered,
$$x_{i}<x_{i+1}\quad\text{for all }i.$$
 Then, we show that 
\beq\label{heuristicI1xi0}\I[v](x_{i_0})\simeq\frac{1}{\pi}\sum_{i\not=i_0} \dfrac{\ep}{x_i-x_{i_0}},\eeq
where  the error goes to 0 when $\ep\to0$. To show \eqref{heuristicI1xi0}, we consider a small radius $r=r_\ep$ such that $r\to0$ as $\ep\to0$ and we split 
\beqs\sum_{i\not=i_0} \dfrac{\ep}{x_i-x_{i_0}}=\sum_{\stackrel{i\not=i_0}{|x_i-x_{i_0}|\leq r} }\dfrac{\ep}{x_i-x_{i_0}}
+\sum_{\stackrel{}{|x_i-x_{i_0}|> r}} \dfrac{\ep}{x_i-x_{i_0}}.\eeqs
Then, we have 
\beqs\begin{split}\frac{1}{\pi}\sum_{\stackrel{}{|x_i-x_{i_0}|> r}} \dfrac{\ep}{x_i-x_{i_0}}&=
\frac{1}{\pi}\sum_{\stackrel{}{|x_i-x_{i_0}|> r}} \dfrac{v(x_{i+1})-v(x_i)}{x_i-x_{i_0}}\\&
\simeq \frac{1}{\pi}\sum_{\stackrel{}{|x_i-x_{i_0}|> r}} \dfrac{v_x(x_i)(x_{i+1}-x_i)}{x_i-x_{i_0}}\\&
\simeq\frac{1}{\pi} \int_{|x-x_{i_0}|>r}\dfrac{v_x(x)}{x-x_{i_0}}dx\\&
= \frac{1}{\pi}\int_{|x-x_{i_0}|>r}\dfrac{v(x)-v(x_{i_0})}{(x-x_{i_0})^2}dx-\frac{1}{\pi}\frac{v(x_{i_0}+r)+v(x_{i_0}-r)-2v(x_{i_0})}{r}\\
&\simeq \I[v](x_{i_0}),
\end{split}
\eeqs
where we have performed an integration by parts in the fourth equality. 
We can control  the error produced in the approximation by  choosing $r$ not too small ($r$ such that $\ep/r\to0$ as $\ep\to0$).

On the other hand, 
for $i\neq i_0$,
$$\ep(i-i_0)=v(x_{i})-v(x_{i_0})\simeq v_x(x_{i_0})(x_i-x_{i_0})$$
from which (Lemma \ref{lemmaerrorshortdistance}).
\beqs\begin{split}\sum_{\stackrel{i\not= i_0}{|x_i-x_{i_0}|\leq r} }\dfrac{\ep}{x_i-x_{i_0}}&\simeq v_x(x_{i_0})\sum_{\stackrel{i\not= i_0}{|i-i_0|\leq v_x(x_{i_0}) \frac{r}{\ep} }}\frac{1}{(i-i_0)}\\&
\simeq v_x(x_{i_0})\left(\sum_{\stackrel{i\leq i_0-1}{} }\frac{1}{(i-i_0)}+\sum_{\stackrel{i\ge i_0+1}{ }}\frac{1}{(i-i_0)}\right)\\&
=v_x(x_{i_0})\left(-\sum_{\stackrel{k\geq 1}{} }\frac{1}{k}+\sum_{\stackrel{k\ge 1}{} }\frac{1}{k}\right)\\
&=0.
\end{split}
\eeqs
We  can control the error produced by choosing $r$ sufficiently small ($r\le\ep^\frac12$).
Combining the two estimates, we obtain \eqref{heuristicI1xi0}.

We actually show that for any $x$,  
$$ \I[v](x)\simeq \frac{1}{\pi}\sum_{\stackrel{}{|x_i-x|> r}} \dfrac{\ep}{x_i-x}$$
where  the error is uniform over $\R$, that is do not depend on the point $x$, while  the sum 
$$\sum_{\stackrel{i\neq i_0}{|x_i-x|\leq r}} \dfrac{\ep}{x_i-x}$$ may not be zero but depends on   the distance of $x$ from the closest  $x_i$.

All  our estimates hold true for any non-decreasing (non necessarily strictly increasing)  $C^{1,1}$  function. 
\subsection{Approximation of $v$}\label{heurconsub} Let $\phi$ be the transition layer defined by \eqref{phi}. It is known (see Lemma \ref{phiinfinitylem}) that 
if $H(x)$ is the Heaviside function, then $\phi$ exhibits the following behavior at infinity: for $|x|>>1$, 
\beq\label{phiinfqppr}\phi(x)\simeq H(x)-\frac{1}{\alpha\pi x},\eeq
where $\alpha=W''(0)$. 
 Using  estimates  \eqref{phiinfqppr} and \eqref{heuristicI1xi0},  we show (Proposition \ref{approxpropfinal}) that if $v\in C^{1,1}(\R)$ is non-decreasing and $x_i$ are defined by 
 \eqref{xiintro}, then 
 \beq\label{vappintr}v(x)\simeq \sum_{i=M_\ep}^{N_\ep}\ep\phi\left(\frac{x-x_i}{\ep\delta}\right)+\ep M_\ep.\eeq
Notice that $\ep M_\ep\simeq\inf_\R v$. Indeed, assume for simplicity that $x=x_{i_0}$ for some $M_\ep\leq i_0\leq N_\ep$.
Then,  for  $\ep$ and $\delta$ small:  $(x_{i_0}-x_i)/(\delta\ep)>>1$ if  $i\leq i_0-1$,  $(x_{i_0}-x_i)/(\delta\ep)<<-1$ if $i\geq i_0+1$.
Then, by \eqref{phiinfqppr} and  \eqref{heuristicI1xi0},
\beqs\begin{split}  \sum_{i=M_\ep}^{N_\ep}\ep\phi\left(\frac{x_{i_0}-x_i}{\ep\delta}\right)+\ep M_\ep
&=\sum_{\stackrel{i=M_\ep}{} }^{i_0-1}\ep\phi\left(\frac{x_{i_0}-x_i}{\ep\delta}\right)+\ep\phi(0)+\sum_{i=i_0+1}^{N_\ep}\ep\phi\left(\frac{x_{i_0}-x_i}{\ep\delta}\right)+\ep M_\ep\\&
\simeq \sum_{\stackrel{i=M_\ep}{} }^{i_0-1}\ep \left(1+\dfrac{\ep\delta}{\alpha\pi(x_i-x_{i_0})}\right)
+\frac{\ep\delta}{\alpha\pi}\sum_{i=i_0+1}^{N_\ep}\dfrac{\ep}{x_i-x_{i_0}}+\ep M_\ep\\&
=\frac{\ep\delta}{\alpha\pi} \sum_{i\not=i_0} \dfrac{\ep}{x_i-x_{i_0}}+\ep i_0\\&
\simeq \frac{\ep\delta}{\alpha}  \I[v](x_{i_0})+\ep i_0\\&
\simeq \ep i_0\\&
= v(x_{i_0}).
\end{split}
\eeqs
We prove that estimate \eqref{vappintr} holds true for any non-decreasing $C^{1,1}$ function $v$ and that the error is independent of the point $x$.

\subsection{Heuristic of the proof of Theorem \ref{mainthm}}\label{heuristicsec} 
As for an homogenization problem we fix $(t_0,x_0)\in(0,+\infty)\times\R$ and  find an ansatz for $u^\ep$ in a small box $Q_R$ of size $R$ centered 
at the point.  Let $u$ be the limit solution (that here we suppose  to exist and  be smooth). For $R$ small,  all the derivatives of $u$ can be considered constant in $Q_R$:
$$\partial_t u(t,x)\simeq \partial_t u(t_0,x_0),\quad \partial_x u(t,x)\simeq \partial_x u(t_0,x_0)$$
and 
$$\I[u(t,\cdot)](x)\simeq \I[u(t_0,\cdot)](x_0)=: L_0.$$
By the comparison principle $u^\ep$ and thus $u$ is non-decreasing in $x$. 
Assume that 
$$ \partial_x u(t_0,x_0)>0.$$ In particular $u$ is strictly increasing in $x$ in $Q_R$.
For $t$ close to $t_0$, we define the points $x_i(t)$ such that 
\beq\label{xiintrou}u(t,x_i(t))=\ep i.\eeq
Since $u$ is  strictly increasing in $x$ in $Q_R$, if $(t,x_i(t)),\,(t,x_{i+1}(t))\in Q_R$ then  $0<x_{i+1}-x_i\simeq \ep$ (see Lemma \ref{lemdistxi}).
For $i$ such that  $(t,x_i(t))\in Q_R$, by differentiating \eqref{xiintrou} we get
\beqs \partial_t u(t,x_i(t))+\partial_xu(t,x_i(t))\dot{x}_i(t)=0,\eeqs
from which
\beq\label{xidotintro}\dot{x}_i(t)=-\frac{ \partial_t u(t,x_i(t))}{\partial_xu(t,x_i(t))}\simeq -\frac{ \partial_t u(t_0,x_0)}{\partial_xu(t_0,x_0)}. \eeq
Next we consider as   ansatz for  $u^\ep$ the approximation of $u$ given by  \eqref{vappintr} plus a small correction:
\beqs \Phi^\ep(t,x):= \sum_{i=M_\ep}^{N_\ep}\ep\left(\phi\left(\frac{x-x_i(t)}{\ep\delta}\right)+\delta\psi\left(\frac{x-x_i(t)}{\ep\delta}\right)\right)+\ep M_\ep.\eeqs
The function $\psi$ is defined in the forthcoming equation   \eqref{psi} with $L=L_0$.
 For a detailed heuristic motivation of
this correction,
see Section 3.1 of \cite{gonzalezmonneau}.
By \eqref{vappintr}, $\Phi^\ep(t,x)\to u(t,x)$ as $\ep\to 0$.  Fix $(t,x)\in Q_R$ and let $x_{i_0}(t)$ be the closest point among  the $x_i(t)$'s to $x$ and $z_i=(x-x_i(t))/(\ep\delta)$. Plugging into \eqref{uepeq}, we get  (see proof of \eqref{mainlem2} in Section \ref{mainthmsec})
\beqs\begin{split} 0&=\delta\partial_t \Phi^\ep(t,x)-\I [\Phi^\ep(t,\cdot)](x)+\frac{1}{\delta}W'\left(\frac{\Phi_\ep(t,x)}{\ep}\right)
\\&\simeq 
-\phi'(z_{i_0})(\dot{x}_{i_0}(t)+c_0L_0)+
(W''(\phi(z_{i_0})) - W''(0))\left ( \frac{1}{\delta}\sum_{\stackrel{}{i\not=i_0}}^{} \tilde{\phi}(z_i) -\frac{L_0}{\alpha}\right)
\end{split}
\eeqs
where $\tilde\phi(z)=\phi(z)-H(z)$. Suppose for simplicity that 
$x=x_{i_0}(t)$, then 
by \eqref{phiinfqppr} and \eqref{heuristicI1xi0}
\beqs \frac{1}{\delta}\sum_{\stackrel{}{i\not=i_0}}^{} \tilde{\phi}(z_i) -\frac{L_0}{\alpha}\simeq \frac{1}{\alpha\pi}\sum_{i\not=i_0} \dfrac{\ep}{x_i-x_{i_0}}-\frac{L_0}{\alpha}\simeq 0. \eeqs
Since $\phi'>0$, we must have 
$$\dot{x}_{i_0}(t)\simeq-c_0L_0 $$ that is, by \eqref{xidotintro},
$$ \partial_t u(t_0,x_0)\simeq c_0  \partial_x u(t_0,x_0)\I[u(t_0,\cdot)](x_0).$$
Notice that if we define $$y_i(\tau):=\frac{x_i(\ep\tau)}{\ep}$$then the $y_i$'s  solve
\beqs \dot{y}_i(\tau)=\dot{x}_i(\ep\tau)\simeq -c_0L_0 \simeq \frac{c_0}{\pi}\sum_{j\not=i} \dfrac{\ep}{x_i-x_{j}}=\frac{c_0}{\pi}\ \sum_{j\not=i} \dfrac{1}{y_i-y_{j}},\eeqs
which is the discrete dislocations dynamics given in \eqref{DDD}.
\subsection{Viscosity sub and supersolutions}
By using  the comparison principle we show that the functions $u^\ep$ are bounded uniformly in $\ep$ (see Section \ref{mainthmsec}). In particular, 
$u^+:={\limsup_{\ep\rightarrow0}}^*u^\epsilon$ and 
$u^-:={\liminf_{\ep\rightarrow0}}_*u^\epsilon$ are
 everywhere finite. 
To formally prove the convergence result following the idea of Section  \ref{heuristicsec}, we  show that 
$u^+$ and $u^-$ are respectively viscosity sub and supersolution of \eqref{ubareq}. As in the perturbed test function method by Evans \cite{e1} in homogenization problems, we will proceed by contradiction. 
\subsection{Comparison with the solution of \eqref{ubareq}}


We prove that $u^+$ and $u^-$ are respectively viscosity sub and supersolution of \eqref{ubareq}, when testing with functions whose derivative in $x$  is different than 0.
This is not enough to conclude that by the comparison principle $u^+\leq u^-$. 
Thus, we  consider the  approximation $F_\ep(x)$ of the initial datum $u_0$ provided by \eqref{vappintr}. 
Since $\phi'\in L^p(\R)$ for all $p\in[1,\infty]$  and $\phi'>0$ (see Lemma \ref{phiinfinitylem}), for fixed 
$\ep,\,\delta>0$, the derivative of $F_\ep(x)$ belongs to $L^p(\R)$ for all $p\in[1,\infty]$ and is strictly positive.
By the results of  \cite{cc}  about equation   \eqref{transporteq} (see  Theorem \ref{globalesistthmderieq} in Section \ref{prelimsec}), 
we can construct a solution  $w^\ep(t,x)$ of \eqref{ubareq} such that $ w^\ep$  is smooth, $\partial_xw^\ep>0$,  $w^\ep(0,x)\simeq u_0(x)$
and $w^\ep\simeq \us$, with $\us$ the viscosity solution of \eqref{ubareq}.
We then show that 
\beq\label{u-winfintro}\lim_{|x|\to+\infty}u^+(t,x)-w^\ep(t,x)\simeq 0,\eeq moreover, 
\beq\label{u-winfintrobis} u^+(0,x)-w^\ep(0,x)\simeq 0.\eeq
We finally prove that $u^+(t,x)- w^\ep(t,x)\leq o_1(\ep)$.
Indeed, if not, by \eqref{u-winfintro} and  \eqref{u-winfintrobis}, $u^+- w^\ep$ must attain a global  positive maximum at some point 
in $(0,+\infty)\times\R$. Then,  using $w^\ep$ (whose derivative in $x$ is strictly positive) as  test function for $u^+$ we get a contradiction. Passing to the limit as $\ep\to0$, this shows that $u^+\le \us$.
Similarly, one can  prove that $u^-\ge \us$. Since the reverse inequality $u^-\leq u^+$ always holds true, we conclude that $u^-=u^+= \us$. 

As a byproduct of  our proof we show that the viscosity solution $\us$ of \eqref{ubareq} satisfies, for all $t\geq0$,
\beqs\lim_{x\to -\infty}\us(t,x)= \inf_\R u_0\quad\text{and}\quad \lim_{x\to +\infty}\us(t,x)=\sup_\R u_0, \eeqs 
which  is equivalent to say that    the  mass of the non-negative function  $\partial_x \us(t,x)$ is conserved: for all $t\geq0$,
$$ \|\partial_x\us (t,\cdot)\|_{L^1(\R)}=\|\partial_x  u_0\|_{L^1(\R)}.$$

\section{Preliminary results}\label{prelimsec}
In this section we recall some general auxiliary results that will be used in the rest of the paper.

\subsection{Short and long range interaction}
We start by recalling a basic fact about the operator $\I$.
Given $v\in C^{1,1}(\R)$ and $r>0$ we can split $\I[v]$ into the short and long range interaction as follows,
$$\I[v](x)=\I^{1,r}[v](x)+\I^{2,r}[v](x),$$ where $\I^{1,r}[v](x),\,\I^{2,r}[v](x)$ are defined respectively by \eqref{i1v} and \eqref{i2v}.
The short range interaction can be rewritten as 
$$ \I^{1,r}[v](x)=\frac{1}{2\pi}\int_{|y|<r}\dfrac{v(x+y)+v(x-y)-2v(x)}{y^2}dy,$$ 
Therefore, 
$$| \I^{1,r}[v](x)|\leq \frac{r}{\pi}\|v\|_{C^{1,1}(\R)}.$$
The long range interaction can be bounded as follows
$$| \I^{2,r}[v](x)|\leq \frac{4}{r\pi}\|v\|_{\infty}.$$
\subsection{The functions $\phi$ and $\psi$}

 In what follows we denote by  $H(x)$ the Heaviside function. Let $\alpha:=W''(0)>0$. 
\begin{lem}
\label{phiinfinitylem}Assume that  \eqref{Wass} holds, then there exists a unique solution $\phi$ of \eqref{phi}. Furthermore $\phi\in C^{2,\beta}(\R)$ and  there exist constants $K_0,K_1 >0$ such that
\begin{equation}\label{phiinfinity}\left|\phi(x)-H(x)+\frac{1}{\al \pi
x}\right|\leq \frac{K_1}{x^2},\quad\text{for }|x|\geq 1,
\end{equation}and for any $x\in\R$
\begin{equation}\label{phi'infinity}0<\frac{K_0}{1+x^2}\leq
\phi'(x)\leq\frac{K_1}{1+x^2}.\end{equation}
\end{lem}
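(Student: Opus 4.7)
Existence and uniqueness of the monotone solution $\phi$ of \eqref{phi} (with $\phi(0)=1/2$) are established in \cite{csm}. The regularity $\phi\in C^{2,\beta}(\R)$ follows from a standard Schauder bootstrap for the half-Laplacian: $\phi\in L^\infty$ and $W'(\phi)\in L^\infty$ give $\phi\in C^{\gamma}$ for some $\gamma\in(0,1)$; iterating and using $W\in C^{2,\beta}$ eventually yields $\phi\in C^{2,\beta}$. The substance of the lemma is thus the two asymptotic estimates \eqref{phiinfinity} and \eqref{phi'infinity}, which I would prove by comparison with explicit barriers on an exterior region.

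For \eqref{phi'infinity}, differentiating \eqref{phi} gives the linearized equation
\beqs
\I[\phi'](x)=W''(\phi(x))\,\phi'(x)\qquad\text{in }\R,
\eeqs
valid pointwise by the $C^{2,\beta}$ regularity. Strict monotonicity of $\phi$ combined with the strong maximum principle for $\I$ yields $\phi'>0$. Since $\phi(x)\to 0,1$ as $x\to\mp\infty$, one has $W''(\phi(x))\to W''(0)=\alpha>0$, so outside a large interval $[-R,R]$ the zero-th order coefficient is bounded below by $\alpha/2$. I would then take $\Psi(x):=C(1+x^2)^{-1}$ as a barrier; using $\mathcal{F}[(1+x^2)^{-1}](\xi)=\pi e^{-|\xi|}$, or a direct split of the principal value, one checks $|\I[\Psi](x)|\leq C'(1+x^2)^{-1}$ pointwise. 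Choosing $C$ large enough that $\Psi\ge\phi'$ on $[-R,R]$, the function $\Psi-\phi'$ satisfies $\I[\Psi-\phi']-W''(\phi)(\Psi-\phi')\leq 0$ outside $[-R,R]$, is nonnegative on its boundary, and vanishes at infinity; the exterior comparison principle for the operator $v\mapsto\I[v]-W''(\phi(x))v$ then yields $\phi'\leq\Psi$, which is the upper bound. The lower bound is obtained symmetrically, by taking $C$ small enough that $\Psi\leq\phi'$ on $[-R,R]$ and reversing the roles.

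For \eqref{phiinfinity}, first compute directly $\I[H](x)=-1/(\pi x)$ for $x\neq 0$, by splitting the principal value. Define the corrector
\beqs
w(x):=\phi(x)-H(x)+\frac{\eta(x)}{\alpha\pi x},
\eeqs
where $\eta$ is a smooth cutoff equal to $1$ for $|x|\geq 2$ and vanishing near $0$. A first pass with the barriers from the previous paragraph shows $\phi-H=O(|x|^{-1})$ at infinity, so $w$ is bounded. Using $W(u+1)=W(u)$ and the Taylor expansion $W'(s)=\alpha s+O(s^2)$ near $0$, one gets $W'(\phi(x))=\alpha(\phi(x)-H(x))+O(x^{-2})$ as $|x|\to\infty$. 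Substituting into $\I[\phi]=W'(\phi)$ and estimating $\I[\eta(\cdot)/(\alpha\pi\,\cdot)](x)=O(x^{-2})$ (via integration by parts and the decay of the kernel) produces
\beqs
\I[w](x)-\alpha w(x)=R(x),\qquad |R(x)|\leq C/x^2\quad\text{on }\{|x|\geq 2\}.
\eeqs
Since $\alpha>0$, the operator $v\mapsto\I[v]-\alpha v$ enjoys a maximum principle for functions vanishing at infinity; comparison of $w$ with barriers $\pm K_1/x^2$ (suitably cut off near the origin) on a sufficiently large exterior region then gives $|w(x)|\leq K_1/x^2$, which is \eqref{phiinfinity}.

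The main technical obstacle is the nonlocality of $\I$: at any exterior point $x$ the operator samples the test function on the interior region, so barriers must be defined globally and their interior contribution carefully absorbed. The coercive zero-th order terms ($-W''(\phi)v$ and $-\alpha v$) arising in the two linearized equations provide exactly the room needed to absorb these nonlocal remainders, and this is where the nondegeneracy assumption $W''(0)>0$ from \eqref{Wass} is indispensable.
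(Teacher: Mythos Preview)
The paper does not actually prove this lemma: its ``proof'' consists entirely of citations to \cite{csm} (for existence, uniqueness, and \eqref{phi'infinity}) and to \cite{gonzalezmonneau} (for \eqref{phiinfinity}). Your proposal goes considerably further by sketching the barrier/comparison arguments that underlie those references, and the overall strategy you outline---linearize, build explicit profiles with the correct decay, and invoke an exterior maximum principle for the operator $v\mapsto\I[v]-W''(\phi)v$ (or $v\mapsto\I[v]-\alpha v$)---is indeed the one used there.

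One technical point worth flagging: the specific barrier $\Psi(x)=C/(1+x^2)$ does not automatically yield the required sign of $\I[\Psi]-W''(\phi)\Psi$ for every $\alpha=W''(0)>0$. A direct computation (via $\mathcal{F}[(1+x^2)^{-1}]=\pi e^{-|\xi|}$) gives $\I[1/(1+x^2)]=(x^2-1)/(1+x^2)^2$, so for large $|x|$ one finds $\I[\Psi]-\alpha\Psi\sim C(1-\alpha)/x^2$, whose sign depends on whether $\alpha\lessgtr 1$. The fix is simply to rescale: with $\Psi_\lambda(x)=C\lambda^2/(\lambda^2+x^2)$ one gets $\I[\Psi_\lambda]-\alpha\Psi_\lambda\sim C\lambda(1-\alpha\lambda)/x^2$, and choosing $\lambda>1/\alpha$ (resp.\ $\lambda<1/\alpha$) makes $\Psi_\lambda$ a strict super- (resp.\ sub-) solution on an exterior region. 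With this adjustment your argument for \eqref{phi'infinity} goes through; the treatment of \eqref{phiinfinity} via the corrector $w$ is sound, and your closing remark about absorbing the interior nonlocal contributions through the coercive zero-th order term is exactly the mechanism used in \cite{gonzalezmonneau}.
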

\begin{proof} The existence of a unique solution of \eqref{phi} and estimate \eqref{phi'infinity} are proven in \cite{csm}. Estimate \eqref{phiinfinitylem} is proven in \cite{gonzalezmonneau}.  
\end{proof}
Let $c_0$ be defined as in \eqref{c0}. 
Let us introduce the function  $\psi$ to be the solution of
\begin{equation}\label{psi}
\begin{cases}\I[\psi]=W''(\phi)\psi+\frac{L}{\alpha}(W''(\phi)-W''(0))+c_0L\phi'&\text{in}\quad \R\\
\lim_{x\rightarrow{+\atop -}\infty}\psi(x)=0.
\end{cases}
\end{equation} 
For later purposes, we recall the following decay estimate
on the solution of~\eqref{psi}:
\begin{lem}
\label{psiinfinitylem}
Assume that  \eqref{Wass} holds, then there exists a unique solution $\psi$ to \eqref{psi}. Furthermore 
$\psi\in C^{1,\beta}(\R)$ 
and  for any $L\in\R$  there exist constants $K_2$
and $K_3$, with $K_3>0$, depending on $L$ such that
\begin{equation}\label{psiinfinity}\left|\psi(x)-\frac{K_2}{
x}\right|\leq\frac{K_3}{x^2},\quad\text{for }|x|\geq 1,
\end{equation}and for any $x\in\R$
\begin{equation}\label{psi'infinity}-\frac{K_3}{1+x^2}\leq
\psi'(x)\leq \frac{K_3}{1+x^2}.
\end{equation}
\end{lem}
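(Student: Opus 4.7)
My plan is to treat \eqref{psi} as the linear equation $\mathcal{L}\psi = f$ with
\begin{equation*}
\mathcal{L}\psi := \I[\psi] - W''(\phi)\psi, \qquad f := \tfrac{L}{\alpha}(W''(\phi)-W''(0)) + c_0 L\phi',
\end{equation*}
establishing existence through Fredholm theory and the decay through a bootstrap with the Green's function of $(-\Delta)^{1/2}+\alpha$. The guiding observation is that the choice \eqref{c0} of $c_0$ is tailored precisely so that \eqref{psi} is solvable.

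I first verify the Fredholm compatibility condition. Differentiating the phase equation $\I[\phi]=W'(\phi)$ in $x$ puts $\phi'$ in $\ker\mathcal{L}$, so solvability demands $\int_\R f\phi'\,dx = 0$. Now $\int_\R W''(\phi)\phi'\,dx = [W'(\phi)]_{-\infty}^{+\infty} = W'(1)-W'(0) = 0$ by \eqref{Wass}, so $\int_\R(W''(\phi)-W''(0))\phi'\,dx = -W''(0)[\phi]_{-\infty}^{+\infty} = -\alpha$, and the compatibility collapses to $-L+c_0L\int_\R(\phi')^2 = 0$, which holds by \eqref{c0}. Existence of $\psi$, together with the regularity $\psi\in C^{1,\beta}$ (via elliptic bootstrap for $\I$), then follows from a Fredholm argument in the spirit of Gonz\'alez--Monneau \cite{gonzalezmonneau}, exploiting the favorable sign of the background $W''(\phi)\to\alpha>0$ at infinity. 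Uniqueness is meant in the class of $\psi$ vanishing at $\pm\infty$ with the prescribed leading $K_2/x$ behavior: any element of $\ker\mathcal{L}$ is a multiple of $\phi'=O(1/x^2)$, too fast to affect the leading term.

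For \eqref{psiinfinity}, I use \eqref{phiinfinity} to expand
\begin{equation*}
W''(\phi(x))-W''(0) = W'''(1)(\phi(x)-1)+O((\phi-1)^2) = -\tfrac{W'''(1)}{\alpha\pi x}+O(1/x^2) \quad\text{as } x\to+\infty,
\end{equation*}
and symmetrically at $-\infty$, so that $f(x) = A_\pm/x + O(1/x^2)$ after absorbing $c_0L\phi'=O(1/x^2)$. Rewriting \eqref{psi} as $((-\Delta)^{1/2}+\alpha)\psi = -(W''(\phi)-\alpha)\psi - f$ and convolving with the one-dimensional Green's function $\mathcal{G}_\alpha$ of $(-\Delta)^{1/2}+\alpha$ (which decays like $1/(1+x^2)$), a bootstrap gives successively $\psi\to 0$, then $\psi=O(1/|x|)$, and finally the exact constant $K_2$ by matching the leading $\alpha(K_2/x)$ against $-A_\pm/x$. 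An equivalent route is to build explicit barriers $\pm(K_2/x + M/x^2)$ and apply the fractional maximum principle on the exterior region where $W''(\phi)\ge \alpha/2$. For \eqref{psi'infinity} I differentiate \eqref{psi}, obtaining
\begin{equation*}
\I[\psi'] = W''(\phi)\psi' + W'''(\phi)\phi'\psi + \tfrac{L}{\alpha}W'''(\phi)\phi' + c_0 L\phi'',
\end{equation*}
whose right-hand side is $O(1/(1+x^2))$ by \eqref{phi'infinity} and the $O(1/|x|)$ bound on $\psi$; one more convolution with $\mathcal{G}_\alpha$ yields the two-sided estimate.

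The main obstacle I anticipate is extracting the \emph{sharp} leading coefficient $K_2$ in \eqref{psiinfinity}, rather than a mere $O(1/|x|)$ bound: because $1/x$ is not in $L^2(\R)$, evaluating $\I[1/x]$ at infinity relies on delicate principal-value cancellations, and one must carefully track how the nonlocal tail couples the exterior behavior of $\psi$ with the compact region where $W''(\phi)$ deviates from $\alpha$. This is precisely the step where the fractional nature of $\I$ makes the analysis more delicate than its local Allen--Cahn counterpart, and where I expect to need either a carefully calibrated barrier or a precise manipulation of the Green's-function representation, as in \cite{gonzalezmonneau}.
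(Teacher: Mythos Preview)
The paper does not actually prove this lemma: its proof consists entirely of citations, attributing existence and uniqueness to Gonz\'alez--Monneau \cite{gonzalezmonneau} and the decay estimates \eqref{psiinfinity}--\eqref{psi'infinity} to Monneau--Patrizi \cite{mp2}. Your proposal is therefore not so much an alternative as an honest reconstruction of what lies behind those citations, and the outline you give (Fredholm alternative with $\phi'$ spanning $\ker\mathcal L$, the solvability computation showing the definition \eqref{c0} of $c_0$ is exactly the compatibility condition, and then decay via the resolvent of $(-\Delta)^{1/2}+\alpha$ or barriers) is indeed the strategy of those references.

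One point deserves care. Your treatment of uniqueness is not quite a proof of what is stated: since $\phi'\in\ker\mathcal L$ and $\phi'\to 0$ at $\pm\infty$, the problem \eqref{psi} as written has a one-parameter family of solutions $\psi+c\phi'$, all vanishing at infinity. Saying that $\phi'=O(1/x^2)$ is ``too fast to affect the leading term'' only shows that $K_2$ is well defined, not that $\psi$ is. In \cite{gonzalezmonneau} uniqueness is obtained by fixing a normalization (orthogonality to $\phi'$, or a pointwise condition such as $\psi(0)=0$); you should either impose such a condition explicitly or, more pragmatically, observe that for the applications in this paper any choice of $\psi$ in this one-dimensional affine family works, since the estimates \eqref{psiinfinity}--\eqref{psi'infinity} are unaffected by adding a multiple of $\phi'$.

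A minor stylistic remark: in the equation you obtain by differentiating \eqref{psi}, the term $W''(\phi)\psi'$ still contains the unknown $\psi'$, so it should be moved to the left before asserting that ``the right-hand side is $O(1/(1+x^2))$''. What you mean is that the forcing in the linear equation $\mathcal L\psi'=g$ satisfies $g=O(1/(1+x^2))$, after which the same Green's-function or barrier argument applies.
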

\begin{proof}
The existence of a unique solution of  \eqref{psi} is proven in  \cite{gonzalezmonneau}. Estimates \eqref{psiinfinity}  and \eqref{psi'infinity} are shown in \cite{mp2}.
\end{proof}
The results of Lemmas \ref{phiinfinitylem} and  \ref{psiinfinitylem} have been generalized  in \cite{cs, dpv,dfv,psv,pv} to the case 
when the fractional operator is $-(-\Delta)^s$ for any $s\in(0,1)$.

\subsection{Definition of viscosity solution}\label{viscositysec}
We first recall the definition of viscosity solution for a general
first order non-local equation 
\beq\label{generalpbbdd}
\partial_tu=F(t,x,u,\partial_xu,\I[u])\quad\text{in}\quad (0,+\infty)\times\Om
\eeq
where $\Om$ is an open subset of $\R$ and $F(t,x,u,p,L)$ is continuous and 
non-decreasing in $L$. 
\begin{de}\label{defviscositybdd}A function $u\in USC_b((0,+\infty)\times\R)$ (resp., $u\in LSC_b((0,+\infty)\times\R)$) is a
viscosity subsolution (resp., supersolution) of
\eqref{generalpbbdd}  if for any $(t_0,x_0)\in(0,+\infty)\times\Om$, 
and any test function $\varphi\in
C_b^2((0,+\infty)\times\R)$ such that $u-\varphi$ attains a global maximum
(resp., minimum) at the point $(t_0,x_0)$, 
then 
\beqs\begin{split}&\p_t\varphi(t_0,x_0)-F(t_0,x_0,u(t_0,x_0),\partial_x\varphi(t_0,x_0),\I[\varphi(t_0,\cdot)](x_0))\leq0\\&\text{(resp., }\geq 0).\end{split}\eeqs  
A function $u\in
C_b((0,+\infty)\times\R)$ is a viscosity solution of
\eqref{generalpb} if it is a viscosity sub and supersolution
of \eqref{generalpbbdd}.
\end{de}
\begin{rem}
It is classical that 
the maximum (resp., the minimum) in Definition \ref{defviscositybdd}   can be assumed to be strict  and that
$$\varphi(t_0,x_0)=u(t_0,x_0).$$
This will be used later.
\end{rem}
Next, let us  consider the initial value problem
\begin{equation}\label{generalpb}
\begin{cases}
\partial_tu=F(t,x,u,\partial_xu,\I[u])&\text{in}\quad (0,+\infty)\times\R\\
u(0,x)=u_0(x)& \text{on}\quad \R,
\end{cases}
\end{equation} where  $u_0$  is a continuous function.

\begin{de}\label{defviscosity}A function $u\in USC_b((0,+\infty)\times\R)$ (resp., $u\in LSC_b((0,+\infty)\times\R)$) is a
viscosity subsolution (resp., supersolution) of the initial value problem 
\eqref{generalpb} if $u(0,x)\leq (u_0)(x)$ (resp., $u(0,x)\geq
(u_0)(x)$) and $u$ is viscosity subsolution (resp., supersolution)  of the equation 
$$\partial_tu=F(t,x,u,\partial_xu,\I[u])\quad\text{in}\quad (0,+\infty)\times\R.$$
 A function $u\in
C_b((0,+\infty)\times\R)$ is a viscosity solution of
\eqref{generalpb} if it is a viscosity sub and supersolution
of \eqref{generalpb}.
\end{de}
It is a classical result that smooth solutions are also viscosity solutions.
\begin{prop}
If $u\in C^1((0,+\infty);C^{1,\beta}_{loc}(\Om)\cap L^\infty(\R))$ for some $0<\beta\le 1$,  and $u$ satisfies pointwise 
$$\partial_tu- F(t,x,u,\partial_xu,\I[u])\leq 0 \text{ (resp. $\geq0$)}\quad\text{in}\quad (0,+\infty)\times\Om,$$ then 
$u$ is a viscosity subsolution (resp., supersolution) of \eqref{generalpbbdd}.
\end{prop}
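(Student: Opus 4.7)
The plan is to prove the subsolution statement; the supersolution case follows by reversing all relevant inequalities. First I would fix $(t_0,x_0)\in(0,+\infty)\times\Om$ and a test function $\varphi\in C_b^2((0,+\infty)\times\R)$ such that $u-\varphi$ attains a global maximum at $(t_0,x_0)$. By the remark following Definition \ref{defviscositybdd}, I may assume this maximum is strict and that $\varphi(t_0,x_0)=u(t_0,x_0)$, so that $u\leq\varphi$ everywhere on $(0,+\infty)\times\R$ with equality at $(t_0,x_0)$. Since this is an interior local maximum of $u-\varphi$ and $u\in C^1((0,+\infty);C^{1,\beta}_{loc}(\Om))$, the classical Fermat argument immediately gives the matching of first derivatives $\partial_t u(t_0,x_0)=\partial_t\varphi(t_0,x_0)$ and $\partial_x u(t_0,x_0)=\partial_x\varphi(t_0,x_0)$.

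The heart of the argument is the nonlocal comparison $\I[u(t_0,\cdot)](x_0)\leq\I[\varphi(t_0,\cdot)](x_0)$. Setting $g(y):=\varphi(t_0,y)-u(t_0,y)$, I have $g\geq 0$ on $\R$ and $g(x_0)=0$. Writing, for any fixed $r>0$,
$$
\I[g](x_0)=\frac{1}{2\pi}\int_{|z|<r}\frac{g(x_0+z)+g(x_0-z)}{z^2}\,dz+\frac{1}{\pi}\int_{|y-x_0|>r}\frac{g(y)}{(y-x_0)^2}\,dy,
$$
both integrands are pointwise non-negative, hence $\I[g](x_0)\geq 0$, and by linearity $\I[\varphi(t_0,\cdot)](x_0)\geq\I[u(t_0,\cdot)](x_0)$. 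Combining the pointwise inequality satisfied by $u$ at $(t_0,x_0)$, the matching of derivatives, and the monotonicity of $F$ in its last argument then yields
$$
\partial_t\varphi(t_0,x_0)=\partial_t u(t_0,x_0)\leq F\bigl(t_0,x_0,u(t_0,x_0),\partial_x u(t_0,x_0),\I[u(t_0,\cdot)](x_0)\bigr)\leq F\bigl(t_0,x_0,\varphi(t_0,x_0),\partial_x\varphi(t_0,x_0),\I[\varphi(t_0,\cdot)](x_0)\bigr),
$$
which is exactly the required subsolution inequality.

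The one subtle point to verify is that the linear split $\I[\varphi]-\I[u]=\I[\varphi-u]$ used above makes sense at $x_0$, i.e.\ that each of the two nonlocal integrals is separately well defined. The tail $\I^{2,r}$ is controlled by the $L^\infty(\R)$ norms of $u(t_0,\cdot)$ and $\varphi(t_0,\cdot)$ via the bound recalled in Section \ref{prelimsec}. For the short-range part, since $x_0$ lies in the interior of $\Om$, the assumption $u(t_0,\cdot)\in C^{1,\beta}_{loc}(\Om)$ produces an integrand of order $|z|^{\beta-1}$ in the symmetric representation above, hence integrable near $z=0$; and $\varphi\in C_b^2$ is handled analogously (in fact more easily). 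The hard part is therefore not conceptual but a bookkeeping of regularity needed to pass from the PDE formulation to the comparison of two principal values; once that is cleared, the nonlocal inequality is simply the fractional analogue of the second-derivative sign test at an interior maximum.
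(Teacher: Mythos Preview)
The paper does not actually prove this proposition: it is stated immediately after the sentence ``It is a classical result that smooth solutions are also viscosity solutions'' and no proof is given. Your argument is the standard one and is correct: match first derivatives at the interior maximum, use the pointwise nonnegativity of $g=\varphi(t_0,\cdot)-u(t_0,\cdot)$ together with $g(x_0)=0$ to get $\I[g](x_0)\ge 0$, and then invoke the monotonicity of $F$ in its last argument. Your care in checking that $\I[u(t_0,\cdot)](x_0)$ and $\I[\varphi(t_0,\cdot)](x_0)$ are separately well defined (short range via the $C^{1,\beta}$ second-difference bound, long range via $L^\infty$) is exactly the ``bookkeeping'' that makes the classical elliptic argument go through in the nonlocal setting.
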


\subsection{Comparison principle and existence results}
In this subsection, we successively give com\-pa\-ri\-son
principles and existence results for \eqref{uepeq} and
\eqref{ubareq}. The following comparison theorem is shown in
\cite{jk} for more general parabolic integro-PDEs.
\begin{prop}[Comparison Principle for \eqref{uepeq}]\label{comparisonuep} Consider
 $u\in USC_b((0,+\infty)\times\R)$ subsolution
and $v\in LSC_b((0,+\infty)\times\R)$ supersolution of \eqref{uepeq},
then $u\leq v$ on $(0,+\infty)\times\R$.
\end{prop}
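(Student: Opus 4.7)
My plan is to prove this by the standard doubling-of-variables argument adapted to the nonlocal parabolic setting, following the framework of \cite{jk}. The lower-order term $\frac{1}{\delta}W'(u/\ep)$ is Lipschitz in $u$ (with constant $\|W''\|_\infty/(\delta\ep)$) but not monotone, so I would first perform the exponential change of variable $\tilde u := e^{-\lambda t}u$, $\tilde v := e^{-\lambda t}v$ with $\lambda$ large (of order $\|W''\|_\infty/(\delta^2\ep)$) to turn the resulting source into one which is monotone non-increasing in the unknown. It suffices to prove $\tilde u \le \tilde v$ on $(0,T]\times\R$ for every $T>0$.

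Suppose by contradiction $M := \sup_{(0,T)\times\R}(\tilde u-\tilde v) > 0$. For small $\eta,\mu,\alpha > 0$, consider the doubled auxiliary
\[\Phi(t,s,x,y) := \tilde u(t,x) - \tilde v(s,y) - \frac{(x-y)^2}{2\eta} - \frac{(t-s)^2}{2\eta} - \frac{\mu}{T-t} - \alpha\bigl(\omega(x)+\omega(y)\bigr),\]
where $\omega$ is a smooth positive weight growing mildly at infinity, chosen so that $\Phi$ attains its supremum at an interior point $(t_0,s_0,x_0,y_0)$. Classical Ishii estimates give $(x_0-y_0)^2/\eta + (t_0-s_0)^2/\eta \to 0$ and $t_0,s_0$ bounded away from $0$ and $T$ as $\eta\to 0$.

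The delicate step is the treatment of $\I$. I would use the reinforced viscosity inequality valid in the nonlocal setting: since $\tilde u - \phi_1$ achieves a global maximum at $(t_0,x_0)$ (with $\phi_1$ the spatial test extracted from $\Phi$), the subsolution inequality can be written with $\I^{1,r}[\phi_1(t_0,\cdot)](x_0) + \I^{2,r}[\tilde u(t_0,\cdot)](x_0)$ in place of $\I[\phi_1(t_0,\cdot)](x_0)$, and analogously for $\tilde v$ at $(s_0,y_0)$. A direct computation shows $\I^{1,r}[(x-y_0)^2/(2\eta)](x_0) = r/(\pi\eta) = \I^{1,r}[(x_0-y)^2/(2\eta)](y_0)$, so these contributions cancel upon subtraction, while the $\omega$-contributions in the short range are $O(\alpha r)$. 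For the long-range difference, the pointwise inequality deduced from $\Phi(t_0,s_0,x_0+z,y_0+z)\le\Phi(t_0,s_0,x_0,y_0)$ combined with the boundedness $|\I^{2,r}[w]|\le 4\|w\|_\infty/(r\pi)$ supplies the needed control.

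Subtracting the subsolution and supersolution inequalities, the time penalty produces a strict positive lower bound $\mu\delta/(T-t_0)^2$, the $W'$ difference is nonpositive thanks to the preliminary exponential rescaling, and the remainder is absorbed. Sending successively $\eta\to0$, then $r\to0$ (keeping $\alpha r\to0$), then $\alpha\to0$, and finally $\mu\to0$ contradicts $\mu > 0$. The main obstacle I anticipate is the careful construction of the weight $\omega$, which must grow fast enough at infinity to localize the supremum of $\Phi$ yet slowly enough that its long-range contribution to $\I$ remains controllable uniformly as the various parameters are sent to zero; this technical balancing is the heart of the argument in \cite{jk}, whose framework covers \eqref{uepeq}.
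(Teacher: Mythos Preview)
The paper does not actually prove this proposition: it simply states that the result ``is shown in \cite{jk} for more general parabolic integro-PDEs'' and moves on. Your sketch is an outline of precisely the doubling-of-variables argument carried out in \cite{jk}, so in that sense you and the paper are aligned --- you are just unpacking what the paper takes as a black box. The exponential change of variables with $\lambda$ of order $\|W''\|_\infty/(\delta^2\ep)$ to compensate for the lack of monotonicity of $W'$, the split $\I = \I^{1,r}+\I^{2,r}$ with the short-range part evaluated on the test function and the long-range part on $u,v$ themselves, and the inequality obtained from translating both variables by the same $z$ to control $\I^{2,r}[\tilde u](x_0)-\I^{2,r}[\tilde v](y_0)$, are all standard ingredients of that framework.

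One technical caution: your localizing weight $\omega$ must be chosen so that $\I[\omega]$ (or at least $\I^{2,r}[\omega]$) is finite and uniformly bounded, since the long-range estimate you get from the shift is $\I^{2,r}[\tilde u](x_0)-\I^{2,r}[\tilde v](y_0)\le \alpha\bigl(\I^{2,r}[\omega](x_0)+\I^{2,r}[\omega](y_0)\bigr)$, not merely $O(\alpha\|w\|_\infty/r)$ as you wrote (the functions $\tilde u,\tilde v$ are bounded but $\omega$ is not). A sublinearly growing weight such as $\omega(x)=\log(1+x^2)$ or a smooth approximation of $|x|^\beta$ with $\beta<1$ works; this is exactly the ``balancing'' you flag at the end, and it is handled explicitly in \cite{jk}.
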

Following \cite{jk} it can  also be proven the comparison
principle for \eqref{uepeq} in bounded domains. Since we deal with a
non-local equation, we need to compare the sub and the
supersolution everywhere outside the domain.
\begin{prop}[Comparison Principle on bounded domains for
\eqref{uepeq}]\label{comparisonbounded} Let $\Om$ be a bounded
domain of $(0,+\infty)\times\R$ and let $u\in USC_b((0,+\infty)\times\R)$
and $v\in LSC_b((0,+\infty)\times\R)$ be respectively a sub and a
supersolution of $$\delta\p_{t}
u=\I[u(t,\cdot)]-\frac{1}{\delta}W'\left(\frac{u}{\epsilon}\right)\quad\text{in }\Om.
$$If
$u\leq v$ outside $\Om$, then $u\leq v$ in $\Om$.
\end{prop}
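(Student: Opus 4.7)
The strategy is the standard doubling-of-variables argument for parabolic integro-differential equations (Jakobsen--Karlsen \cite{jk}); the only adjustment is to exploit the hypothesis $u\le v$ on $\Om^c$ to force the doubled maximizers into the interior of $\Om$. Assume by contradiction that $M:=\sup_\Om(u-v)>0$. Because $\Om$ is bounded and $u\le v$ outside $\Om$, this supremum is attained at some $(\tilde t,\tilde x)\in\overline{\Om}$. Replace $u$ by $u-\eta t$ for small $\eta>0$ so that the subsolution viscosity inequality picks up an extra $+\delta\eta$; this decreases $M$ only negligibly. To handle the non-monotone Lipschitz dependence of $-\frac{1}{\delta}W'(u/\ep)$ on $u$, make the further substitution $u\mapsto e^{-\lam t}u$, $v\mapsto e^{-\lam t}v$ with $\lam>\mathrm{Lip}(W')/(\delta^2\ep)$, the standard device restoring monotonicity of the zero-order term.

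For small $\alpha,\gamma>0$, maximize
\beqs
\Phi_{\alpha,\gamma}(t,x,s,y)=u(t,x)-v(s,y)-\alpha^{-2}\bigl(|x-y|^2+(t-s)^2\bigr)-\gamma(|x|^2+|y|^2)
\eeqs
over $[0,T]\times\R\times[0,T]\times\R$, with $T$ larger than the time projection of $\Om$; the $\gamma$-penalty gives spatial coercivity so the maximum is attained at some $(\bar t,\bar x,\bar s,\bar y)$. A standard penalization lemma shows that $\alpha^{-2}(|\bar x-\bar y|^2+(\bar t-\bar s)^2)\to 0$ and the doubled points converge to $(\tilde t,\tilde x)$ in the joint limit $\gamma\to 0$, $\alpha\to 0$. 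Because $u\le v$ on $\Om^c$ and $M>0$, both $(\bar t,\bar x)$ and $(\bar s,\bar y)$ lie in the interior of $\Om$ for $\alpha,\gamma$ sufficiently small, so the viscosity inequalities apply. The relevant test function for $u$ at $(\bar t,\bar x)$ is $\varphi(t,x):=v(\bar s,\bar y)+\alpha^{-2}(|x-\bar y|^2+(t-\bar s)^2)+\gamma|x|^2$, with the symmetric $\tilde\varphi$ for $v$.

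Because $\varphi$ is unbounded, so $\I[\varphi]=+\infty$, but $u-\varphi$ attains a global maximum at $(\bar t,\bar x)$, one uses the split form of the subsolution viscosity inequality: for any $r>0$,
\beqs
\delta\,\partial_t\varphi(\bar t,\bar x)\le\I^{1,r}[\varphi(\bar t,\cdot)](\bar x)+\I^{2,r}[u(\bar t,\cdot)](\bar x)-\frac{1}{\delta}W'\!\bigl(u(\bar t,\bar x)/\ep\bigr),
\eeqs
with the reverse inequality for $v$. The short-range difference between the two test functions is $O(r(\alpha^{-2}+\gamma))$, since $\|\varphi\|_{C^{1,1}}=O(\alpha^{-2}+\gamma)$. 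The main obstacle, as always in nonlocal doubling arguments, is the long-range term $\I^{2,r}[u(\bar t,\cdot)](\bar x)-\I^{2,r}[v(\bar s,\cdot)](\bar y)$; here it is controlled by the shift trick. Since $(\bar t,\bar x,\bar s,\bar y)$ is a global maximizer of $\Phi_{\alpha,\gamma}$ and the quadratic $|x-y|^2$ penalty is invariant under the simultaneous translation $(x,y)\mapsto(x+z,y+z)$, for every $z\in\R$
\beqs
u(\bar t,\bar x+z)-u(\bar t,\bar x)\le v(\bar s,\bar y+z)-v(\bar s,\bar y)+2\gamma(\bar x+\bar y)z+2\gamma z^2.
\eeqs
Dividing by $\pi z^2$ and integrating over $r<|z|\le R$ (the odd $z$-term vanishes in principal value, the even $\gamma z^2$ term contributes $O(\gamma R)$), and using $\|u\|_\infty,\|v\|_\infty<\infty$ to bound the tail $|z|>R$ by $O(1/R)$, then optimizing in $R$, gives $\I^{2,r}[u(\bar t,\cdot)](\bar x)\le\I^{2,r}[v(\bar s,\cdot)](\bar y)+O(\sqrt\gamma)$. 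Subtracting the two viscosity inequalities, the time derivatives of $\varphi$ and $\tilde\varphi$ cancel, $+\delta\eta$ survives, the zero-order terms have the favorable sign by the $e^{-\lam t}$ substitution, and sending first $r\to 0$, then $\gamma\to 0$, finally $\alpha\to 0$ yields $\delta\eta\le 0$, the desired contradiction.
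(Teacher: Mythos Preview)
The paper does not give its own proof of this proposition; it simply states the result and attributes it to the Jakobsen--Karlsen framework \cite{jk}. Your sketch is a correct outline of exactly that standard doubling-of-variables argument for parabolic integro-PDEs (localization via $u\le v$ on $\Om^c$, the split $\I^{1,r}+\I^{2,r}$, the shift trick for the long-range term, and the exponential-weight device for the Lipschitz zero-order nonlinearity), so it matches the paper's intended approach.
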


\begin{prop}[Existence for \eqref{uepeq}]\label{existuep}For $\ep,\,\delta>0$ there exists
$u^{\ep}\in C_b([0,+\infty)\times\R)$ (unique) viscosity solution of
\eqref{uepeq}. Moreover, $u^\ep$ is non-decreasing in $x$.
\end{prop}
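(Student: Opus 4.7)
The plan is to combine Perron's method with the comparison principle of Proposition~\ref{comparisonuep}. Three ingredients are needed: smooth sub- and super-barriers that both match the initial datum $u_0$ at $t=0$; the nonlocal adaptation of Perron's construction for integro-differential equations; and the translation invariance of \eqref{uepeq} in $x$ to obtain monotonicity.

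First I would exhibit explicit barriers. Since $u_0\in C^{1,1}(\R)$ is bounded, the short/long range splitting recalled at the beginning of Section~\ref{prelimsec} gives $\|\I[u_0]\|_{L^\infty(\R)}<+\infty$; the periodicity and $C^{2,\beta}$-regularity of $W$ give $\|W'\|_{L^\infty(\R)}<+\infty$. Choosing
$$ M:=\frac{1}{\delta}\|\I[u_0]\|_{L^\infty(\R)}+\frac{1}{\delta^2}\|W'\|_{L^\infty(\R)}, $$
a direct computation shows that $u^\pm(t,x):=u_0(x)\pm Mt$ are respectively a smooth super- and sub-solution of \eqref{uepeq}, both equal to $u_0$ at $t=0$. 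In the spirit of Perron's method I then define
$$ u^\ep(t,x):=\sup\Big\{v(t,x):v\in USC_b((0,+\infty)\times\R)\text{ subsolution of }\eqref{uepeq}\text{ with }v\le u^+\Big\}. $$
Since $u^-$ lies in this family, the supremum is nonempty and trapped between the two barriers, hence bounded on every strip $(0,T)\times\R$. Standard arguments from the nonlocal viscosity theory (cf.\ \cite{jk}) show that the upper semicontinuous envelope $(u^\ep)^*$ is a subsolution of the PDE and the lower semicontinuous envelope $(u^\ep)_*$ is a supersolution, while the barrier sandwich forces $(u^\ep)^*(0,\cdot)=(u^\ep)_*(0,\cdot)=u_0$. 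Proposition~\ref{comparisonuep} then yields $(u^\ep)^*\le (u^\ep)_*$, which forces equality; thus $u^\ep$ is continuous and is a viscosity solution of \eqref{uepeq}. Uniqueness is obtained by applying comparison in both directions to any two bounded viscosity solutions with the same initial datum.

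For monotonicity in $x$, note that both $\I$ and the forcing term $W'(u/\ep)$ are invariant under translations of the spatial variable. Hence, for any $h>0$, $v(t,x):=u^\ep(t,x+h)$ is again a viscosity solution of \eqref{uepeq}, now with initial datum $u_0(\cdot+h)\ge u_0(\cdot)$. Proposition~\ref{comparisonuep} applied to $u^\ep$ and $v$ then gives $u^\ep(t,x+h)\ge u^\ep(t,x)$ for all $(t,x)$, i.e.\ $u^\ep$ is non-decreasing in $x$. The main obstacle in this program is the nonlocal extension of Perron's method, namely verifying that the supremum operation respects the integro-differential subsolution property and that a local ``bump'' argument can still produce a strict supersolution without spoiling the contribution of the nonlocal tail of $\I$. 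These adaptations are classical for parabolic integro-PDEs and are carried out in the framework of \cite{jk}, which covers our equation.
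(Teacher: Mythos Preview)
Your proof is correct and follows essentially the same route as the paper: linear-in-time barriers $u_0\pm Mt$ built from the $C^{1,1}$ bound on $\I[u_0]$ and the boundedness of $W'$, Perron's method with the comparison principle of Proposition~\ref{comparisonuep} (referring to \cite{jk} for the integro-PDE framework), and monotonicity in $x$ via translation invariance plus comparison. The paper's proof is terser but identical in structure; your choice of $M$ matches the paper's $C/\delta^2$ up to the explicit bound used for $\|\I[u_0]\|_\infty$.
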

\dim 
We can construct a
solution by Perron's method if we construct sub and supersolutions
of \eqref{uepeq} which are equal to $u_0(x)$ at $t= 0$. Since $u_0\in C^{1,1}(\R)$, the two functions
$u^{\pm}(t,x):=u_0(x){\pm} \frac{C}{\delta^2} t$ are respectively a super and a
subsolution of \eqref{uepeq}, if
$$C\geq \frac{4\delta}{\pi}\|u_0\|_{C^{1,1}(\R)}+\|W'\|_\infty.$$
Moreover $u^+(0,x)=u^-(0,x)=u_0(x)$.
Since $u_0$ is non-decreasing, the comparison principle implies that $u^\ep$ is non-decreasing in $x$. 
\finedim We next recall the
comparison and the existence results for \eqref{ubareq}, see e.g. \cite{imr}, Proposition 3.
\begin{prop}\label{existHeff}
If $u\in USC_b([0,+\infty)\times\R)$ and  $v\in LSC_b([0,+\infty)\times\R)$
are respectively a sub and a supersolution of  
\beq
\label{ubareqbis}\begin{cases}
\partial_t u=c_0|\partial_x u|\,\I [u]&\text{in }(0,+\infty)\times\R\\
u(0,\cdot)=u_0&\text{on }\R,
\end{cases}\eeq
then $u\leq v$ on $(0,+\infty)\times\R$. Moreover, under assumption \eqref{u^0ass},  there exists a
(unique) viscosity solution of \eqref{ubareqbis} which is non-decreasing in $x$ and thus is   viscosity solution of  \eqref{ubareq}.
\end{prop}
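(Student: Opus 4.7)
My plan is to establish three ingredients: a comparison principle for \eqref{ubareqbis}, existence of a continuous viscosity solution via Perron's method, and the observation that with $u_0$ non-decreasing the unique solution of \eqref{ubareqbis} is itself non-decreasing in $x$, which then allows me to replace $|\partial_x u|$ by $\partial_x u$ at test points and conclude that it solves \eqref{ubareq} as well.

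For comparison, I would follow the standard doubling-of-variables argument for nonlocal Hamilton--Jacobi-type equations from Jakobsen--Karlsen~\cite{jk}. Assuming by contradiction that $M:=\sup_{(0,T)\times\R}(u-v)>0$ for some $T>0$, I penalize
\[
\Phi(t,x,\tau,y)=u(t,x)-v(\tau,y)-\frac{|x-y|^{2}+(t-\tau)^{2}}{2\alpha}-\eta(x^{2}+y^{2})-\frac{\s}{T-t},
\]
and pick a maximum $(\ts,\xs,\tas,\ys)$. Standard estimates ensure $\ts,\tas\in(0,T)$ once $\s$ is small, keep $\xs,\ys$ in a compact set thanks to boundedness of $u$ and $v$, and bound $p_\alpha:=(\xs-\ys)/\alpha$ independently of $r$. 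Using the short/long range decomposition $\I=\I^{1,r}+\I^{2,r}$ from Section~\ref{prelimsec}, I would test $u$ at $(\ts,\xs)$ with the smooth part of the penalization on the inner ball $B_r(\xs)$ and with $u$ itself outside, and symmetrically for $v$. The $\I^{1,r}$ contribution from the penalization produces an error $O(r/\alpha+\eta r)$, while the $\I^{2,r}$ difference is controlled by $u-v\leq M+o_{\alpha,\eta}(1)$ outside a small neighborhood of $(\xs,\ys)$. Subtracting the two viscosity inequalities and using the Lipschitz dependence of $c_0|p|L$ on $L$ (with constant $c_0|p_\alpha|$ bounded uniformly in $r$) would yield $\s/T^{2}\leq o_r(1)+o_{\alpha,\eta}(1)$, the desired contradiction after sending $r\to 0$, then $\eta\to 0$, then $\alpha\to 0$.

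For existence I apply Perron's method. Since $u_0\in C^{1,1}(\R)$ is bounded, so is $\I[u_0]$, and the affine-in-time functions
\[
u^{\pm}(t,x):=u_0(x)\pm Ct,\qquad C:=c_0\|u_0'\|_\infty\|\I[u_0]\|_\infty,
\]
are respectively a supersolution and a subsolution of \eqref{ubareqbis} with $u^{\pm}(0,\cdot)=u_0$. Together with the comparison principle this gives a unique viscosity solution $\us$. For monotonicity in $x$, I observe that for any $h>0$ the translate $\us(t,x+h)$ is again a viscosity solution of \eqref{ubareqbis} (the equation is translation invariant in $x$) with initial datum $u_0(\cdot+h)\geq u_0$ by \eqref{u^0ass}; comparison then yields $\us(t,x+h)\geq\us(t,x)$ for every $(t,x)$, so $\us$ is non-decreasing in $x$. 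Finally, if $\varphi\in C_b^{2}$ touches $\us$ from above (resp.\ below) at some $(t_0,x_0)$, then monotonicity of $\us$ forces $\partial_x\varphi(t_0,x_0)\geq 0$, hence $|\partial_x\varphi|=\partial_x\varphi$ there and the viscosity inequalities for \eqref{ubareqbis} and \eqref{ubareq} coincide at that point: $\us$ is also a viscosity solution of \eqref{ubareq}.

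The main obstacle is the nonlocal doubling-of-variables step in the comparison proof: one has to split $\I$ at a radius $r$ compatible with the regularity of the quadratic test function on the inner ball, control the long range difference via the penalization and boundedness of $u-v$, and let $r,\eta,\alpha,\s$ vanish in the correct order so that the Lipschitz-in-$L$ term does not blow up. This is precisely the technical content invoked via Proposition~3 of \cite{imr}; once it is granted, existence, uniqueness, monotonicity, and the identification with \eqref{ubareq} are routine.
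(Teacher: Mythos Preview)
The paper does not prove this proposition at all: it is stated as a recalled result with the reference ``see e.g.\ \cite{imr}, Proposition~3,'' and no argument is given. Your outline is therefore not competing with a paper proof but rather sketching the content of the cited reference, and you acknowledge this explicitly in your last paragraph. The ingredients you list---comparison by doubling of variables with the $\I^{1,r}/\I^{2,r}$ split, Perron's method with the affine barriers $u_0\pm Ct$, monotonicity by the translation-and-comparison trick, and the observation that $\partial_x\varphi\geq 0$ at any test point for a non-decreasing function---are exactly the standard route and are correct as stated.

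One small point worth tightening in the comparison sketch: you say $p_\alpha$ is ``bounded independently of $r$,'' which is true, but for the argument to close you also need it bounded independently of $\alpha$ (or at least to control $|p_\alpha|$ times the nonlocal error as $\alpha\to 0$). In the usual argument this comes from the Lipschitz regularity of $u_0$ (or, more generally, a modulus of continuity for the initial datum propagated by comparison), which forces $|\xs-\ys|/\alpha$ to stay bounded; since you have $u_0\in C^{1,1}$ here this is available, but it is the one place where the order of limits could otherwise bite.
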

\subsection{Existence of global solutions of equation \eqref{transporteq}}
\begin{thm}[\cite{cc}, Theorem 2.1]\label{globalesistthmderieq}
Let $f_0\in L^2(\R)\cap C^{\beta}(\R)$, for some $0<\beta\le1$ and $f_0>0$ in $\R$ (vanishing at infinity).  
Then, there exists a global solution $v$ of equation \eqref{transporteq} in $C^1((0;+\infty);analytic)$ with $f(0,x)=f_0(x)$. Moreover,
$f$ is vanishing at infinity and $H(f(t,\cdot))\in L^\infty(\R)$ for all $t\geq 0$.
If  $f_0\in L^2(\R)\cap C^{1,\beta}(\R)$, the solution is unique.
\end{thm}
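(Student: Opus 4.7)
The plan is to combine an ODE-along-characteristics a priori analysis with an analytic-continuation construction. Writing the equation in non-conservative form
$f_t - c_0 \mathcal{H}[f]\, f_x = c_0 f \I[f]$,
along characteristics $\dot X(t) = -c_0 \mathcal{H}[f](t, X(t))$ one has the pointwise ODE $\tfrac{d}{dt} f(t, X(t)) = c_0 f \I[f]$. Standard maximum principle reasoning (using $\I[f]\leq 0$ at a spatial maximum and $\I[f]\geq 0$ at a spatial minimum, combined with $f \geq 0$) yields
\beq\label{pltwoplan}
0 \leq f(t,x) \leq \|f_0\|_{L^\infty}, \qquad t \geq 0,\ x \in \R,
\eeq
together with conservation of mass $\|f(t,\cdot)\|_{L^1} = \|f_0\|_{L^1}$ from the conservative form. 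Along characteristics the ODE integrates to $f(t,X(t)) = f_0(y_0)\exp\!\bigl(c_0\int_0^t \I[f](s,X(s))\,ds\bigr)$, so as long as $\I[f]$ stays bounded, strict positivity propagates.

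For \emph{analytic} initial data, local existence follows from an abstract Cauchy--Kovalevskaya argument of Nishida--Nirenberg type on the scale
$X_\sigma = \{F \text{ holomorphic in } \{|\mathrm{Im}\,z|<\sigma\},\ \|F\|_{X_\sigma} < \infty\}$
with a natural norm on horizontal lines. The nonlinearity $F\mapsto c_0\partial_x(F\mathcal{H}[F])$ loses one $x$-derivative but, via Cauchy's inequality, gains a factor $1/(\sigma-\sigma')$ mapping $X_\sigma\to X_{\sigma'}$, giving a local solution on a strip of width $\sigma(t)>0$. For $C^\beta$ initial data, mollify $f_0^\eta = f_0\ast\rho_\eta$ (analytic in a thin strip), solve for $f^\eta$, and pass to the limit using the uniform bounds \eqref{pltwoplan} plus Aubin--Lions compactness. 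Instantaneous analyticity for $t>0$ is a smoothing statement tied to the positivity of $f$.

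The main obstacle is global-in-time continuation: preventing the analyticity strip $\sigma(t)$ from collapsing in finite time. Here positivity is essential. Extending $f$ to a holomorphic $F(t,z)$ and tracking complex characteristics $\dot Z = -c_0\mathcal{H}[f](t,Z)$, one must show that the complex zeros of $F$ (equivalently, the singularities of the auxiliary quantity $\log F$) stay uniformly bounded away from the real axis by a distance controlled by $\|f_0\|_{L^\infty}$ and the pointwise lower bound on $f$. This distance controls $\sigma(t)$ and yields a differential inequality of the form $\dot\sigma(t) \geq -C(\|f_0\|_{L^\infty})$, keeping $\sigma(t)$ strictly positive for all finite times. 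Tying the complex-analytic radius to the positivity estimate \eqref{pltwoplan} is the heart of Castro--C\`ordoba's argument and the main technical difficulty; if $f_0$ has a zero on $\R$, this mechanism fails and the zero of $F$ crosses into the real axis in finite time, corresponding to blow-up.

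For uniqueness under the stronger $f_0 \in C^{1,\beta}$ assumption, take two solutions $f_1, f_2$ with the same initial data and set $g = f_1 - f_2$. Then
$g_t = c_0\partial_x\bigl(\tfrac{1}{2}(f_1+f_2)\mathcal{H}[g] + \tfrac{1}{2}g\,\mathcal{H}[f_1+f_2]\bigr)$;
multiplying by $g$, integrating by parts, and using the $L^2$-boundedness of $\mathcal{H}$ together with the $C^{1,\beta}$ bounds on $f_1, f_2$ yields $\tfrac{d}{dt}\|g(t,\cdot)\|_{L^2}^2 \leq C\|g(t,\cdot)\|_{L^2}^2$; Gr\"onwall and $g(0,\cdot)=0$ conclude.
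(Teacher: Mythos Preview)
The paper does not prove this theorem: it is quoted verbatim from Castro--C\`ordoba \cite{cc} as an external result (their Theorem~2.1), with no argument given. So there is no ``paper's own proof'' to compare your proposal against.

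That said, your outline does track the broad shape of the Castro--C\`ordoba strategy (complexification, abstract Cauchy--Kovalevskaya on a scale of holomorphic spaces, positivity keeping complex zeros away from the real axis). But as written it has a genuine gap in the global-continuation step. You claim a differential inequality of the form $\dot\sigma(t)\ge -C(\|f_0\|_{L^\infty})$ and then assert this keeps $\sigma(t)$ strictly positive for all finite times. It does not: integrating gives only $\sigma(t)\ge\sigma(0)-Ct$, which hits zero at $t=\sigma(0)/C$ and yields at best a local-in-time result. The actual argument in \cite{cc} is more delicate: one must show that the distance from the complex zeros of $F$ to the real axis is bounded below \emph{uniformly in time}, not merely that it decreases at a controlled rate. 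This uses the specific structure of the transport of zeros under the complexified dynamics and the strict positivity on $\R$ in a sharper way than a linear differential inequality on $\sigma$ can capture.

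A second, smaller gap: for merely $C^\beta$ data you mollify, solve, and pass to the limit, then assert ``instantaneous analyticity for $t>0$ is a smoothing statement tied to the positivity of $f$.'' This is exactly the nontrivial content of the theorem and cannot be left as a one-line remark; the smoothing mechanism is again the complex-analytic continuation argument, and it needs to be run for the limit object, not just the mollified approximants.
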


\section{A discrete approximation of the operator $\I$}\label{discreteIsec}
Let $v\in  C^{0,1}(\R)$ be  non-decreasing and non-constant.
For $0<\ep<1$, define the points $x_i$ as follows
\beq\label{xi}x_i:=\inf\{x\in\R\,|\, v( x)=\ep i\}\qquad i=M_\ep,\ldots,N_\ep,\eeq
where 
\beq\label{MepNep}M_\ep:= \left\lceil\frac{\inf_\R v+\ep}{\ep}\right\rceil\quad\text{ and }\quad N_\ep:=\left \lfloor\frac{\sup_\R v-\ep}{\ep} \right \rfloor.\eeq
Since $v$ is continuous,
$$v(x_i)=\ep i,$$
and since  $v$ is non-decreasing, 
$$x_{i}<x_{i+1}\quad\text{for all }i=M_\ep,\ldots, N_\ep-1.$$
Notice that if $v$ is strictly increasing then 
$$ x_i=v^{-1}(\ep i).$$
In what follows given  $\xs\in\R$, we denote by  $x_{i_0}$  the closest point among the $ x_i$'s to $\xs$.
\begin{lem}\label{lemdistxi}Let $v\in C^{0,1}(\R)$ be non-decreasing and non-constant with $\|v_x\|_\infty\leq L$, and 
let $x_i$ be defined as in \eqref{xi}.
Then, 
\beq\label{proplippart1}x_{i+1}-x_i\geq \ep L^{-1}\quad\text{ for all }i=M_\ep,\ldots, N_\ep-1.\eeq
Moreover,  there exists $c>0$ independent of $v$ such that  for any $\xs\in \R$
\beq\label{i/k^2sum}\sum_{ i=M_\ep\atop i\neq i_0}^{N_\ep}\frac{\ep^2}{(x_i-\xs)^2}\leq cL^2.\eeq
If in addition $v_x\ge  a>0$ on an interval $I$, then for all $x_{i+1},\,x_i\in I$, we have
\beq\label{proplippart2}x_{i+1}-x_i\leq \ep a^{-1}.\eeq
\end{lem}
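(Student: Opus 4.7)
The plan is to handle the three inequalities in order, since \eqref{proplippart1} does most of the work for \eqref{i/k^2sum}, and \eqref{proplippart2} is an independent and symmetric variant of \eqref{proplippart1}.

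\textbf{Step 1 (Lipschitz spacing bound \eqref{proplippart1}).} By the definition of the $x_i$ in \eqref{xi}, for consecutive indices we have $v(x_{i+1})-v(x_i)=\ep$. Using the Lipschitz bound $\|v_x\|_\infty\le L$,
\[\ep=v(x_{i+1})-v(x_i)\le L\,(x_{i+1}-x_i),\]
which gives the desired $x_{i+1}-x_i\ge \ep L^{-1}$.

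\textbf{Step 2 (Upper bound \eqref{proplippart2}).} On an interval $I$ where $v_x\ge a>0$, $v$ is absolutely continuous there, so for $x_i,x_{i+1}\in I$,
\[\ep=v(x_{i+1})-v(x_i)=\int_{x_i}^{x_{i+1}}v_x(s)\,ds\ge a\,(x_{i+1}-x_i),\]
yielding $x_{i+1}-x_i\le \ep a^{-1}$.

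\textbf{Step 3 (Quadratic sum \eqref{i/k^2sum}).} Since the points $x_i$ are ordered and separated by at least $\ep/L$ by Step 1, an induction on $|i-i_0|$ gives
\[|x_i-x_{i_0}|\ge \frac{|i-i_0|\,\ep}{L}\qquad\text{for all }i\ne i_0.\]
The key idea is then a triangle-inequality trick that uses the defining property of $x_{i_0}$ as the nearest of the $x_j$'s to $\xs$. Namely, for any $i\ne i_0$ one has $|x_i-\xs|\ge |x_{i_0}-\xs|$ (by minimality) and $|x_i-\xs|\ge |x_i-x_{i_0}|-|x_{i_0}-\xs|$ (by triangle). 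Adding these two inequalities and dividing by $2$ yields
\[|x_i-\xs|\ge \tfrac{1}{2}|x_i-x_{i_0}|\ge \frac{|i-i_0|\,\ep}{2L}.\]
Plugging this into the sum and bounding by the tail of the Basel series,
\[\sum_{i=M_\ep,\,i\ne i_0}^{N_\ep}\frac{\ep^2}{(x_i-\xs)^2}\le \sum_{k\ge 1}\frac{8L^2}{k^2}=\frac{4\pi^2}{3}L^2,\]
so \eqref{i/k^2sum} holds with, e.g., $c=4\pi^2/3$, which is independent of $v$ as claimed.

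\textbf{Expected difficulty.} Steps 1 and 2 are one-line consequences of the Lipschitz hypothesis and the fundamental theorem of calculus, so no real obstacle there. The only delicate point is the handling of the closest-point index $i_0$ in Step 3: a naive bound $|x_i-\xs|\ge (|i-i_0|-1)\ep/L$ is too weak (it degenerates when $i=i_0\pm 1$), and one must avoid losing the nearest neighbor. The triangle-inequality trick bypasses that loss cleanly by exploiting the defining property of $x_{i_0}$, and it makes the constant $c$ explicitly universal, which is what the statement requires.
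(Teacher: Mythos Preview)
Your proof is correct and follows essentially the same approach as the paper's: Steps 1 and 2 match verbatim, and in Step 3 both arguments reduce to the key inequality $|x_i-\xs|\ge |i-i_0|\,\ep/(2L)$ and then sum the Basel tail. The only difference is that the paper simply asserts this distance bound as a consequence of \eqref{proplippart1} and the closest-point property, whereas you spell out the triangle-inequality trick that justifies the factor $1/2$; your additional detail is welcome but not a different route.
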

\begin{proof}
We have 
$$\ep=v(x_{i+1})-v(x_i)\leq L(x_{i+1}-x_i),$$ from which \eqref{proplippart1} follows.

Next, by \eqref{proplippart1}, if $x_{i_0}$ is the closest point to $\xs$, then
$$|x_i-\xs|\ge \frac{|i-i_0|\ep}{2L}\quad\text{for all }i.$$ 
Therefore, 
\beqs \sum_{ i=M_\ep\atop i\neq i_0}^{N_\ep}\frac{\ep^2}{(x_i-\xs)^2}\leq4L^2\sum_{ i=M_\ep\atop i\neq i_0}^{N_\ep}\frac{1}{(i-i_0)^2}
\le 8L^2 \sum_{i=1}^\infty\frac{1}{i^2}=cL^2,\eeqs
which proves \eqref{i/k^2sum}.

Finally,   if  $v_x\ge a$, then
$$\ep=v(x_{i+1})-v(x_i)\geq a (x_{i+1}-x_i)$$ from which  \eqref{proplippart2} follows.
\end{proof}

\begin{lem}[Short range interaction]\label{approxIshortlem}Let $v\in C^{1,1}(\R)$ be non-decreasing and non-constant and 
$x_i$ defined as in \eqref{xi}. 
Let  $r=r_\ep$ be such that $r\to0 $  and $\ep/r\to0$ as $\ep\to0$. Let $\rho\ge r$ and  and $\xs\in (x_{M_\ep}+\rho,x_{N_\ep}-\rho)$, then
\beq\label{I1rhoapplemeq} \frac{1}{\pi}\sum_{i\neq i_0\atop r\le |x_i-\xs|\leq\rho}\frac{\ep}{x_i-\xs}=\I^{1,\rho}[v](\xs)+\frac{1}{\pi}\frac{v(\xs+\rho)+v(\xs-\rho)-2v(\xs)}{\rho}+
o_\ep(1).
\eeq
\end{lem}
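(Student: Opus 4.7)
The plan is to view the left-hand sum as a Riemann--Stieltjes approximation of $\frac{1}{\pi}\int_{r\le|x-\xs|\le\rho}\frac{v_x(x)}{x-\xs}\,dx$, and then to integrate by parts to recover $\I^{1,\rho}[v](\xs)$ together with the prescribed boundary correction.

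I would split the sum by the sign of $x_i-\xs$. On the right side of $\xs$, each admissible index $i$ has $[x_i,x_{i+1}]\subset(\xs,\infty)$ (since $|x_i-\xs|\ge r$ and the hypothesis $\xs<x_{N_\ep}-\rho$ ensures that $x_{i+1}$ is defined and still lies to the right of $\xs$), so one may write $\ep=\int_{x_i}^{x_{i+1}}v_x(x)\,dx$ and hence
$$\frac{\ep}{x_i-\xs}=\int_{x_i}^{x_{i+1}}\frac{v_x(x)}{x_i-\xs}\,dx.$$
Because $v_x\ge 0$ and $\frac{1}{x_i-\xs}-\frac{1}{x-\xs}$ has constant sign and is monotone on $[x_i,x_{i+1}]$, the discrepancy with $\int_{x_i}^{x_{i+1}}\frac{v_x(x)}{x-\xs}\,dx$ is bounded by $\ep\bigl|\frac{1}{x_i-\xs}-\frac{1}{x_{i+1}-\xs}\bigr|$, and summing over the consecutive admissible $i$ telescopes to at most $\ep/r$. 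The analogous argument on the left, using instead the pairing $[x_{i-1},x_i]$, gives another $\ep/r$, so the total Riemann sum error is $O(\ep/r)=o_\ep(1)$ by the hypothesis $\ep/r\to0$.

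Next, I would match the union of all these intervals with the annular set $\{r\le|x-\xs|\le\rho\}$: the symmetric difference consists of boundedly many intervals (near $\xs\pm r$, $\xs\pm\rho$, and possibly one extra interval around the excluded closest point $x_{i_0}$), each of length at most one spacing and across each of which $v$ varies by at most~$\ep$, so the contribution to $\int v_x/|x-\xs|\,dx$ is controlled by $\ep/r=o_\ep(1)$. Setting $B_s:=(v(\xs+s)+v(\xs-s)-2v(\xs))/s$ and integrating by parts on each half-annulus yields
$$\int_{r\le|x-\xs|\le\rho}\frac{v_x(x)}{x-\xs}\,dx=B_\rho-B_r+\pi\bigl(\I^{1,\rho}[v](\xs)-\I^{1,r}[v](\xs)\bigr).$$
Since $v\in C^{1,1}$, both $B_r=O(r)$ (by a second-order Taylor expansion of $v$ at $\xs$) and $\I^{1,r}[v](\xs)=O(r)$ (by the short-range bound recalled in Section~\ref{prelimsec}) are $o_\ep(1)$, and the identity of the lemma follows upon dividing by~$\pi$ and collecting all the $o_\ep(1)$ errors.

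The main obstacle is making every estimate uniform in $\xs$ without any positive lower bound on $v_x$: in flat regions of $v$ the spacing $x_{i+1}-x_i$ can be arbitrarily large, so no estimate of the form $x_{i+1}-x_i\le c\ep$ is available to quantify the Riemann sum error by pointwise Taylor comparison. I expect this to be handled entirely by the monotonicity-based telescoping above and by the single-spacing bound $|v(b)-v(a)|\le\ep$ used for the domain-mismatch error, both of which sidestep the absent upper bound on the spacing.
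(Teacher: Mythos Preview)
Your proposal is correct and follows essentially the same strategy as the paper: partition the annulus by the level points $x_i$, exploit the monotonicity of $v$ so that the approximation error telescopes to $O(\ep/r)$, handle the boundary mismatches near $\xs\pm r$ and $\xs\pm\rho$ by the single-spacing bound $v(x_{i+1})-v(x_i)=\ep$, and use $v\in C^{1,1}$ to make $\I^{1,r}[v](\xs)$ and $B_r$ both $o_\ep(1)$. The only cosmetic difference is that the paper works directly with the kernel $(x-\xs)^{-2}$ and bounds $\int_{x_i}^{x_{i+1}}v(x)(x-\xs)^{-2}dx$ above and below by step functions before performing an Abel summation, whereas you first pass to $\int v_x(x)(x-\xs)^{-1}dx$ and do the integration by parts on the continuous side; these are dual presentations of the same telescoping estimate.
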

\begin{proof}
Since $v\in C^{1,1}(\R)$ and $r=o_\ep(1)$, there exists $C>0$ such that 
$$|\I^{1,r}[v](\xs)|\leq Cr=o_\ep(1).$$
Therefore, we have 
\beq\label{approxshortlemprima}\I^{1,\rho}[v](\xs)=\frac{1}{\pi}\int_{\xs-\rho}^{\xs-r} \dfrac{v(x) - v(\xs)}{(x-\xs)^2} dx+
\frac{1}{\pi}\int_{\xs+r}^{\xs+\rho} \dfrac{v(x) - v(\xs)}{(x-\xs)^2} dx+o_\ep(1).
\eeq
Let us estimate from above and below the first and second term in the right-hand side of \eqref{approxshortlemprima}.
We split
\beqs \int_{\xs-\rho}^{\xs-r} \dfrac{v(x) - v(\xs)}{(x-\xs)^2} dx 
=\int_{\xs-\rho}^{\xs-r} \dfrac{v(x)}{(x-\xs)^2} dx - \int_{\xs-\rho}^{\xs-r} \dfrac{v(\xs)}{(x-\xs)^2} dx.
\eeqs
Notice that we can integrate the second term as follows,
\beq \label{ndeq}
\int_{\xs-\rho}^{\xs-r} \dfrac{v(\xs)}{(x-\xs)^2} dx = v(\xs) \int_{\xs-\rho}^{\xs-r} \dfrac{1}{(x-\xs)^2} dx =  \dfrac{v(\xs) }{r} - \dfrac{v(\xs) }{\rho} .
\eeq
Next, we denote by $M_\rho$ and $M_r$ respectively the lowest and the biggest integer $i$ such that $x_i\in[\xs-\rho,\xs-r]$, that is
\beqs x_{M_\rho-1}<\xs-\rho\leq x_{M_\rho}\leq  x_{M_r}\leq\xs-r<x_{M_r+1}.\eeqs
Then, 
we split 
\beq
\int_{\xs-\rho}^{\xs-r} \dfrac{v(x)}{(x-\xs)^2} dx = \int_{\xs-\rho}^{x_{M_\rho}}\dfrac{v(x)}{(x-\xs)^2} dx + \sum_{i=M_\rho}^{M_r-1} \int_{x_i}^{x_{i+1}} \dfrac{v(x)}{(x-\xs)^2} dx + \int_{x_{M_r}}^{\xs-r} \dfrac{v(x) }{(x-\xs)^2} dx. 
\eeq
By using  the monotonicity of $v$, we obtain 
\beq\label{approlemmsec}\begin{split}
\int_{\xs-\rho}^{\xs-r} \dfrac{v(x)}{(x-\xs)^2} dx&
\leq \int_{\xs-\rho}^{x_{M_\rho}}\dfrac{ v(x_{M_\rho})}{(x-\xs)^2} dx + \sum_{i=M_\rho}^{M_r-1}  \int_{x_i}^{x_{i+1}} \dfrac{v(x_{i+1})}{(x-\xs)^2} dx + \int_{x_{M_r}}^{\xs-r} \dfrac{v(\xs-r)}{(x-\xs)^2} dx\\
&= -\dfrac{v(x_{M_\rho})}{\rho} - \dfrac{v(x_{M_\rho})}{x_{M_\rho}-\xs} +  \sum_{i=M_\rho}^{M_r-1}  \left(\dfrac{v(x_{i+1})}{x_i - \xs} - \dfrac{v(x_{i+1})}{x_{i+1} - \xs} \right ) \\&+  \dfrac{v(\xs-r)}{x_{M_r} - \xs}+ \dfrac{v(\xs-r)}{r}. 
\end{split}
\eeq
Recalling that $v(x_i)=\ep i$, we compute
\beqas
 \sum_{i=M_\rho}^{M_r-1}  \left(\dfrac{v(x_{i+1})}{x_i - \xs} - \dfrac{v(x_{i+1})}{x_{i+1} - \xs} \right ) 
 &=&
  \sum_{i=M_\rho}^{M_r-1}\left(\dfrac{\epsilon (i+1)}{x_i - \xs} - \dfrac{\epsilon (i+1)}{x_{i+1} - \xs} \right )\\
  &=& \sum_{i=M_\rho}^{M_r-1} \dfrac{\epsilon (i+1)}{x_i - \xs} -\sum_{i=M_\rho +1}^{M_r} \dfrac{\epsilon i}{x_i - \xs} \\
  &=&\sum_{i=M_\rho+1}^{M_r-1} \dfrac{\epsilon }{x_i - \xs}+ \dfrac{\epsilon (M_\rho+1)}{x_{M_\rho}- \xs}- \dfrac{\epsilon M_r }{x_{M_r} - \xs}\\
  &=&\sum_{i=M_\rho}^{M_r} \dfrac{\epsilon }{x_i - \xs}+ \dfrac{\epsilon M_\rho}{x_{M_\rho}- \xs}- \dfrac{\epsilon (M_r +1)}{x_{M_r} - \xs}\\
    &=&\sum_{i=M_\rho}^{M_r} \dfrac{\epsilon }{x_i - \xs}+ \dfrac{v(x_{M_\rho})}{x_{M_\rho}- \xs}- \dfrac{ v(x_{M_r })}{x_{M_r} - \xs}- \dfrac{\epsilon }{x_{M_r} - \xs}\\
   &\le&\sum_{i=M_\rho}^{M_r} \dfrac{\epsilon }{x_i - \xs}+ \dfrac{v(x_{M_\rho})}{x_{M_\rho}- \xs}- \dfrac{ v(x_{M_r })}{x_{M_r} - \xs}+
   \dfrac{\epsilon }{r}.
\eeqas
Plugging into \eqref{approlemmsec}, we obtain 

\beqas
\int_{\xs-\rho}^{\xs-r} \dfrac{v(x)}{(x-\xs)^2} dx
&\leq& \sum_{i=M_\rho}^{M_r} \dfrac{\epsilon}{x_i - \xs} + \dfrac{v(\xs-r)-v(x_{M_r})}{x_{M_r} - \xs} - \dfrac{v(x_{M_\rho})}{\rho} + \dfrac{v(\xs-r)}{r}+ \dfrac{\epsilon }{r}\\
&\leq& \sum_{i=M_\rho}^{M_r} \dfrac{\epsilon}{x_i - \xs} - \dfrac{v(x_{M_\rho})}{\rho} + \dfrac{v(\xs-r)}{r}+\dfrac{\epsilon}{r},\\
\eeqas
where in the last inequality we have used that 
$v(\xs-r)\geq v(x_{M_r})$ and $x_{M_r} < \xs$. Combining with \eqref{ndeq} and using that that $v(x_{M_\rho})\ge v(\xs-\rho)$, we obtain
\beq \label{3e1}\begin{split}
\int_{\xs-\rho}^{\xs-r} \dfrac{v(x) - v(\xs)}{(x-\xs)^2} dx& \leq \sum_{i=M_\rho}^{M_r} \dfrac{\epsilon}{x_i - \xs} + \dfrac{v(\xs-r) - v(\xs)}{r} -\dfrac{v(x_{M_\rho})-v(\xs)}{\rho} + \dfrac{\epsilon}{r}\\&
\leq  \sum_{i=M_\rho}^{M_r} \dfrac{\epsilon}{x_i - \xs} + \dfrac{v(\xs-r) - v(\xs)}{r} -\dfrac{v(\xs-\rho)-v(\xs)}{\rho} + \dfrac{\epsilon}{r}.
\end{split}
\eeq
Next, we will get a similar estimate for the second term in the right-hand side of \eqref{approxshortlemprima}. 
As before, we split
\beq\label{secondtermomabove} \begin{split}\int_{\xs+r}^{\xs+\rho} \dfrac{v(x) - v(\xs)}{(x-\xs)^2} dx&
= \int_{\xs+r}^{\xs+\rho} \dfrac{v(x)}{(x-\xs)^2} dx -  \int_{\xs+r}^{\xs+\rho}\dfrac{v(\xs)}{(x-\xs)^2} dx\\&
=\int_{\xs+r}^{\xs+\rho} \dfrac{v(x)}{(x-\xs)^2} dx- \dfrac{v(\xs)}{r} + \dfrac{v(\xs)}{\rho}.
\end{split}\eeq
Let $N_r$ and $N_\rho$ be respectively  the lowest and the biggest index $i$ such that $x_i\in[\xs+r,\xs+\rho]$, that is
\beqs x_{N_r-1}<\xs+r\leq x_{N_r}\leq x_{N_\rho}\le\xs+\rho< x_{N_\rho+1}.\eeqs
By the monotonicity of $v$, 
\beq\label{xmrhoxnrshortdilem} 
0\le  v(\xs+\rho)-v(x_{N_\rho})\le v(x_{N_\rho+1})-v(x_{N_\rho})=\ep\eeq
and 
\beq\label{xmrhoxnrshortdilembis} 
0\le v(x_{N_r})-v(\xs+r)\le v(x_{N_r})- v(x_{N_r-1})=\ep.
\eeq
By using again the monotonicity of $v$, we get
\beq\label{secondintlemapproxsfj}\begin{split}
\int_{\xs+r}^{\xs+\rho} \dfrac{v(x)}{(x-\xs)^2} dx
&= \int_{\xs+r}^{x_{N_r}}\dfrac{v(x)}{(x-\xs)^2} dx + \sum_{i=N_r}^{{N_\rho}-1} \int_{x_i}^{x_{i+1}} \dfrac{v(x)}{(x-\xs)^2} dx + \int_{x_{N_\rho}}^{\xs+\rho} \dfrac{v(x) }{(x-\xs)^2} dx\\
&\leq \int_{\xs+r}^{x_{N_r}}\dfrac{v(x_{N_r})}{(x-\xs)^2} dx + \sum_{i=N_r}^{{N_\rho}-1} \int_{x_i}^{x_{i+1}} \dfrac{v(x_{i+1})}{(x-\xs)^2} dx + \int_{x_{N_\rho}}^{\xs+\rho} \dfrac{v(\xs+\rho) }{(x-\xs)^2} dx\\
&= \dfrac{v(x_{N_r})}{r} - \dfrac{v(x_{N_r})}{x_{N_r} - \xs}+ \sum_{i=N_r}^{{N_\rho}-1} \left(\dfrac{v(x_{i+1})}{x_i-\xs} - \dfrac{v(x_{i+1})}{x_{i+1}-\xs}\right) \\&+\dfrac{v(\xs+\rho)}{x_{N_\rho}-\xs} - \dfrac{v(\xs+\rho)}{\rho}.
\end{split}
\eeq
As before, we compute
\beqas
\sum_{i=N_r}^{{N_\rho}-1} \left(\dfrac{v(x_{i+1})}{x_i-\xs} - \dfrac{v(x_{i+1})}{x_{i+1}-\xs}\right)
&=&  \sum_{i=N_r}^{{N_\rho}-1} \left(\dfrac{\epsilon (i+1)}{x_i-\xs} - \dfrac{\epsilon (i+1)}{x_{i+1}-\xs}\right)\\
&=& \sum_{i=N_r}^{{N_\rho}-1} \dfrac{\epsilon (i+1)}{x_i-\xs} -\sum_{i=N_r+1}^{{N_\rho}} \dfrac{\epsilon i}{x_{i}-\xs} \\
&=& \sum_{i=N_r+1}^{{N_\rho}-1} \dfrac{\epsilon }{x_i-\xs} +\dfrac{\epsilon (N_r+1)}{x_{N_r}-\xs} - \dfrac{\epsilon N_\rho}{x_{N_\rho}-\xs} \\
&=&\sum_{i=N_r}^{{N_\rho}-1} \dfrac{\epsilon }{x_i-\xs} +\dfrac{\epsilon N_r}{x_{N_r}-\xs} - \dfrac{\epsilon N_\rho}{x_{N_\rho}-\xs} \\
&=&\sum_{i=N_r}^{{N_\rho}-1} \dfrac{\epsilon }{x_i-\xs} +\dfrac{v(x_{N_r})}{x_{N_r}-\xs}-  \dfrac{v(x_{N_\rho})}{x_{N_\rho}-\xs} .
\eeqas
Plugging into \eqref{secondintlemapproxsfj} and using \eqref{xmrhoxnrshortdilem} and \eqref{xmrhoxnrshortdilembis}, we  obtain
\beqs\begin{split}
\int_{\xs+r}^{\xs+\rho} \dfrac{v(x)}{(x-\xs)^2} dx&\leq \sum_{i=N_r}^{{N_\rho}-1} \dfrac{\epsilon }{x_i-\xs}+   \dfrac{v(\xs+\rho)-v(x_{N_\rho})}{x_{N_\rho}-\xs}+ \dfrac{v(x_{N_r})}{r}  - \dfrac{v(\xs+\rho)}{\rho}\\
&\le \sum_{i=N_r}^{{N_\rho}-1} \dfrac{\epsilon }{x_i-\xs}+   \dfrac{\ep}{x_{N_\rho}-\xs}+ \dfrac{v(x_{N_r})}{r}  - \dfrac{v(\xs+\rho)}{\rho}
\\&=\sum_{i=N_r}^{{N_\rho}} \dfrac{\epsilon }{x_i-\xs}+ \dfrac{v(x_{N_r})}{r}  - \dfrac{v(\xs+\rho)}{\rho}\\&
\le\sum_{i=N_r}^{{N_\rho}} \dfrac{\epsilon }{x_i-\xs}+ \dfrac{v(\xs+r)}{r}  - \dfrac{v(\xs+\rho)}{\rho}+\frac{\ep}{r}.
\end{split}\eeqs
Inserting into  \eqref{secondtermomabove}, we get
\beq \label{3e2}
\int_{\xs+r}^{\xs+\rho} \dfrac{v(x)-v(\xs)}{(x-\xs)^2} dx \leq \sum_{i=N_r}^{{N_\rho}} \dfrac{\epsilon}{x_i-\xs} +\dfrac{v(\xs+r) - v(\xs)}{r} - \dfrac{v(\xs+\rho)-v(\xs)}{\rho}+\frac{\ep}{r}.
\eeq
Combining \eqref{3e1} and \eqref{3e2}, we obtain the upper bound
\beq \label{upb}\begin{split}
\int_{r\leq |x-\xs| \leq \rho} \dfrac{v(x)-v(\xs)}{(x-\xs)^2} dx 
&\leq \sum_{r\le |x_i-\xs|\le \rho} \dfrac{\epsilon}{x_i - \xs} +\dfrac{v(\xs+r) +v(\xs-r)- 2v(\xs)}{r}\\& - \dfrac{v(\xs+\rho)+v(\xs-\rho)-2v(\xs)}{\rho}+ \dfrac{2\epsilon}{r}.
\end{split}\eeq
Similarly,  one can get the following  lower bound estimate
\beq \label{lb}\begin{split}
\int_{r\leq |x-\xs| \leq \rho} \dfrac{v(x)-v(\xs)}{(x-\xs)^2} dx &
\geq\sum_{r\le|x_i-\xs|\le \rho} \dfrac{\epsilon}{x_i - \xs} +\dfrac{v(\xs+r) +v(\xs-r)- 2v(\xs)}{r}\\& - \dfrac{v(\xs+\rho)+v(\xs-\rho)-2v(\xs)}{\rho}
-\dfrac{2\epsilon}{r}.
\end{split}\eeq
Since $v\in C^{1,1}(\R)$, there exists a constant $C>0$ such that 
\beqs \left|\dfrac{v(\xs+r) +v(\xs-r)- 2v(\xs)}{r}\right|\leq Cr=o_\ep(1).
\eeqs 
Therefore, combining \eqref{upb} and \eqref{lb}, then dividing both sides by $\pi$ and using that $\ep/r=o_\ep(1)$, we finally obtain 
\beqs 
\frac{1}{\pi}\sum_{r\le|x_i-\xs|\le\rho} \dfrac{\epsilon}{x_i - \xs} =\frac{1}{\pi}\int_{r\le|x-\xs|\leq\rho} \dfrac{v(x) - v(\xs)}{(x-\xs)^2} dx
+ \frac{1}{\pi}\dfrac{v(\xs+\rho)+v(\xs-\rho)-2v(\xs)}{\rho}+o_\ep(1),
\eeqs
which together with \eqref{approxshortlemprima}  gives \eqref{I1rhoapplemeq}.
\end{proof}

\begin{lem}[Long range interaction]\label{approxIlonglem}
Under the assumptions of Lemma \ref{approxIshortlem} and for $r$ as in the lemma, 
 for any $\rho\ge r$ and $\xs\in(x_{M_\ep}+\rho,x_{N_\ep}-\rho)$,
\beq\label{approxIlonglemeq} \frac{1}{\pi}\sum_{|x_i-\xs|>\rho}\frac{\ep}{x_i-\xs}=\I^{2,\rho}[v](\xs)
-\frac{1}{\pi}\frac{v(\xs+\rho)+v(\xs-\rho)-2v(\xs)}{\rho}+ o_\ep(1).
\eeq
\end{lem}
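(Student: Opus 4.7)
The plan is to mimic the proof of Lemma \ref{approxIshortlem}, now for the long-range integral. Set $\pi\,\I^{2,\rho}[v](\xs) = I_+ + I_-$ where $I_+ := \int_{\xs+\rho}^{+\infty}(v(y)-v(\xs))/(y-\xs)^2\,dy$ and $I_- := \int_{-\infty}^{\xs-\rho}(v(y)-v(\xs))/(y-\xs)^2\,dy$ (both well defined since $v\in C^{1,1}(\R)$ is bounded), and treat $I_+$; the argument for $I_-$ is symmetric. Let $N_\rho$ be the largest index with $x_{N_\rho}\leq \xs+\rho$, so that $x_{N_\rho}\leq\xs+\rho<x_{N_\rho+1}$, and split
\[
I_+ = \int_{\xs+\rho}^{x_{N_\rho+1}}\frac{v(y)-v(\xs)}{(y-\xs)^2}\,dy + \sum_{i=N_\rho+1}^{N_\ep-1}\int_{x_i}^{x_{i+1}}\frac{v(y)-v(\xs)}{(y-\xs)^2}\,dy + \int_{x_{N_\ep}}^{+\infty}\frac{v(y)-v(\xs)}{(y-\xs)^2}\,dy.
\]

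On each bounded subinterval $[x_i,x_{i+1}]$ I would bound $v(y)$ above by $v(x_{i+1})=\ep(i+1)$ and below by $v(x_i)=\ep i$ using monotonicity, and integrate $1/(y-\xs)^2$ explicitly. The same telescoping trick as in the proof of Lemma \ref{approxIshortlem} (using $v(x_i)=\ep i$) collapses the resulting finite sum to $\sum_{i=N_\rho+1}^{N_\ep-1}\ep/(x_i-\xs)$ together with boundary contributions at $x_{N_\rho+1}$ and at $x_{N_\ep}$. The contribution at $N_\rho+1$, combined with the constant piece $-v(\xs)/\rho = -v(\xs)\int_{\xs+\rho}^{+\infty}(y-\xs)^{-2}\,dy$ and with the small boundary integral $\int_{\xs+\rho}^{x_{N_\rho+1}}$, yields $(v(\xs+\rho)-v(\xs))/\rho$ up to an error of order $\ep/\rho$, using that $|v(x_{N_\rho+1})-v(\xs+\rho)|\leq\ep$ by the monotonicity of $v$ and the definition of $N_\rho$.

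The genuinely new ingredient relative to Lemma \ref{approxIshortlem} is the infinite tail $\int_{x_{N_\ep}}^{+\infty}$. Since $v\in C^{1,1}(\R)$ is bounded, $N_\ep$ is finite and its definition yields $0\leq \sup_\R v - v(x_{N_\ep})<2\ep$; monotonicity extends this to $0\leq v(y)-v(x_{N_\ep})<2\ep$ for $y\geq x_{N_\ep}$, so the tail equals $(v(x_{N_\ep})-v(\xs))/(x_{N_\ep}-\xs)$ up to an error $O(\ep/(x_{N_\ep}-\xs))=O(\ep/\rho)=o_\ep(1)$ since $x_{N_\ep}-\xs>\rho$. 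Combined with the telescoping remainder at $N_\ep$, this extends the discrete sum by the previously missing term $\ep/(x_{N_\ep}-\xs)$, producing
\[
I_+ = \frac{v(\xs+\rho)-v(\xs)}{\rho} + \sum_{x_i>\xs+\rho}\frac{\ep}{x_i-\xs} + o_\ep(1).
\]
The mirror argument for $I_-$ (with the left-infinity tail controlled via $0\leq v(x_{M_\ep})-\inf_\R v<2\ep$) gives the analogous identity involving $v(\xs-\rho)$; adding the two formulas and dividing by $\pi$ produces \eqref{approxIlonglemeq}. The main obstacle, as in the short-range case, is the delicate bookkeeping required to absorb all the $O(\ep/\rho)$ remainders coming from the boundary pieces at $\xs\pm\rho$, from the index mismatches at $M_\ep$ and $N_\ep$, and from the two tails at $\pm\infty$ into a single $o_\ep(1)$ error, crucially using $\ep/r\to 0$ and $\rho\geq r$.
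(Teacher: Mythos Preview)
Your proposal is correct and follows essentially the same strategy as the paper: split the long-range integral at the grid points $x_i$, use monotonicity of $v$ on each subinterval together with the telescoping identity coming from $v(x_i)=\ep i$, control the infinite tails via $0\le \sup_\R v-\ep N_\ep<2\ep$ and $0\le \ep M_\ep-\inf_\R v<2\ep$, and absorb all remainders into $O(\ep/\rho)\le O(\ep/r)=o_\ep(1)$. The only organizational difference is that the paper first isolates the tails $\int_{-\infty}^{x_{M_\ep}}$ and $\int_{x_{N_\ep}}^{+\infty}$ and then invokes the already-proved estimates \eqref{3e1}--\eqref{3e2} on the finite middle pieces by substituting endpoints, whereas you redo the telescoping from scratch with a slightly different cut at $x_{N_\rho+1}$; both routes produce the same boundary terms and the same error bounds.
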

\begin{proof}
We decompose  $\I^{2,\rho}[v](\xs)$ as follows
\beq\label{middletwoterms} \I^{2,\rho}[v](\xs) = \int_{-\infty}^{x_{M_\ep}} \frac{v(x) - v(\xs)}{(x-\xs)^2} dx + \int_{x_{M_\ep}}^{\xs-\rho} \frac{v(x) - v(\xs)}{(x-\xs)^2} dx + \int_{\xs+\rho}^{x_{N_\ep}} \frac{v(x) - v(\xs)}{(x-\xs)^2} dx + \int_{x_{N_\ep}}^{+\infty} \frac{v(x) - v(\xs)}{(x-\xs)^2} dx.
\eeq
By the monotonicity of $v$, we get 
\beq \label{tail1}  \int_{-\infty}^{x_{M_\ep}} \frac{v(x) - v(\xs)}{(x-\xs)^2} dx  \leq  \int_{-\infty}^{x_{M_\ep}} \frac{v(x_{M_\ep})-v(\xs)}{(x-\xs)^2} dx = \frac{v(x_{M_\ep})-v(\xs)}{\xs-x_{M_\ep}},\eeq
and
\beq \label{tail2}  \int_{x_{N_\ep}}^{+\infty} \frac{v(x) - v(\xs)}{(x-\xs)^2} dx  \leq  \int_{x_{N_\ep}}^{+\infty} \frac{\sup_\R v - v(\xs)}{(x-\xs)^2} dx = \dfrac{\sup_\R v - v(\xs)}{x_{N_\ep}-\xs}. \eeq
One can similarly obtain a lower bound as follows
\beq \label{tail3}  \int_{-\infty}^{x_{M_\ep}} \frac{v(x) - v(\xs)}{(x-\xs)^2} dx  \geq  \int_{-\infty}^{x_{M_\ep}} \frac{\inf_\R v-v(\xs)}{(x-\xs)^2} dx = \frac{\inf_\R v-v(\xs)}{\xs-x_{M_\ep}},\eeq 
and 
\beq \label{tail4}  \int_{x_{N_\ep}}^{+\infty} \frac{v(x) - v(\xs)}{(x-\xs)^2} dx  \geq  \int_{x_{N_\ep}}^{+\infty} \frac{v(x_{N_\ep}) - v(\xs)}{(x-\xs)^2} dx = \dfrac{v(x_{N_\ep}) - v(\xs)}{x_{N_\ep}-\xs}. \eeq
To get the estimates for the middle two terms in the right-hand side of \eqref{middletwoterms}, we will proceed  as in the proof of Lemma \ref{approxIshortlem}. By  respectively replacing $\xs-\rho,\, \xs-r$   with $x_{M_\ep}$ and $\xs-\rho$ in \eqref{3e1} and $\xs+r$, $\xs+\rho$  with 
$\xs+\rho$ and $x_{N_\ep}$   in \eqref{3e2} we obtain 
\beq \label{rhs1}\begin{split}
\int_{x_{M_\ep}}^{\xs-\rho} \dfrac{v(x)-v(\xs)}{(x-\xs)^2} dx + \int_{\xs+\rho}^{x_{N_\ep}}  \dfrac{v(x)-v(\xs)}{(x-\xs)^2} dx 
\le \sum_{|x_i-\xs|\ge\rho} \dfrac{\epsilon}{x_i-\xs} +\dfrac{2\epsilon}{\rho}\\
+\dfrac{v(\xs+\rho)+v(\xs-\rho) -2 v(\xs)}{\rho} -\dfrac{v(x_{M_\ep})-v(\xs)}{\xs-x_{M_\ep}}  - \dfrac{v(x_{N_\ep})-v(\xs)}{x_{N_\ep}-\xs}.
\end{split}\eeq
Similarly,
\beq \label{lhs1}\begin{split}
\int_{x_{M_\ep}}^{\xs-\rho} \dfrac{v(x)-v(\xs)}{(x-\xs)^2} dx + \int_{\xs+\rho}^{x_{N_\ep}}  \dfrac{v(x)-v(\xs)}{(x-\xs)^2} dx
\geq \sum_{|x_i-\xs|\geq\rho} \dfrac{\epsilon}{x_i - \xs}  -\dfrac{2\epsilon}{\rho}\\
+\dfrac{v(\xs+\rho) +v(\xs-\rho)-2 v(\xs)}{\rho} - \dfrac{v(x_{M_\ep})-v(\xs)}{\xs-x_{M_\ep}}  - \dfrac{v(x_{N_\ep})-v(\xs)}{x_{N_\ep}-\xs}.
\end{split}\eeq
Combining \eqref{tail1},   \eqref{tail2} and  \eqref{rhs1}, we get 
\beq\begin{split}\label{final1apprlemmlomg}
\int_{-\infty}^{\xs-\rho} \dfrac{v(x)-v(\xs)}{(x-\xs)^2} dx+\int_{\xs+\rho}^{+\infty} \dfrac{v(x)-v(\xs)}{(x-\xs)^2} dx\leq
 \sum_{|x_i-\xs|\geq\rho} \dfrac{\epsilon}{x_i-\xs} \\
 +\dfrac{v(\xs+\rho) +v(\xs-\rho)-2 v(\xs)}{\rho} + \dfrac{\sup_\R v-\ep N_\ep}{x_{N_\ep}-\xs}+
 \dfrac{2\epsilon}{\rho}.
 \end{split}\eeq
Combining \eqref{tail3}, \eqref{tail4} and \eqref{lhs1}, we get 
\beq\begin{split}\label{final2apprlemmlomg}
\int_{-\infty}^{\xs-\rho} \dfrac{v(x)-v(\xs)}{(x-\xs)^2} dx+\int_{\xs+\rho}^{+\infty} \dfrac{v(x)-v(\xs)}{(x-\xs)^2} dx\geq
 \sum_{|x_i-\xs|\geq\rho} \dfrac{\epsilon}{x_i-\xs} \\
 +\dfrac{v(\xs+\rho) +v(\xs-\rho)-2 v(\xs)}{\rho} - \dfrac{\ep M_\ep-\inf_\R v}{\xs-x_{M_\ep}}-
 \dfrac{2\epsilon}{\rho}.
 \end{split}\eeq
Recalling the definition \eqref{MepNep} of $N_ \ep$ and $M_\ep$, we see that $0\le\sup_\R v-\ep N_\ep\le 2 \ep$ and 
$0\le \ep M_\ep-\inf_\R v\le2\ep$. Since in addition $x_{N_\ep}-\xs> \rho$, $\xs-x_{M_\ep}>\rho$, $\rho\ge r$ and $\ep/r=o_\ep(1)$, 
from \eqref{final1apprlemmlomg} and \eqref{final2apprlemmlomg} we finally get \eqref{approxIlonglemeq}.
\end{proof}

The  following proposition is an immediate consequence of Lemma \ref{approxIshortlem} and Lemma \ref{approxIlonglem}.

\begin{prop}\label{apprIcor}
Let $v\in C^{1,1}(\R)$ be non-decreasing and non-constant and 
$x_i$ defined as in \eqref{xi}.
Let  $r=r_\ep$ be such that $r\to0 $  and $\ep/r\to0$ as $\ep\to0$. Then, for any  $ \xs\in(x_{M_\ep}+r,x_{N_\ep}-r)$,
\beqs\frac{1}{\pi}\sum_{|x_i-\xs|\ge r}\frac{\ep}{x_i-\xs}=\I[v](\xs)+ o_\ep(1).
\eeqs
\end{prop}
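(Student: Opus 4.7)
The plan is to combine Lemma~\ref{approxIshortlem} and Lemma~\ref{approxIlonglem} with the choice $\rho=r$. Note that the hypothesis $\xs\in(x_{M_\ep}+r,x_{N_\ep}-r)$ of the proposition matches exactly the admissibility condition of both lemmas when $\rho=r$, so this choice is legitimate.

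With $\rho=r$, the short range lemma reads (since $\{i:r\le|x_i-\xs|\le r\}$ consists of at most two indices, those with $x_i=\xs\pm r$, and since the $x_i$'s are strictly monotone)
\begin{equation*}
\frac{1}{\pi}\sum_{i\neq i_0,\,|x_i-\xs|=r}\frac{\ep}{x_i-\xs}=\I^{1,r}[v](\xs)+\frac{1}{\pi}\frac{v(\xs+r)+v(\xs-r)-2v(\xs)}{r}+o_\ep(1),
\end{equation*}
while the long range lemma reads
\begin{equation*}
\frac{1}{\pi}\sum_{|x_i-\xs|>r}\frac{\ep}{x_i-\xs}=\I^{2,r}[v](\xs)-\frac{1}{\pi}\frac{v(\xs+r)+v(\xs-r)-2v(\xs)}{r}+o_\ep(1).
\end{equation*}
Adding these two identities, the boundary correction terms cancel, and $\I^{1,r}[v]+\I^{2,r}[v]=\I[v]$. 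On the left-hand side the two sums combine into the sum over all $i$ with $|x_i-\xs|\ge r$, up to the $i_0$-exclusion inherited from the short range lemma; but if $|x_{i_0}-\xs|=r$, the single discarded term is bounded by $\ep/r=o_\ep(1)$ and is absorbed into the error. This yields the claimed identity.

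There is essentially no obstacle here: the serious work was already done in the preceding two lemmas, and what remains is a bookkeeping step. An even shorter route would use the long range lemma alone, together with the observation that $v\in C^{1,1}(\R)$ forces both $\I^{1,r}[v](\xs)$ and $\frac{v(\xs+r)+v(\xs-r)-2v(\xs)}{r}$ to be $O(r)=o_\ep(1)$, so that $\I^{2,r}[v](\xs)=\I[v](\xs)+o_\ep(1)$; the passage from $|x_i-\xs|>r$ to $|x_i-\xs|\ge r$ introduces at most two additional terms, each of absolute value $\ep/r=o_\ep(1)$, and the conclusion follows identically.
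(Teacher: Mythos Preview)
Your proof is correct and follows exactly the route the paper intends: the proposition is stated there as ``an immediate consequence of Lemma~\ref{approxIshortlem} and Lemma~\ref{approxIlonglem},'' and your choice $\rho=r$ together with the cancellation of the boundary correction terms is precisely the intended bookkeeping. Your observation that the long range lemma alone already suffices (since $\I^{1,r}[v]$ and the second-difference quotient are both $O(r)$) is a nice sharpening but not a different argument.
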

\begin{rem}
Notice that in Lemma \ref{approxIshortlem}, Lemma \ref{approxIlonglem} and Proposition \ref{apprIcor}, the error $o_\ep(1)$ satisfies
\beq\label{o1bound}o_\ep(1)=O(r)+O\left(\frac{\ep}{r}\right).\eeq
\end{rem}

\begin{lem}\label{lemmaerrorshortdistance}
Under the assumptions  of Lemma  \ref{approxIshortlem}, let 
$\xs=x_{i_0}+\ep\gamma$.Then, there exists $r=r_\ep $   satisfying  $\ep^\frac{5}{8}\le r\le c\ep^\frac{1}{2}$, with $c$ depending on the $C^{1,1}$ norm of $v$, such that 
\beq\label{shortdostaneqlem}\frac{1}{\pi}\sum_{i\neq i_0\atop |x_i-\xs|< r}\frac{\ep}{x_i-\xs}= O(\ep^\frac{1}{8})+O(\gamma).\eeq
\end{lem}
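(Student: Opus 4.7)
The plan is to pair up the terms indexed by $i=i_0+k$ and $i=i_0-k$ and exploit the symmetric cancellation
\[
\sum_{k=1}^{K} \left(\frac{1}{k-p\gamma} - \frac{1}{k+p\gamma}\right) = O(\gamma),
\]
which, after a Taylor analysis of $x_{i_0\pm k}$ around $x_{i_0}$, is the leading form of the sum. I set $a_k := x_{i_0+k}-x_{i_0}>0$ and $b_k := x_{i_0}-x_{i_0-k}>0$ for $k\ge 1$, and let $p := v_x(x_{i_0})\ge 0$, $M := \|v\|_{C^{1,1}(\R)}$, $L := \|v_x\|_\infty$. Rewriting the sum as
\[
\pi S = \sum_{k=1}^{K_+}\frac{\ep}{a_k-\ep\gamma} - \sum_{k=1}^{K_-}\frac{\ep}{b_k+\ep\gamma},
\]
with $K_\pm$ the largest indices for which $|x_i-\xs|<r$, I would estimate each sum by expanding about $x_{i_0}$.

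The next step is to extract Taylor information: integrating $v_x$ and using $|v_x(y)-p|\le M|y-x_{i_0}|$ gives $\ep k = p c_k + O(Mc_k^2)$ for $c_k\in\{a_k,b_k\}$. In the non-degenerate regime $p>0$ this yields $|a_k-b_k|\le M(a_k^2+b_k^2)/(2p)$ and, for $k\le p^2/(2M\ep)$, the approximation $a_k,b_k = (\ep k/p)\,(1+O(M\ep k/p^2))$ with relative error at most $1/2$. In the degenerate case $p\to 0$ one recovers the quadratic lower bound $a_k,b_k\gtrsim\sqrt{\ep k/M}$ from $Mc_k^2/2\ge\ep k$.

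I would then choose $r=\ep^{5/8}\in[\ep^{5/8},c\ep^{1/2}]$ and split on the size of $p$. For $p\ge 2M\ep^{5/8}$ all $k\le K_\pm\approx pr/\ep$ stay in the Taylor range, so substituting
\[
\frac{\ep}{a_k-\ep\gamma} = \frac{p}{k-p\gamma} + O\!\left(\frac{M\ep}{p}\right)
\]
(and similarly for $b_k$) and summing, the principal terms cancel into $2p^2\gamma\sum 1/(k^2-p^2\gamma^2)=O(\gamma)$ while the accumulated Taylor remainder is $O(MK\ep/p)=O(Mr)$, well within the target. A counting argument bounding how many pairs $(a_k,b_k)$ can lie inside a window of length $|a_k-b_k|\lesssim Mr^2/p$ gives $|K_+-K_-|\lesssim MLr^2/(p\ep)+O(1)=O(1)$ in this regime, so the unpaired boundary terms contribute at most $O(\ep/r)=O(\ep^{3/8})$. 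When instead $p<2M\ep^{5/8}$ the quadratic lower bound forces $K_\pm\le 2Mr^2/\ep=O(\ep^{1/4})$, and the crude estimate $\sum\ep/a_k\lesssim\sqrt{\ep MK}$ already yields $|S|=O(\ep^{5/8})$, well within the claim.

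The main obstacle I expect is the transition regime $p\simeq 2M\ep^{5/8}$: the paired estimate deteriorates as $1/p$, while the degenerate estimate improves, and the exponents $5/8$ in $r$ and $1/8$ on the right-hand side are chosen precisely so these opposing errors meet. Verifying this balance uniformly in $v$ (with constants depending only on $\|v\|_{C^{1,1}}$), controlling the unpaired boundary terms for any admissible $r$, and making the term-by-term Taylor bookkeeping rigorous in the non-degenerate regime will be the delicate parts of the argument.
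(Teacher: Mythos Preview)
Your approach is correct and shares the paper's core mechanism (symmetric pairing $i_0+k \leftrightarrow i_0-k$ via a Taylor expansion of $v$ at $x_{i_0}$), but is organized differently. The paper splits into three cases on the size of $p=v_x(x_{i_0})$, chooses $r$ case by case (of order $\ep^{1/2}$ in the small/medium-$p$ cases and $\ep^{(1+\tau)/2}$ in the large-$p$ case), first bounds the sum \emph{centered at $x_{i_0}$} by $O(\ep^{1/8})$, and only afterwards proves that recentering at $\xs$ costs $O(\gamma)$. You instead fix $r=\ep^{5/8}$ once, work directly at $\xs$, and reduce to two regimes; this is a genuine streamlining, at the price of slightly more bookkeeping on the boundary indices $K_\pm$.

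One slip to fix: your counting argument for $|K_+-K_-|$ uses the Lipschitz spacing $a_{k+1}-a_k\ge\ep/L$, which yields $MLr^2/(p\ep)=L\ep^{1/4}/p$, and this is \emph{not} $O(1)$ across the whole regime $p\ge 2M\ep^{5/8}$. Use instead the sharper local spacing $a_{k+1}-a_k\approx\ep/p$ (valid since $|v_x-p|\le 2Mr$ on the window and $p\gtrsim Mr$), or equivalently compare $K_\pm$ to $\tilde K_\pm:=\lfloor p(r\pm\ep\gamma)/\ep\rfloor$: the Taylor error $|a_k-\ep k/p|\le Mr^2/p$ gives $|K_\pm-\tilde K_\pm|\le Mr^2/\ep+O(1)=M\ep^{1/4}+O(1)=O(1)$, while $|\tilde K_+-\tilde K_-|\le 2|p\gamma|+1=O(1)$. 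With this correction the unpaired boundary contribution is indeed $O(\ep/r)=O(\ep^{3/8})$, and the rest of your outline goes through. (Incidentally, for $2M\ep^{5/8}\le p\lesssim\ep^{3/8}$ one has $a_1\ge\ep/(2p)>r$, hence $K_\pm=0$ and the sum is empty, so the interesting analysis only begins at $p\gtrsim\ep^{3/8}$.)
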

\begin{proof}
 In what follows we denote by $c$ and $C$ different  constants independent of $\ep$ and $\xs$.
 Let $K>0$ be such that $\|v_{xx}\|_{L^\infty(\R)}\leq K$.
 We divide the proof into three cases.
\medskip

{\em Case 1: $v_x({x_{i_0}})\le 12 K^\frac12\ep^\frac12$.} 

By making a Taylor expansion, we get 
\beqs\begin{split}\ep=v({x_{i_0+1}})-v({x_{i_0}})&\leq v_x({x_{i_0}})(x_{i_0+1}-x_{i_0})+ \frac{K}{2}(x_{i_0+1}-x_{i_0})^2
\\&\leq \frac{v_x({x_{i_0}})^2}{2 (12)^2K}+\left(\frac{12^2K}{2}+\frac{K}{2}\right) (x_{i_0+1}-x_{i_0})^2\\&\leq \frac\ep2 +\frac{12^2+1}{2}K(x_{i_0+1}-x_{i_0})^2,
\end{split}
\eeqs
 from which 
$$x_{i_0+1}-x_{i_0}\geq c\ep^\frac{1}{2}.$$
Similarly, one can prove that 
$$x_{i_0}-x_{i_0-1}\geq c\ep^\frac{1}{2}.$$
Since $x_{i_0}$ is the closest point to $\xs$, we must have that $\xs-x_{i_0-1}\geq c\ep^\frac{1}{2}/2$ and 
$x_{i_0+1}-\xs\geq c\ep^\frac{1}{2}/2$.
Therefore, if we choose $r=r_\ep=c\ep^\frac{1}{2}/4$, there is  no index  $i\neq i_0$ for which 
$|\xs-x_i|\leq r$ and thus  \eqref{shortdostaneqlem} is trivially true. 

\medskip

Next, we show  that 
\beq\label{centeredserier}\frac{1}{\pi}\sum_{i\neq i_0\atop |x_i-\xs|< r}\frac{\ep}{x_i-x_{i_0}}= O(\ep^\frac{1}{8}).
\eeq
We consider two more cases.

\medskip

{\em Case 2:  $12 K^\frac12\ep^\frac12\leq v_x({x_{i_0}})\le \ep^{\frac12-\tau}$, for some $\tau\in (0,1/4)$.}

If $|\xs-x_{i_0}|\geq \ep^\frac{1}{2}/(4K^\frac{1}{2})$, then we choose 
$r=\ep^\frac{1}{2}/(8K^\frac{1}{2})$ and as  in Case 1, there is  no index  $i\neq i_0$ for which 
$|\xs-x_i|\leq r$. Thus     \eqref{shortdostaneqlem} holds  true. 

Now, assume $|\xs-x_{i_0}|\leq \ep^\frac{1}{2}/(4K^\frac{1}{2})$ and define
\beq\label{randep}r:=\frac{\ep^\frac{1}{2}}{2K^\frac{1}{2}}\geq 2 |\xs-x_{i_0}|.\eeq

Let $M_r$ and $N_r$ be respectively the smallest and the larger index $i$ such that $x_i\in (\xs-r,\xs+r)$, that is
\beq\label{xnr-xs}\begin{split}
&x_{M_r-1}\le \xs-r< x_{M_r}\\&
x_{N_r}< \xs+r\le x_{N_r+1}.
\end{split}\eeq
By   the monotonicity of $v$ and  \eqref{xnr-xs},
\beq\label{allthevxdfejtiypf}\begin{split}
-\ep&=v(x_{i_0})-v(x_{i_0+1})\leq  v(x_{i_0})-v(\xs)\le v(x_{i_0})-v(x_{i_0-1})=\ep,\\
-\ep& =  v(x_{N_r})- v(x_{N_r+1})\leq v(x_{N_r})-v(\xs+r)\leq 0\\
0&\le v(x_{M_r})- v(\xs-r)\leq v(x_{M_r})-v(x_{M_r-1}) =\ep.\\
\end{split}
\eeq
By making a Taylor expansion, we get, for $i=M_r,\ldots,N_r$
\beqs \ep(i-i_0)=v(x_i)-v({x_{i_0}})=v_x({x_{i_0}})(x_i-x_{i_0})+O(r^2),\eeqs
where $|O(r^2)|\leq K(2r)^2/2=\ep/2$,
from which 
\beq\label{distancexixi0} x_i-x_{i_0}=\frac{\ep(i-i_0)+O(r^2)}{v_x({x_{i_0}})}.\eeq
Therefore, we can write
\beq\label{sumx-xi0smallprima}\begin{split}\sum_{i\neq i_0\atop |x_i-\xs|< r}\frac{\ep}{x_i-x_{i_0}}&=\sum_{i=M_r\atop i\neq i_0}^{N_r}\frac{v_x(x_{i_0})\ep}{\ep(i-i_0)+O(r^2)}\\&=
\sum_{i=M_r\atop }^{i_0-1}\frac{v_x(x_{i_0})\ep}{\ep(i-i_0)+O(r^2)}+\sum_{i=i_0+1}^{N_r}\frac{v_x(x_{i_0})\ep}{\ep(i-i_0)+O(r^2)}.
\end{split}
\eeq
Now, suppose without loss of generality that $N_r-i_0\leq i_0-M_r$.
Then, 
\beq\label{pedalssonno}\begin{split}
&\sum_{i=M_r\atop }^{i_0-1}\frac{\ep}{\ep(i-i_0)+O(r^2)}+\sum_{i=i_0+1}^{N_r}\frac{\ep}{\ep(i-i_0)+O(r^2)}\\&=
\sum_{k=1}^{i_0-M_r}\frac{\ep}{-\ep k+O(r^2)}+\sum_{k=1}^{N_r-i_0}\frac{\ep}{\ep k+O(r^2)}\\&
=\sum_{k=1}^{N_r-i_0}\ep\left(\frac{1}{-\ep k+O(r^2)}+\frac{1}{\ep k+O(r^2)}\right)+\sum_{k=N_r-i_0+1}^{i_0-M_r}\frac{\ep}{-\ep k+O(r^2)}.
\end{split}
\eeq
We can bound the first term of the right hand-side of the last equality  as follows
\beq\label{serveperclaim3}\begin{split}
\left|\sum_{k=1}^{N_r-i_0}\ep\left(\frac{1}{-\ep k+O(r^2)}+\frac{1}{\ep k+O(r^2)}\right)\right|
&=\frac{2|O(r^2)|}{\ep}\left|\sum_{k=1}^{N_r-i_0}\frac{1}{(-k+\frac{O(r^2)}{\ep})(k+\frac{O(r^2)}{\ep})}\right|\\&
\leq\sum_{k=1}^\infty \frac{1}{k^2-\frac14}\\&
=C,
\end{split}
\eeq
where we used that $|O(r^2)|/\ep\leq 1/2$. 
Therefore, 
\beq\label{pedalssonnobis}\begin{split}
v_x(x_{i_0})\left|\sum_{k=1}^{N_r-i_0}\ep\left(\frac{1}{-\ep k+O(r^2)}+\frac{1}{\ep k+O(r^2)}\right)\right|\leq Cv_x(x_{i_0})\leq C\ep^{\frac12-\tau}.
\end{split}
\eeq
Next, by using that $\sum_{k=n}^m1/k\leq (m-n+1)/n$, we get
\beq\label{secondpieceestilemii0}\begin{split}
\left|\sum_{k=N_r-i_0+1}^{i_0-M_r}\frac{\ep}{-\ep k+O(r^2)}\right|&\leq \sum_{k=N_r-i_0+1}^{i_0-M_r}\frac{\ep}{\ep k-|O(r^2)|}\\&\leq
\frac{-(\ep N_r+\ep M_r-2\ep i_0)}{\ep (N_r+1)-\ep i_0-|O(r^2)|}\\&
=\frac{-(v(x_{N_r})+v(x_{M_r})-2v(x_{i_0}))}{v(x_{N_r})-v(x_{i_0})+\ep-|O(r^2)|}\\&
\le \frac{-(v(x_{N_r})+v(x_{M_r})-2v(x_{i_0}))}{v(x_{N_r})-v(x_{i_0})}.
\end{split}
\eeq
By \eqref{allthevxdfejtiypf} and the regularity of $v$, 
\beq\label{topestinrmri0}
0\le-( v(x_{N_r})+v(x_{M_r})-2v(x_{i_0}))\leq -(v(\xs+r)+ v(\xs-r)-2v(\xs))+3\ep\leq Kr^2+3\ep\le C\ep.
\eeq 
Now, by using that  $v_x(x_{i_0})\ge 12K^\frac12\ep^\frac12$ and that $|\xs-x_{i_0}|\leq r/2$,  and by  \eqref{allthevxdfejtiypf}, we get  
\beq\label{bottomestinrmri0}\begin{split}
v(x_{N_r})-v(x_{i_0})&\geq v(\xs+r)-v(x_{i_0})-\ep\\&\ge v_x(x_{i_0})(r-|\xs-x_{i_0}|)-\frac{K}{2}(2r)^2-\ep\\&
\ge  v_x(x_{i_0})\frac r2-\frac32\ep \\&
=v_x(x_{i_0})\frac r2-12K^\frac12\ep^\frac12\frac{r}{4}\\&
\ge v_x(x_{i_0})\frac{r}{4}\\&
= v_x(x_{i_0})\frac{\ep^\frac12}{8K^\frac12}.
\end{split}
\eeq
From \eqref{secondpieceestilemii0}, \eqref{topestinrmri0} and \eqref{bottomestinrmri0}, we infer that
\beq\label{secondpiecefinallemii0}
\left|\sum_{i=i_0-M_r+1}^{N_r-i_0}\frac{\ep v_x(x_{i_0})}{-\ep k+O(r^2)}\right|\leq \frac{v_x(x_{i_0})C\ep 8K^\frac12}{v_x(x_{i_0})\ep^\frac12}\leq
C\ep^\frac12.
\eeq
Finally, \eqref{sumx-xi0smallprima}, \eqref{pedalssonno}, \eqref{pedalssonnobis} and \eqref{secondpiecefinallemii0} imply
\beqs \left|\frac{1}{\pi}\sum_{i\neq i_0\atop |x_i-\xs|< r}\frac{\ep}{x_i-x_{i_0}}\right|\leq C\ep^{\frac12-\tau}\leq C\ep^\frac{1}{4},
\eeqs
which gives \eqref{centeredserier}.

\medskip

{\em Case 3:  $ v_x({x_{i_0}})\ge \ep^{\frac12-\tau}$, for some $\tau\in(0,1/4)$.}

As in Case 2,   we can assume that $|\xs-x_{i_0}|\le \ep^{\frac{1+\tau}{2}}$. 
Then, we define
\beq\label{rcase3}r:=2\ep^{\frac{1+\tau}{2}}\ge 2|\xs-x_{i_0}|.\eeq
Notice that $r\ge \ep^\frac{5}{8}$.
Assume,    without loss of generality,  that $N_r-i_0\le i_0-M_r$. Then   as before, we  write 
\beq\label{sumx-xi0smallprimacase3}\begin{split}\sum_{i\neq i_0\atop |x_i-\xs|< r}\frac{\ep}{x_i-x_{i_0}}&=
\sum_{k=1}^{N_r-i_0}\ep v_x(x_{i_0})\left(\frac{1}{-\ep k+O(r^2)}+\frac{1}{\ep k+O(r^2)}\right)+ \sum_{k=N_r-i_0+1}^{i_0-M_r}\frac{\ep v_x(x_{i_0})}{-\ep k+O(r^2)}.
\end{split}
\eeq
By \eqref{serveperclaim3} and the definition \eqref{rcase3}  of $r$, 
\beq\label{ccase3step1}
\left|\sum_{k=1}^{N_r-i_0}\ep v_x(x_{i_0})\left(\frac{1}{-\ep k+O(r^2)}+\frac{1}{\ep k+O(r^2)}\right)\right|\leq 
 Cv_x(x_{i_0})\frac{|O(r^2)|}{\ep}\leq C\ep^\tau.
\eeq
By \eqref{secondpieceestilemii0}, \eqref{topestinrmri0}
 and \eqref{bottomestinrmri0},  and by using that $ v_x({x_{i_0}})\ge C\ep^{\frac12-\tau}$ and \eqref{rcase3}, we get
  \beq\label{secondpieceestilemii0case3}\begin{split}
\left|\sum_{k=N_r-i_0+1}^{i_0-M_r}\frac{\ep}{-\ep k+O(r^2)}\right|&\leq\frac{C\ep}{ v_x(x_{i_0})\frac r2-\frac32\ep }\leq C\ep^\frac{\tau}{2},
\end{split}
\eeq
for $\ep$ small enough (independently of $\xs$).
Estimates  \eqref{sumx-xi0smallprimacase3},\eqref{ccase3step1} and  \eqref{secondpieceestilemii0case3} imply
\beqs\left|\sum_{i\neq i_0\atop |x_i-\xs|< r}\frac{\ep}{x_i-x_{i_0}}\right|\leq C\ep^\frac{\tau}{2}\leq C\ep^\frac{1}{8},
\eeqs
which gives \eqref{centeredserier}
\medskip

Finally, to prove \eqref{shortdostaneqlem}, we estimate
\beq\label{sumseriesdiffexsxi0}\begin{split}\left|\sum_{i\neq i_0\atop |x_i-\xs|< r}\frac{\ep}{x_i-\xs}
-\sum_{i\neq i_0\atop |x_i-x_{i_0}|<r}\frac{\ep}{x_i-x_{i_0}}\right|&=\left|
\sum_{i\neq i_0\atop |x_i-\xs|< r}\frac{\ep^2\gamma}{(x_i-\xs)(x_i-x_{i_0})}\right|.
\end{split}\eeq
Assume, without loss of generality that $\xs=x_{i_0}+\ep\gamma$, with $\gamma\ge0$, that is, $\xs\in[x_{i_0},x_{i_0+1})$.
Then, 
\beqs |x_i-\xs|\ge\begin{cases}|x_i-x_{i_0}|&\text{if }i\leq i_0-1\\
\frac{x_{i_0+1}-x_{i_0}}{2}&\text{if }i=i_0+1 \\
x_i-x_{i_0+1}&\text{if }i\geq i_0+2.
\end{cases}
\eeqs
Moreover, by \eqref{proplippart1}, $x_{i_0+1}-x_{i_0}\ge \ep L^{-1}$. Therefore, 
\beq\label{lastlemmagammaxixi0}\left|
\sum_{i\neq i_0\atop |x_i-\xs|< r}\frac{\ep^2\gamma}{(x_i-\xs)(x_i-x_{i_0})}\right|\le \sum_{i\leq i_0-1}\frac{\ep^2\gamma}{(x_i-x_{i_0})^2}
+2L^2\gamma+ \sum_{i\geq i_0+2}\frac{\ep^2\gamma}{(x_i-x_{i_0+1})^2}\le C\gamma,
\eeq
where in the last inequality we used \eqref{i/k^2sum}.
By \eqref{sumseriesdiffexsxi0} and \eqref{lastlemmagammaxixi0} we get 
\beqs
\left|\sum_{i\neq i_0\atop |x_i-\xs|<r}\frac{\ep}{x_i-\xs}
-\sum_{i\neq i_0\atop |x_i-x_{i_0}|<r}\frac{\ep}{x_i-x_{i_0}}\right|\le C\gamma,
\eeqs
which together with \eqref{centeredserier} gives  \eqref{shortdostaneqlem}. 
\end{proof}

The following proposition is an immediate consequence of  Lemma \ref{approxIshortlem}, Proposition \ref{apprIcor} and Lemma \ref{lemmaerrorshortdistance}.
\begin{prop}\label{apprIcorall}
Let $v\in  C^{1,1}(\R)$ be non-decreasing and non-constant and 
$x_i$ defined as in \eqref{xi}. Then, there  exists $c>0$ depending on the $C^{1,1}$ norm of $v$ such that if $\rho\ge c\ep^\frac12$, 
and $\xs\in(x_{M_\ep}+\rho,x_{N_\ep}-\rho)$, 
$\xs=x_{i_0}+\ep\gamma$, then
\beq\label{apprIcorallest1}\frac{1}{\pi}\sum_{i\neq i_0\atop |x_i-\xs|\le \rho}\frac{\ep}{x_i-\xs}=\I^{1,\rho}[v](\xs)+O(\gamma)+\frac{1}{\pi}\frac{v(\xs+\rho)+v(\xs-\rho)-2v(\xs)}{\rho}+o_\ep(1),
\eeq
and
\beq\label{apprIcorallest2}\frac{1}{\pi}\sum_{i\neq i_0}\frac{\ep}{x_i-\xs}=\I[v](\xs)+ o_\ep(1)+O(\gamma).
\eeq
\end{prop}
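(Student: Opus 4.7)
The plan is to deduce both estimates by splitting the discrete sums at an intermediate radius $r$ chosen as in Lemma \ref{lemmaerrorshortdistance}, and then applying the lemmas from earlier in this section piece by piece.

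First I would fix $r=r_\ep$ furnished by Lemma \ref{lemmaerrorshortdistance}, so that $\ep^{5/8}\le r\le c\ep^{1/2}$ with $c$ depending only on the $C^{1,1}$ norm of $v$. This choice has two virtues: $r\to 0$ and $\ep/r\to 0$ as $\ep\to 0$, which is precisely the hypothesis required to invoke Lemma \ref{approxIshortlem}, Lemma \ref{approxIlonglem}, and Proposition \ref{apprIcor}; and $r\le \rho$ once the constant $c$ in the statement is chosen at least as large as the one from Lemma \ref{lemmaerrorshortdistance}. The domain constraint $\xs\in(x_{M_\ep}+\rho,x_{N_\ep}-\rho)$ then automatically implies $\xs\in(x_{M_\ep}+r,x_{N_\ep}-r)$, so all the cited results apply at the point $\xs$.

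For the first identity \eqref{apprIcorallest1}, I would split
\beqs
\frac{1}{\pi}\sum_{i\neq i_0\atop |x_i-\xs|\le \rho}\frac{\ep}{x_i-\xs}
=\frac{1}{\pi}\sum_{i\neq i_0\atop |x_i-\xs|<r}\frac{\ep}{x_i-\xs}
+\frac{1}{\pi}\sum_{r\le |x_i-\xs|\le \rho}\frac{\ep}{x_i-\xs}.
\eeqs
The first piece is controlled by Lemma \ref{lemmaerrorshortdistance}, giving $O(\ep^{1/8})+O(\gamma)=o_\ep(1)+O(\gamma)$. The second piece is exactly the left-hand side of \eqref{I1rhoapplemeq} in Lemma \ref{approxIshortlem}, so it equals $\I^{1,\rho}[v](\xs)+\pi^{-1}(v(\xs+\rho)+v(\xs-\rho)-2v(\xs))/\rho+o_\ep(1)$. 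Adding the two pieces yields \eqref{apprIcorallest1}.

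For the second identity \eqref{apprIcorallest2}, the same splitting strategy applies, this time cutting at $r$ and using Proposition \ref{apprIcor} on the tail $|x_i-\xs|\ge r$, together with Lemma \ref{lemmaerrorshortdistance} on the inner piece $|x_i-\xs|<r$, $i\ne i_0$. The tail contributes $\I[v](\xs)+o_\ep(1)$, while the inner piece contributes $O(\ep^{1/8})+O(\gamma)=o_\ep(1)+O(\gamma)$, and summing the two gives \eqref{apprIcorallest2}.

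Since every step is an application of an already-established result, no genuinely new estimate is required; the only thing to verify carefully is the compatibility of the hypotheses on $\xs$ and on $r$, which is the point handled in the first paragraph. I therefore expect no serious obstacle beyond bookkeeping.
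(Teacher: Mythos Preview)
Your proposal is correct and follows essentially the same approach as the paper: fix $r$ from Lemma \ref{lemmaerrorshortdistance}, split the sum at radius $r$, apply Lemma \ref{lemmaerrorshortdistance} to the inner piece and Lemma \ref{approxIshortlem} (respectively Proposition \ref{apprIcor}) to the outer piece. The only cosmetic difference is that the paper makes the $o_\ep(1)$ in the outer piece explicit as $O(\ep^{3/8})$ via \eqref{o1bound}, and that in your displayed splitting the second sum should carry the condition $i\neq i_0$ as well (to match the LHS of \eqref{I1rhoapplemeq}); neither point affects the argument.
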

\begin{proof}
Fix $\xs$ and let $r$ and $c$ be given by Lemma  \ref{lemmaerrorshortdistance}. Then $\ep^\frac{5}{8}\le r\le c\ep^\frac{1}{2}\le\rho$.
By Lemma \ref{approxIshortlem} and recalling \eqref{o1bound},
\beqs \frac{1}{\pi}\sum_{i\neq i_0\atop r\le |x_i-\xs|\le \rho}\frac{\ep}{x_i-\xs} =\I^{1,\rho}[v](\xs)+\frac{1}{\pi}\frac{v(\xs+\rho)+v(\xs-\rho)-2v(\xs)}{\rho}+O\left(\ep^\frac{3}{8}\right).
\eeqs
Combining this estimate with \eqref{shortdostaneqlem} yields \eqref{apprIcorallest1}. 

Similarly, by Proposition \ref{apprIcor} and Lemma \ref{lemmaerrorshortdistance}, we get \eqref{apprIcorallest2}.
\end{proof}
\begin{rem}\label{vcopntstanti1rem}
If $\ep|\gamma|=|\xs-x_{i_0}|>c\ep^\frac{1}{2}\ge r$, then $|\xs-x_{i}|>r$ for all $i$ and 
\beqs\label{vcopntstanti1}\begin{split} \frac{1}{\pi}\sum_{i\neq i_0\atop |x_i-\xs|\le \rho}\frac{\ep}{x_i-\xs}&=
\frac{1}{\pi}\sum_{r< |x_i-\xs|\le \rho}\frac{\ep}{x_i-\xs}\\&=
\I^{1,\rho}[v](\xs)+ \frac{1}{\pi}\frac{v(\xs+\rho)+v(\xs-\rho)-2v(\xs)}{\rho}+o_\ep(1).
\end{split} \eeqs
\end{rem}
\begin{rem}\label{remgamma0}
If $\xs=x_{i_0}$, then $\gamma=0$ and 
\beq\label{i1onparticles}\frac{1}{\pi}\sum_{i\neq i_0}\frac{\ep}{x_i-x_{i_0}}=\I[v](x_{i_0})+ o_\ep(1).\eeq
\end{rem}

\begin{lem}\label{vapproxphisteps}
Let $v\in  C^{1,1}(\R)$ be non-decreasing and non-constant and 
$x_i$ be defined as in \eqref{xi}. Let $\phi$ be defined by \eqref{phi}. Let 
$M_\ep\le M<N\leq N_\ep$ and $R\ge c\ep^\frac{1}{2}$, with $c>0$  given by Proposition \ref{apprIcorall}.  Then, 
for all  $x\in (x_{M}+R,x_N-R)$
$$\left|\sum_{i=M}^{N}\ep \phi\left(\frac{x-x_i}{\ep\delta}\right)+\ep M-v(x)\right|\leq o_\ep(1)\left(1+\frac{\delta}{R}\right),$$
with $o_\ep(1)$ independent of $R$ and $x$.  
\end{lem}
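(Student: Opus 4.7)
The plan is to insert the asymptotic expansion \eqref{phiinfinity} of $\phi$ into the sum, reduce what remains to a discrete Riemann-sum approximation of $\I[v]$ handled by Proposition \ref{apprIcorall}, and treat the indices missing from $[M_\ep,N_\ep]\setminus[M,N]$ by an elementary tail estimate built on $|x-x_i|\ge R$. Throughout write $\phi=H+\tilde\phi$ with $\tilde\phi:=\phi-H$, so by \eqref{phiinfinity} $\tilde\phi(y)=-(\al\pi y)^{-1}+O(1/y^2)$ for $|y|\ge 1$, while $|\tilde\phi|\le 1$ everywhere.

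Fix $x\in(x_M+R,x_N-R)$, let $x_{i_0}$ be the closest point to $x$ among the $x_i$'s, write $x=x_{i_0}+\ep\gamma$, and set $y_i:=(x-x_i)/(\ep\delta)$. A direct count gives $\sum_{i=M}^N\ep H(y_i)+\ep M=\ep\cdot\#\{i\in[M,N]:x_i\le x\}+\ep M$, which by monotonicity of $v$ and the definition of the $x_i$'s differs from $v(x)$ by at most $\ep$. So it suffices to show $|\sum_{i=M}^N\ep\tilde\phi(y_i)|=o_\ep(1)(1+\delta/R)$. I split indices into the near regime $|y_i|<1$ and the far regime $|y_i|\ge 1$. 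By the spacing bound $x_{i+1}-x_i\ge\ep/L$ from Lemma \ref{lemdistxi}, at most $O(1+L\delta)$ indices lie in the near regime, contributing $O(\ep(1+\delta))=o_\ep(1)$. In the far regime I substitute $\tilde\phi(y_i)=-(\al\pi y_i)^{-1}+O(1/y_i^2)$; the $O(1/y_i^2)$ piece summed against $\ep$ is bounded by $K_1(\ep\delta)^2\sum_{i\ne i_0}\ep/(x-x_i)^2=O(\ep\delta^2)$ via \eqref{i/k^2sum}. After absorbing an analogous $O(\ep\delta(1+\delta))$ correction that converts the range $|y_i|\ge 1$ back to $i\ne i_0$, the task reduces to controlling $T:=-\frac{\ep\delta}{\al\pi}\sum_{i\in[M,N],\,i\ne i_0}\ep/(x-x_i)$.

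I decompose $\sum_{i\in[M,N],\,i\ne i_0}=\sum_{i\in[M_\ep,N_\ep],\,i\ne i_0}-\sum_{i\in[M_\ep,M-1]}-\sum_{i\in[N+1,N_\ep]}$. The full sum is handled by Proposition \ref{apprIcorall}, equation \eqref{apprIcorallest2}, giving $-\pi\I[v](x)+o_\ep(1)+O(\gamma)$; multiplying by $\ep\delta/(\al\pi)$ the $\I[v]$-term is $O(\ep\delta)=o_\ep(1)$ since $v\in C^{1,1}$, while the $O(\gamma)$ term is nonzero only when some $x_i$ with $i\ne i_0$ lies within $r\le c\ep^{1/2}$ of $x$, in which case the geometric argument of Remark \ref{vcopntstanti1rem} forces the spacing around $x_{i_0}$ to be $O(\ep^{1/2})$, hence $|\gamma|=O(\ep^{-1/2})$, so $\ep\delta|\gamma|=O(\delta\ep^{1/2})=o_\ep(1)$ as $\delta\to 0$. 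For the two tail sums, every index satisfies $|x-x_i|\ge R$, and $(M-M_\ep)\ep,(N_\ep-N)\ep\le\sup v-\inf v$, so each tail is bounded by a constant multiple of $1/R$, producing a total contribution $O(\ep\delta/R)=o_\ep(1)(\delta/R)$. All constants depend on $v$ but not on $x$ or $R$, establishing the claim. The main obstacle I expect is precisely the bookkeeping of the $O(\gamma)$ remainder coming from Lemma \ref{lemmaerrorshortdistance}, which is potentially unbounded for generic $\gamma$; the saving observation is that the remainder is active only in the regime where the spacing forces $|\gamma|=O(\ep^{-1/2})$, and this is exactly absorbed by the extra factor $\ep\delta$.
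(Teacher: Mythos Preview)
Your argument is correct and follows essentially the same line as the paper's: split $\phi=H+\tilde\phi$, reduce via \eqref{phiinfinity} to the discrete sum $\sum_{i\ne i_0}\ep/(x_i-x)$, invoke Proposition~\ref{apprIcorall} together with the observation that the effective $O(\gamma)$ remainder is at worst $O(\ep^{-1/2})$ (hence harmless after multiplication by $\ep\delta$), and bound the tail indices using $|x-x_i|\ge R$. The only organisational differences are that you decompose by index range ($[M,N]$ versus $[M_\ep,N_\ep]$) and apply \eqref{apprIcorallest2} for the full operator $\I$, whereas the paper decomposes spatially ($|x_i-x|\le R$ versus $>R$) and applies \eqref{apprIcorallest1} for $\I^{1,R}$; and the paper handles the single index $i_0$ directly by $\ep\phi(z_{i_0})\le\ep$ rather than introducing a near/far split on $|y_i|$, which is slightly cleaner bookkeeping.

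One minor correction: for $\delta$ small the spacing bound $|x-x_i|\ge\ep/(2L)$ for $i\ne i_0$ already forces $|y_i|\ge 1$, so your near regime contains at most the single index $i_0$; consequently the cost of converting $\sum_{|y_i|\ge 1}$ to $\sum_{i\ne i_0}$ (and likewise the $i_0$-contribution in the $O(1/y_i^2)$ sum) is $O(\ep)$ rather than the $O(\ep\delta(1+\delta))$ you state, since when $|y_{i_0}|\ge 1$ the single term $\ep/|y_{i_0}|$ is only bounded by $\ep$. This is still $o_\ep(1)$ and does not affect the conclusion.
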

\begin{proof}
Fix $x\in (x_{M}+R,x_N-R)$, and let $x_{i_0}$ be the closest point among the $x_i$'s to $x$. Then, $x_{i_0-1}<x<x_{i_0+1}$ and by the monotonicity of $v$,
\beq\label{vxi0lem}\ep (i_0-1)=v( x_{i_0-1})\le v(x)\leq v( x_{i_0+1})=\ep(i_0+1).\eeq
By using \eqref{vxi0lem},  estimate \eqref{phiinfinity} and that $\phi\leq 1$, we get
\beqs\begin{split}
&\sum_{i=M}^{N}\ep \phi\left(\frac{x-x_i}{\ep\delta}\right)+\ep M-v(x)\\&=
\sum_{i=M}^{i_0-1}\ep \phi\left(\frac{x-x_i}{\ep\delta}\right)+\ep\phi\left(\frac{x-x_{i_0}}{\ep\delta}\right)+ \sum_{i=i_0+1}^{N}\ep \phi\left(\frac{x-x_i}{\ep\delta}\right)+\ep M-v(x)\\&
\leq \sum_{i=M}^{i_0-1}\ep \left(1+\frac{\ep\delta}{\alpha\pi(x_i-x)}+\frac{K_1\ep^2\delta^2}{(x_i-x)^2}\right)+\ep\\&+
\sum_{i=i_0+1}^{N}\ep\left(\frac{\ep\delta}{\alpha\pi(x_i-x)}+\frac{K_1\ep^2\delta^2}{(x_i-x)^2}\right)+\ep M-\ep (i_0-1)\\&
=\ep\delta  \sum_{i=M\atop i\neq i_0}^{N} \frac{\ep}{\alpha\pi(x_i-x)}+\ep\delta^2 K_1 \sum_{i=M\atop i\neq i_0}^{N}\frac{\ep^2}{(x_i-x)^2}+2\ep\\&
=\ep\delta   \sum_{ i\neq i_0\atop |x_i-x|\le R}\frac{\ep}{\alpha\pi(x_i-x)}+\ep\delta  \sum_{i=M\atop |x_i-x|>R}^{N} \frac{\ep}{\alpha\pi(x_i-x)}
+\ep\delta^2 K_1 \sum_{i=M\atop i\neq i_0}^{N}\frac{\ep^2}{(x_i-x)^2}+2\ep.
\end{split}
\eeqs
We can bound the second term above as follows
\beqs\begin{split}
 \ep \delta\left| \sum_{i=M\atop |x_i-x|>R}^{N} \frac{\ep}{\alpha\pi(x_i-x)}\right|&\leq \ep \delta
 \sum_{i=M\atop |x_i-x|>R}^{N} \frac{\ep}{\alpha\pi|x_i-x|}\leq \frac{\ep\delta(\ep N-\ep M+\ep)}{\alpha\pi R}\\&=
 \frac{\ep\delta(v(x_N)-v(x_M)+\ep)}{\alpha\pi R}\leq \ep(2\|v\|_\infty+\ep) \frac{\delta }{\alpha\pi R}.
\end{split}\eeqs
Therefore, by  Proposition \ref{apprIcorall}, Remark  \ref{vcopntstanti1rem} and  \eqref{i/k^2sum}, we get
\beqs\begin{split}
\sum_{i=M}^{N}\ep \phi\left(\frac{x-x_i}{\ep\delta}\right)+\ep M-v(x)&\le\frac{ \ep\delta}{\alpha}\left( \I^{1,R}[v](\xs)+ O(\ep^{-\frac12})+C\right)
\\&+\ep(2\|v\|_\infty +\ep)\frac{\delta }{\alpha\pi R}+C\ep\delta^2+2\ep\\&
\leq o_\ep(1)\left(1+\frac{\delta}{R}\right).
\end{split}
\eeqs
Similarly, one can prove that 
\beqs
\sum_{i=M}^{N}\ep \phi\left(\frac{x-x_i}{\ep\delta}\right)+\ep M-v(x)\geq o_\ep(1)\left(1+\frac{\delta}{R}\right)\eeqs
and this concludes the proof of the lemma.
\end{proof}
\begin{lem}\label{vapprowhatremains}
Under the assumptions of Lemma \ref{vapproxphisteps},    there exists $C>0$  independent of $\ep$  and $R$ such that 
for all  $x>x_N+R$, 
\beq\label{vapprowhatremainseq1}
\left|\sum_{i=M}^{N}\ep \phi\left(\frac{x-x_i}{\ep\delta}\right)+\ep M-v(x_N)\right|\leq C\ep\left(1+\frac{\delta}{R}\right),
\eeq
and 
for all  $x<x_M-R$, 
\beq\label{vapprowhatremainseq2}
\left|\sum_{i=M}^{N}\ep \phi\left(\frac{x-x_i}{\ep\delta}\right)\right|\leq C\ep\left(1+\frac{\delta}{R}\right).
\eeq
\end{lem}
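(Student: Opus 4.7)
\medskip

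\textbf{Proof plan for Lemma \ref{vapprowhatremains}.}  The idea is to exploit that in both regimes the argument $(x-x_i)/(\ep\delta)$ of $\phi$ stays bounded away from $0$, so that the precise asymptotics \eqref{phiinfinity} replace $\phi$ by a Heaviside plus an explicit $1/x$ tail, and then to sum that expansion termwise. Since we are in the regime of Lemma \ref{vapproxphisteps}, where $R\ge c\ep^{1/2}$ and $\delta\le 1$ eventually, for every $i=M,\dots,N$ one has $|x-x_i|\ge R\ge \ep\delta$, so $|(x-x_i)/(\ep\delta)|\ge 1$ and \eqref{phiinfinity} is applicable with $H((x-x_i)/(\ep\delta))$ equal respectively to $1$ in the case $x>x_N+R$ and to $0$ in the case $x<x_M-R$.

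First I would treat the case $x>x_N+R$. By \eqref{phiinfinity},
\beqs
\ep\phi\!\left(\tfrac{x-x_i}{\ep\delta}\right)=\ep-\tfrac{\ep^2\delta}{\alpha\pi(x-x_i)}+O\!\left(\tfrac{\ep^3\delta^2}{(x-x_i)^2}\right),
\eeqs
and summing over $i=M,\dots,N$ the leading terms give $\ep(N-M+1)=v(x_N)-v(x_M)+\ep=v(x_N)-\ep M+\ep$, so that
\beqs
\sum_{i=M}^{N}\ep\phi\!\left(\tfrac{x-x_i}{\ep\delta}\right)+\ep M-v(x_N)=\ep-\tfrac{\ep^2\delta}{\alpha\pi}\sum_{i=M}^{N}\tfrac{1}{x-x_i}+O\!\left(\ep\delta^2\sum_{i=M}^{N}\tfrac{\ep^2}{(x-x_i)^2}\right).
\eeqs
The first-order tail is controlled by $|x-x_i|\ge R$ and by $\sum_{i=M}^N\ep\le v(x_N)-v(x_M)+\ep\le 2\|v\|_\infty+\ep$, giving a bound $C\ep\delta/R$. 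For the quadratic remainder I would invoke \eqref{i/k^2sum} applied at $\xs=x$ (whose closest $x_i$ is $x_N$, and whose contribution is $\ep^2/(x-x_N)^2\le \ep^2/R^2$) to obtain $\sum_{i=M}^N \ep^2/(x-x_i)^2\le cL^2+\ep^2/R^2\le C$; this produces an $O(\ep\delta^2)$ error, which is absorbed into $C\ep(1+\delta/R)$ since $\delta\le 1$.  Putting the three contributions together proves \eqref{vapprowhatremainseq1}.

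The case $x<x_M-R$ is handled by the same scheme, with the key difference that now $(x-x_i)/(\ep\delta)\le -1$, so $H((x-x_i)/(\ep\delta))=0$ and \eqref{phiinfinity} gives
\beqs
\ep\phi\!\left(\tfrac{x-x_i}{\ep\delta}\right)=-\tfrac{\ep^2\delta}{\alpha\pi(x-x_i)}+O\!\left(\tfrac{\ep^3\delta^2}{(x-x_i)^2}\right).
\eeqs
There is no ``constant'' term to telescope, and the first-order and quadratic tails are bounded exactly as above (using $|x-x_i|\ge R$ for the linear piece and \eqref{i/k^2sum}, with closest point $x_M$, for the quadratic piece), yielding \eqref{vapprowhatremainseq2}.

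I expect no serious obstacle here; the only point to be careful about is that \eqref{phiinfinity} is valid only for $|y|\ge 1$, which is why one first checks $R\ge \ep\delta$, a consequence of $R\ge c\ep^{1/2}$ for $\ep$ small enough. Also worth noting is that the $\ep\delta^2$ remainder is not of the same form as the target $\ep\delta/R$, but it is harmlessly dominated by $\ep$ in the regime $\delta\le 1$, so both cases yield the uniform bound $C\ep(1+\delta/R)$ with $C$ depending only on $\|v\|_\infty$ and on the Lipschitz constant of $v$, independently of $\ep$ and $R$.
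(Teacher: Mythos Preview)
Your proposal is correct and follows essentially the same approach as the paper: apply the asymptotic expansion \eqref{phiinfinity} termwise, bound the $1/(x-x_i)$ tail using $|x-x_i|\ge R$ together with $\sum_i\ep\le 2\|v\|_\infty+\ep$, and control the quadratic remainder via \eqref{i/k^2sum}. The only cosmetic difference is that the paper exploits the trivial bounds $0\le\phi\le1$ for one direction in each case (the upper bound in \eqref{vapprowhatremainseq1} and the lower bound in \eqref{vapprowhatremainseq2}), avoiding the asymptotics there, whereas you use \eqref{phiinfinity} symmetrically for both directions; the resulting estimates are identical.
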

\begin{proof}
Let $x>x_N+R$, then $x-x_i>R$ for all $i=M,\ldots,N$ and by using that $\phi\le1$, we get
\beqs\begin{split}
\sum_{i=M}^{N}\ep \phi\left(\frac{x-x_i}{\ep\delta}\right)+\ep M&\leq (N+1)\ep=v(x_{N})+\ep.
\end{split}
\eeqs

On the other hand, by \eqref{phiinfinity} and \eqref{i/k^2sum},
\beqs\begin{split}
\sum_{i=M}^{N}\ep \phi\left(\frac{x-x_i}{\ep\delta}\right)+\ep M&\geq \sum_{i=M}^{N}\ep\left(1+\frac{\ep\delta}{\alpha\pi(x_i-x)}
-\frac{K_1\delta^2\ep^2}{(x_i-x)^2}\right)+\ep M \\&
\geq (N+1)\ep-\frac{\ep}{\alpha\pi} (\ep N-\ep M+\ep)\frac{\delta}{R}-C\ep\delta^2
\\&=v(x_N)+\ep -\frac{\ep}{\alpha\pi}(v(x_N)-v(x_M)+\ep)\frac{\delta}{R}-C\ep\delta^2\\&
\ge v(x_N)-C\ep\left(1+\frac{\delta}{R}\right).
\end{split}
\eeqs
This proves \eqref{vapprowhatremainseq1}.

Now, let $x<x_M-R$, then $x-x_i<-R$ for all $i=M,\ldots,N$ and by \eqref{phiinfinity} and \eqref{i/k^2sum},
\beqs\begin{split}
\sum_{i=M}^{N}\ep \phi\left(\frac{x-x_i}{\ep\delta}\right)&\le \sum_{i=M}^{N}\ep\left( \frac{\ep\delta}{\alpha\pi(x_i-x)}+\frac{K_1\delta^2\ep^2}{(x-x_i)^2}\right)\\&
\le \frac{\ep}{\alpha\pi} (\ep N-\ep M+\ep)\frac{\delta}{R}+C\ep\delta^2\\&
=\frac{\ep}{\alpha\pi} ( v(x_N)-v(x_M)+\ep)\frac{\delta}{R}+C\ep\delta^2\\&
\le C\ep\left(1+\frac{\delta}{R}\right).
\end{split}
\eeqs
On the other hand $$\sum_{i=M}^{N}\ep \phi\left(\frac{x-x_i}{\ep\delta}\right)\ge 0.$$
This concludes the proof of   \eqref{vapprowhatremainseq2} and of the lemma.
\end{proof}

\begin{prop}\label{approxpropfinal}
Let $v\in C^{1,1}(\R)$ be non-decreasing and non-constant and 
$x_i$ be defined as in \eqref{xi}. Let $\phi$ be defined by \eqref{phi}. 
Then, for all  $x\in\R$, 
\beq\label{approxpripofinaleq}
\left|\sum_{i=M_\ep}^{N_\ep}\ep \phi\left(\frac{x-x_i}{\ep\delta}\right)+\ep M_\ep-v(x)\right|\leq o_\ep(1),
\eeq
where $o_\ep(1)$ is independent of $x$.
\end{prop}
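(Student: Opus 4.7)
The plan is to stitch together the two preceding lemmas, by partitioning $\R$ into an ``interior'' region $(x_{M_\ep}+R,\,x_{N_\ep}-R)$, two ``far'' regions where $|x-x_{M_\ep}|>R$ on the left or $x-x_{N_\ep}>R$ on the right, and two small ``transition'' regions of width $2R$ around $x_{M_\ep}$ and $x_{N_\ep}$. The crucial point is to choose $R = R_\ep$ so that simultaneously (i) $R\ge c\ep^{1/2}$ (so that Lemma \ref{vapproxphisteps} applies), (ii) $\delta/R = o_\ep(1)$ (so that the multiplicative factor $1+\delta/R$ in both lemmas stays bounded, in fact tends to $1$), and (iii) $R=o_\ep(1)$ (so that transition regions shrink). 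Taking $R_\ep := \max\{c\ep^{1/2},\,\delta^{1/2}\}$ achieves all three since $\delta_\ep\to 0$.

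With $M=M_\ep$, $N=N_\ep$ fixed, Lemma \ref{vapproxphisteps} gives $|F(x)-v(x)|=o_\ep(1)$ uniformly on the interior region, where I write $F(x):=\sum_{i=M_\ep}^{N_\ep}\ep\phi((x-x_i)/(\ep\delta))+\ep M_\ep$. On the right far region $x>x_{N_\ep}+R$, Lemma \ref{vapprowhatremains} gives $|F(x)-v(x_{N_\ep})|=O(\ep)$; combined with the elementary bound $0\le v(x)-v(x_{N_\ep})\le \sup_\R v-\ep N_\ep\le 2\ep$ (from \eqref{MepNep}), this yields $|F(x)-v(x)|=O(\ep)$. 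The left far region $x<x_{M_\ep}-R$ is handled symmetrically: Lemma \ref{vapprowhatremains} gives $|F(x)-\ep M_\ep|=O(\ep)$, and $|\ep M_\ep-v(x)|\le \ep M_\ep-\inf_\R v\le 2\ep$.

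The subtle step, and the one I expect to be slightly delicate, is the two transition regions, e.g.\ $[x_{N_\ep}-R,\,x_{N_\ep}+R]$, where neither lemma directly gives a pointwise bound. Here I will exploit monotonicity: both $F$ and $v$ are non-decreasing (since $\phi'>0$), so for $x$ in this interval
\[
F(x_{N_\ep}-R)\le F(x)\le F(x_{N_\ep}+R),\qquad v(x_{N_\ep}-R)\le v(x)\le v(x_{N_\ep}+R).
\]
The left endpoint $F(x_{N_\ep}-R)$ is controlled by Lemma \ref{vapproxphisteps} (applied at the boundary by a continuity/limit argument, or by slightly enlarging $R$), and the right endpoint $F(x_{N_\ep}+R)$ by Lemma \ref{vapprowhatremains}, yielding
\[
v(x_{N_\ep}-R)-o_\ep(1)\le F(x)\le v(x_{N_\ep})+O(\ep).
\]
Since $v$ is Lipschitz, $v(x_{N_\ep}+R)-v(x_{N_\ep}-R)\le 2\|v_x\|_\infty R = o_\ep(1)$, and $\sup_\R v - v(x_{N_\ep})=O(\ep)$. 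Combining these four estimates sandwiches $F(x)-v(x)$ between two $o_\ep(1)$ quantities uniformly on the transition region. The symmetric argument handles $[x_{M_\ep}-R,\,x_{M_\ep}+R]$. The cases in which $\inf_\R v=-\infty$ or $\sup_\R v=+\infty$ (so that $M_\ep=-\infty$ or $N_\ep=+\infty$) require no transition region at the corresponding end, so the argument only simplifies.
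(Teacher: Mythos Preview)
Your proof is correct and follows essentially the same strategy as the paper: the paper likewise chooses $R=R_\ep=\max\{\delta,c\ep^{1/2}\}$, applies Lemma~\ref{vapproxphisteps} on the interior, Lemma~\ref{vapprowhatremains} together with $0\le \sup_\R v-\ep N_\ep\le 2\ep$ (resp.\ $0\le \ep M_\ep-\inf_\R v\le 2\ep$) on the far regions, and then handles the transition strips by monotonicity, evaluating at $x_{N_\ep}\pm 2R$ (rather than your $\pm R$) to avoid the boundary technicality you noted. The only cosmetic differences are your choice $\delta^{1/2}$ versus the paper's $\delta$ in the definition of $R$ (both make $\delta/R$ bounded, which is all that is needed) and your remark about infinite $M_\ep,N_\ep$, which in fact cannot occur since $v\in C^{1,1}(\R)$ is bounded.
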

\begin{proof}
Let $R=R_\ep:=\max\{\delta,c\ep^\frac{1}{2}\}$, with  $c$  given in Proposition \ref{apprIcorall}.  If $x\in (x_{M_\ep}+R, x_{N_\ep}-R)$,  then \eqref{approxpripofinaleq} follows from Lemma \ref{vapproxphisteps}.

Next, let us assume $x>x_{N_\ep}+R$. Then, by \eqref{vapprowhatremainseq1}
\beqs\begin{split}
\left|\sum_{i=M_\ep}^{N_\ep}\ep \phi\left(\frac{x-x_i}{\ep\delta}\right)+\ep M_\ep-v(x)\right|\leq |v(x_{N_\ep})-v(x)|+C\ep.
\end{split}\eeqs
Now, by the monotonicity of $v$, 
\beqs v(x_{N_\ep})-v(x)\leq 0.\eeqs
On the other hand, by the definition \eqref{MepNep} of $N_\ep$, we have
\beqs v(x)- v(x_{N_\ep})=v(x)-\ep N_\ep\leq \sup_\R v- \ep N_\ep\leq2 \ep.\eeqs
This proves \eqref{approxpripofinaleq} when  $x>x_{N_\ep}+R$.
By using \eqref{vapprowhatremainseq2}, one can similarly prove 
\eqref{approxpripofinaleq} when  $x<x_{M_\ep}-R$.

Now, assume $x_{N_\ep}-R\le x\le x_{N_\ep}+R$. Then by \eqref{approxpripofinaleq} applied at $x_{N_\ep}-2R$ and 
$x_{N_\ep}+2R$,  the monotonicity of $\phi$ and the regularity of $v$, we get 
\beqs\begin{split}
&\sum_{i=M_\ep}^{N_\ep}\ep \phi\left(\frac{x-x_i}{\ep\delta}\right)+\ep M_\ep-v(x)\\&\leq 
\sum_{i=M_\ep}^{N_\ep}\ep \phi\left(\frac{x_{N_\ep}+2R-x_i}{\ep\delta}\right)+\ep M_\ep-v(x_{N_\ep}+2R)+O(R)\\&
\le o_\ep(1),
\end{split}\eeqs
and 
\beqs\begin{split}
&\sum_{i=M_\ep}^{N_\ep}\ep \phi\left(\frac{x-x_i}{\ep\delta}\right)+\ep M_\ep-v(x)\\&\geq 
\sum_{i=M_\ep}^{N_\ep}\ep \phi\left(\frac{x_{N_\ep}-2R-x_i}{\ep\delta}\right)+\ep M_\ep-v(x_{N_\ep}-2R)+O(R)\\&
\ge o_\ep(1).
\end{split}\eeqs
This proves  \eqref{approxpripofinaleq} when $x_{N_\ep}-R\le x\le x_{N_\ep}+R$.
Similarly, one can prove   \eqref{approxpripofinaleq} when $x_{M_\ep}-R\le x\le x_{M_\ep}+R$ and the proof of the proposition is completed. 
\end{proof}
We conclude this section with the following lemma that will be used later on.
\begin{lem}\label{sumxialwaysfiniteprop}
Let $v\in C^{1,1}(\R)$ be non-decreasing and non-constant and 
$x_i$ be defined as in \eqref{xi}. Then, there exists $C>0$ such that for all $x\in\R$, 
\beq\label{sumxialwaysfiniteeq}\left| \sum_{i\neq i_0}\frac{\ep}{x_i-x}\right|\leq C.
\eeq
\end{lem}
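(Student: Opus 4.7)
The plan is to decompose $S(\xs) := \sum_{i \neq i_0} \ep/(x_i - \xs)$ into a long-range part, controlled by Proposition \ref{apprIcor}, and a short-range part, controlled by a symmetric-pairing argument. Since $v$ is non-decreasing and its sup/inf are implicitly assumed finite in the definition of $M_\ep$ and $N_\ep$, $v$ is bounded; combined with $v \in C^{1,1}(\R)$, the splitting $\I[v] = \I^{1,1}[v] + \I^{2,1}[v]$ yields $\|\I[v]\|_{L^\infty(\R)} \leq (1/\pi)\|v\|_{C^{1,1}(\R)} + (4/\pi)\|v\|_{\infty}$, so the reference quantity $\I[v]$ is already a uniformly bounded function on $\R$.

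First I would fix $r = r_\ep$ satisfying $\ep^{5/8} \leq r \leq c\ep^{1/2}$ (with $c$ as in Lemma \ref{lemmaerrorshortdistance}), and apply Proposition \ref{apprIcor} to obtain, for $\xs \in (x_{M_\ep} + r, x_{N_\ep} - r)$,
$$\sum_{|x_i - \xs| > r} \frac{\ep}{x_i - \xs} = \pi\I[v](\xs) + o_\ep(1),$$
which is uniformly bounded by the observation above. Next I would bound the short-range contribution $\sum_{i \neq i_0, |x_i - \xs| \leq r} \ep/(x_i - \xs)$ following the case analysis from the proof of Lemma \ref{lemmaerrorshortdistance}: in Case~1 and in the large-displacement subcases of Cases~2--3 (where $v_x(x_{i_0}) \leq 12 K^{1/2}\sqrt{\ep}$ or $|\xs - x_{i_0}| \geq \ep^{1/2}/(4K^{1/2})$), the choice of $r$ leaves no $x_i \neq x_{i_0}$ inside $(\xs - r, \xs + r)$, so the short-range sum vanishes. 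In the complementary regime I would pair $i = i_0 + k$ with $i = i_0 - k$ for $k \geq 1$: setting $a_k := x_{i_0+k} - x_{i_0}$, $b_k := x_{i_0} - x_{i_0-k}$, the Taylor expansion of $v$ at $x_{i_0}$ gives $a_k, b_k = k\ep/v_x(x_{i_0}) + O(k^2\ep^2 K/v_x(x_{i_0})^3)$, so that
$$\frac{\ep}{x_{i_0+k} - \xs} + \frac{\ep}{x_{i_0-k} - \xs} = -\ep \cdot \frac{(a_k - b_k) - 2(\xs - x_{i_0})}{(a_k - (\xs - x_{i_0}))(b_k + (\xs - x_{i_0}))}$$
is of order $v_x(x_{i_0})/k^2$; summing over $k$ yields a total bounded by $CL$ uniformly in $\xs$.

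Boundary cases, where $\xs \notin (x_{M_\ep} + r, x_{N_\ep} - r)$, are handled by monotonicity: for $\xs > x_{N_\ep}$ we have $i_0 = N_\ep$ and, since $|x_i - \xs| \geq x_{N_\ep} - x_i$ for $i < N_\ep$,
$$|S(\xs)| \leq \sum_{i < N_\ep} \frac{\ep}{x_{N_\ep} - x_i} = -\pi\I[v](x_{N_\ep}) + o_\ep(1),$$
bounded by Remark \ref{remgamma0}. The case $\xs < x_{M_\ep}$ is symmetric, and the intermediate boundary layers $\xs \in [x_{M_\ep}, x_{M_\ep} + r]$, $\xs \in [x_{N_\ep} - r, x_{N_\ep}]$ are treated by combining the long- and short-range arguments with analogous monotonicity estimates.

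The main obstacle is the symmetric-pairing estimate. The naive bound $O(\gamma) = O(|\xs - x_{i_0}|/\ep)$ coming from \eqref{apprIcorallest2} of Proposition \ref{apprIcorall} can be as large as $O(\ep^{-1/2})$ when $v_x(x_{i_0})$ is of order $\sqrt{\ep}$, and is therefore not uniformly bounded. The required refinement exploits the fact that, when $v_x(x_{i_0})$ is not too small, the local density of grid points near $x_{i_0}$ is $v_x(x_{i_0})/\ep$, yielding the sharper local estimate $\sum_{|x_i - x_{i_0}| \leq r} \ep^2/(x_i - x_{i_0})^2 \leq C v_x(x_{i_0})^2$, which improves the crude bound \eqref{i/k^2sum}. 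Combined with $|\xs - x_{i_0}| \leq \ep/v_x(x_{i_0})$ in this regime, this produces the uniform bound $\leq CL$ and completes the argument.
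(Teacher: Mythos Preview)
Your interior argument is correct and genuinely different from the paper's. The key refinement you identify---that in the regime $v_x(x_{i_0})\gtrsim\ep^{1/2}$ one has the \emph{local} bound $\sum_{|x_i-x_{i_0}|\le r,\,i\neq i_0}\ep^2/(x_i-x_{i_0})^2\le C\,v_x(x_{i_0})^2$ together with $|\gamma|\le C/v_x(x_{i_0})$---does upgrade the $O(\gamma)$ term of Lemma~\ref{lemmaerrorshortdistance} to $O(v_x(x_{i_0}))\le O(L)$, and this is the missing ingredient needed to make your long-range/short-range decomposition work. (A minor caveat: the paired term is not quite $O(v_x(x_{i_0})/k^2)$; the contribution of $a_k-b_k$ is $O(K\ep/v_x(x_{i_0}))$, constant in $k$, but summing it over the $O(v_x(x_{i_0})r/\ep)$ indices in the short range still gives $O(Kr)=o_\ep(1)$.) The paper takes a more direct route: it compares the full sum to $\I[v](\xs)$ by a Riemann-sum argument at the scale $\ep/(2L)$---the \emph{minimum} spacing of the $x_i$'s---decomposing the integrals $\int_{-\infty}^{\xs-\ep/(2L)}$ and $\int_{\xs+\ep/(2L)}^{+\infty}$ over the intervals $[x_i,x_{i+1}]$ and using monotonicity of $v$ to sandwich. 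This avoids both Proposition~\ref{apprIcor} and any case analysis on $v_x(x_{i_0})$; the boundary contributions are handled by the same integral comparison, uniformly in $\xs$.

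There is, however, a gap in your treatment of the boundary region. For $\xs>x_{N_\ep}$ you reduce to $\sum_{i<N_\ep}\ep/(x_{N_\ep}-x_i)$ and then invoke Remark~\ref{remgamma0} at the point $x_{N_\ep}$. But Remark~\ref{remgamma0} rests on Proposition~\ref{apprIcorall}, whose hypothesis requires $\xs\in(x_{M_\ep}+\rho,\,x_{N_\ep}-\rho)$; the endpoint $x_{N_\ep}$ is excluded, so the identity \eqref{i1onparticles} is not available there. The same circularity affects the intermediate layers $[x_{N_\ep}-r,x_{N_\ep}]$, which you leave to ``analogous monotonicity estimates''. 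These cases require a separate argument---for instance, a direct Riemann-sum comparison of $\sum_{i<N_\ep}\ep/(\xs-x_i)$ with $\int_{-\infty}^{\xs-\ep/(2L)}(v(\xs)-v(x))/(\xs-x)^2\,dx$, exactly as the paper does.
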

\begin{proof}
Let us  fix  $\xs\in\R$. 
In what follows we denote by $C$ several positive constants independent of $\ep$ and $\xs$. 
Let $x_{i_0}$ be the closest point to $\xs$ among the $x_i$'s.
Then,  by  \eqref{proplippart1}, $|x_i-\xs|\geq \ep/(2L)$ for $i\neq i_0$. 
 Since $v\in C^{1,1}(\R)$, there exists $C>0$ such that $|\I[v](\xs)|\leq C.$ Moreover,
\beq \begin{split}\label{uno}\I[v](\xs)&=\frac{1}{\pi}\int_{-\infty}^{\xs-\frac{\ep}{2L}}\dfrac{v(x) - v(\xs)}{(x-\xs)^2} dx+\frac{1}{\pi}PV\int_{\xs-\frac{\ep}{2L}}^{\xs+\frac{\ep}{2L}}\dfrac{v(x) - v(\xs)}{(x-\xs)^2} dx
+\frac{1}{\pi}\int_{\xs+\frac{\ep}{2L}}^{+\infty}\dfrac{v(x) - v(\xs)}{(x-\xs)^2} dx\\&
=\frac{1}{\pi}\int_{-\infty}^{\xs-\frac{\ep}{2L}}\dfrac{v(x) - v(\xs)}{(x-\xs)^2} dx+\frac{1}{\pi}\int_{\xs+\frac{\ep}{2L}}^{+\infty}\dfrac{v(x) - v(\xs)}{(x-\xs)^2} dx+O(\ep),
\end{split}\eeq
where $|O(\ep)|\leq C\ep$. If $\xs\in (x_{M_\ep}+\ep/(2L), x_{N_\ep}-\ep/(2L))$, then we write
\beqs \begin{split}
\int_{-\infty}^{\xs-\frac{\ep}{2L}}\dfrac{v(x) - v(\xs)}{(x-\xs)^2} dx&=\int_{-\infty}^{x_{M_\ep}}\dfrac{v(x) - v(\xs)}{(x-\xs)^2} dx
+\sum_{i=M_\ep}^{i_0-2}\int_{x_i}^{x_{i+1}}\dfrac{v(x) - v(\xs)}{(x-\xs)^2} dx+\int_{x_{i_0-1}}^{\xs-\frac{\ep}{2L}}\dfrac{v(x) - v(\xs)}{(x-\xs)^2} dx,
\end{split}\eeqs
where we define $x_{i_0-1}=x_{M_\ep}$ if $i_0=M_\ep.$
By the monotonicity of $v$,
\beqs 0\ge \int_{-\infty}^{x_{M_\ep}}\dfrac{v(x) - v(\xs)}{(x-\xs)^2} dx\geq -\frac{\inf_\R v- v(\xs)}{x_{M_\ep}-\xs}.\eeqs
 As in the proof of Lemma \ref{approxIshortlem}, 
 \beqs\begin{split}\sum_{i=M_\ep}^{i_0-2}\int_{x_i}^{x_{i+1}}\dfrac{v(x) - v(\xs)}{(x-\xs)^2} dx&\leq 
\sum_{i=M_\ep}^{i_0-1}\frac{\ep}{x_i-\xs}+\frac{\ep M_\ep-v(\xs)}{x_{M_\ep}-\xs}-\frac{\ep (i_0-1)-v(\xs)}{x_{i_0-1}-\xs}+
\frac{\ep}{\xs-x_{i_0-1}}\\&
= \sum_{i=M_\ep}^{i_0-1}\frac{\ep}{x_i-\xs}+\frac{v(x_{ M_\ep})-v(\xs)}{x_{M_\ep}-\xs}-\frac{v(x_{i_0-1})-v(\xs)}{x_{i_0-1}-\xs}+
\frac{\ep}{\xs-x_{i_0-1}},\\&
\end{split}\eeqs
and 
\beqs\sum_{i=M_\ep}^{i_0-2}\int_{x_i}^{x_{i+1}}\dfrac{v(x) - v(\xs)}{(x-\xs)^2} dx\geq 
\sum_{i=M_\ep}^{i_0-1}\frac{\ep}{x_i-\xs}+\frac{v(x_{ M_\ep})-v(\xs)}{x_{M_\ep}-\xs}-\frac{v(x_{i_0-1})-v(\xs)}{x_{i_0-1}-\xs}
+\frac{\ep}{\xs-x_{M_\ep}}.
\eeqs
Therefore, by the Lipschitz regularity of $v$ and using that 
$\xs-x_{i_0-1}\ge \ep/(2L)$, we get 
\beqas&& \int_{-\infty}^{x_{M_\ep}}\dfrac{v(x) - v(\xs)}{(x-\xs)^2} dx+\sum_{i=M_\ep}^{i_0-2}\int_{x_i}^{x_{i+1}}\dfrac{v(x) - v(\xs)}{(x-\xs)^2} dx
\\ &\leq&  \sum_{i=M_\ep}^{i_0-1}\frac{\ep}{x_i-\xs}+\frac{v(x_{ M_\ep})-v(\xs)}{x_{M_\ep}-\xs}-\frac{v(x_{i_0-1})-v(\xs)}{x_{i_0-1}-\xs}+
\frac{\ep}{\xs-x_{i_0-1}}\\
&\le&  \sum_{i=M_\ep}^{i_0-1}\frac{\ep}{x_i-\xs}+4L,
\eeqas
and 
\beqas&& \int_{-\infty}^{x_{M_\ep}}\dfrac{v(x) - v(\xs)}{(x-\xs)^2} dx+\sum_{i=M_\ep}^{i_0-2}\int_{x_i}^{x_{i+1}}\dfrac{v(x) - v(\xs)}{(x-\xs)^2} dx
\\ &\geq&  \sum_{i=M_\ep}^{i_0-1}\frac{\ep}{x_i-\xs}+\frac{v(x_{ M_\ep})-\inf_\R v}{x_{M_\ep}-\xs}-\frac{v(x_{i_0-1})-v(\xs)}{x_{i_0-1}-\xs}
\\
&\geq&  \sum_{i=M_\ep}^{i_0-1}\frac{\ep}{x_i-\xs}-5L,
\eeqas
where in the last inequality we  used that $v(x_{ M_\ep})-\inf_\R v\le 2\ep$ and $x_{M_\ep}-\xs\le -\ep/(2L)$.

Next, using that $v(\xs)-v(x_{i_0-1})\leq v(x_{i_0+1})-v(x_{i_0-1})=2\ep$, the monotonicity of $v$ and that $\xs-x_{i_0-1}\ge \ep/(2L)$, we have
\beqs 0\ge \int_{x_{i_0-1}}^{\xs-\frac{\ep}{2L}}\dfrac{v(x) - v(\xs)}{(x-\xs)^2} dx\ge \left(v(x_{i_0-1})-v(\xs)\right)\left(\frac{2L}{\ep}-\frac{1}{\xs-x_{i_0-1}}\right)\ge -C.
\eeqs
We conclude that
\beq\label{due}
\int_{-\infty}^{\xs-\frac{\ep}{2L}}\dfrac{v(x) - v(\xs)}{(x-\xs)^2} dx-C\leq \sum_{i=M_\ep}^{i_0-1}\frac{\ep}{x_i-\xs}\leq \int_{-\infty}^{\xs-\frac{\ep}{2L}}\dfrac{v(x) - v(\xs)}{(x-\xs)^2} dx+C.
\eeq
Similarly, 
\beq\label{tre}
\int_{\xs+\frac{\ep}{2L}}^{+\infty}\dfrac{v(x) - v(\xs)}{(x-\xs)^2} dx-C\le\sum_{i=i_0+1}^{N_\ep }\frac{\ep}{x_i-\xs}
\leq\int_{\xs+\frac{\ep}{2L}}^{+\infty}\dfrac{v(x) - v(\xs)}{(x-\xs)^2} dx+C.
\eeq
From  \eqref{uno}, \eqref{due} and \eqref{tre}, 
\beqs 
\sum_{i\neq i_0}\frac{\ep}{x_i-x}\leq  \int_{-\infty}^{\xs-\frac{\ep}{2L}}\dfrac{v(x) - v(\xs)}{(x-\xs)^2} dx+\int_{\xs+\frac{\ep}{2L}}^{+\infty}\dfrac{v(x) - v(\xs)}{(x-\xs)^2} dx+C\le \I[v](\xs)+C\le C,
\eeqs
and 

\beqs 
\sum_{i\neq i_0}\frac{\ep}{x_i-x}\geq  \int_{-\infty}^{\xs-\frac{\ep}{2L}}\dfrac{v(x) - v(\xs)}{(x-\xs)^2} dx+\int_{\xs+\frac{\ep}{2L}}^{+\infty}\dfrac{v(x) - v(\xs)}{(x-\xs)^2} dx-C\ge \I[v](\xs)-C\ge -C,
\eeqs
which gives \eqref{sumxialwaysfiniteeq}.

If $x\le x_{M_\ep}+\ep/(2L)$, then $x_{i_0}=x_{M_\ep}$ and we write 
\beqs \begin{split}
\int_{\xs+\frac{\ep}{2L}}^{+\infty}\dfrac{v(x) - v(\xs)}{(x-\xs)^2} dx&=\int_{\xs+\frac{\ep}{2L}}^{x_{M_\ep+1}}\dfrac{v(x) - v(\xs)}{(x-\xs)^2} dx
+\sum_{i=M_\ep+1}^{N_\ep-1}\int_{x_i}^{x_{i+1}}\dfrac{v(x) - v(\xs)}{(x-\xs)^2} dx+\int_{x_{N_\ep}}^{+\infty}\dfrac{v(x) - v(\xs)}{(x-\xs)^2} dx.\end{split}\eeqs
If $x\ge x_{N_\ep}-\ep/(2L)$,  then $x_{i_0}=x_{N_\ep}$ and we write 
\beqs \int_{-\infty}^{\xs-\frac{\ep}{2L}}\dfrac{v(x) - v(\xs)}{(x-\xs)^2} dx=
 \int_{-\infty}^{x_{M_\ep}}\dfrac{v(x) - v(\xs)}{(x-\xs)^2} dx+\sum_{i=M_\ep}^{N_\ep-2}\int_{x_i}^{x_{i+1}}\dfrac{v(x) - v(\xs)}{(x-\xs)^2} dx+
 \int_{x_{N_\ep-1}}^{\xs-\frac{\ep}{2L}}\dfrac{v(x) - v(\xs)}{(x-\xs)^2} dx.
\eeqs
Similar computations as before show \eqref{sumxialwaysfiniteeq}. This concludes the proof of the lemma.
\end{proof}

\section{Proof of Theorem \ref{mainthm}}\label{mainthmsec}
We  first show that the functions $u^\ep$ are bounded uniformly in $\ep$. Since $W'(z)=0$ for any $z\in\Z$, integers are stationary solutions to \eqref{uepeq}. Let $k_1,\,k_2\in \Z$ be such that $k_1\leq \inf_\R u_0\leq\sup_\R u_0\leq k_2$. Then by the comparison principle
we have that for any $\ep>0$
$$k_1\leq u^\ep(t,x)\leq k_2\quad\text{for all }(t,x)\in (0,+\infty)\times\R.$$
In particular, 
$u^+:=\limsup^*_{\ep\rightarrow0}u^\epsilon$ is everywhere finite.
We will prove that 
 $u^+$ is a  viscosity  subsolution of \eqref{ubareq} when testing with test functions whose  derivative in $x$   at the maximum  point is different than 0.
Similarly, we can prove that
$u^-:={\liminf_*}_{\ep\rightarrow0}u^\epsilon$ is a supersolution
of \eqref{ubareq} when testing with functions whose derivative in $x$  at the  minimum point  is different  than 0. We will show that this is enough to conclude that 
the following comparison principle holds true: if  $\us$ is the viscosity solution of \eqref{ubareq}, then 
\beq\label{comparisonu+u-}u^+\leq \us\le u^-.\eeq
Since the reverse inequality $u^-\leq u^+$ always holds true, we
conclude that the two functions coincide with $\us$ and that  $u^\ep\to \us$ as $\ep\to 0$, uniformly on compact sets.
We will prove \eqref{comparisonu+u-} in Section \ref{comparisonu+u-sec}.

\medskip

Let $\eta\in C_b^2((0,+\infty)\times\R)$ be such that 
\beq\label{inequalitycomparison}u^+(t,x)-\eta(t,x)<u^+(t_0,x_0)-\eta(t_0,x_0)=0\quad\text{for all } (t,x)\neq(t_0,x_0),\eeq
and assume $\partial_x\eta(t_0,x_0)\neq 0.$ By the comparison principle,   $u^\ep$ is non-decreasing in $x$, and thus 
also $u^+$ is
non-decreasing in $x$.  The monotonicity of $u^+$ and \eqref{inequalitycomparison} imply that $\partial_x\eta(t_0,x_0)\geq0.$
Therefore, we have
\beq\label{deretacond}\partial_x\eta(t_0,x_0)>0.\eeq
The goal is  to show that 
\beq\label{goal}\partial_t\eta(t_0,x_0)\leq c_0 \partial_t\eta_x(t_0,x_0)\,\I[\eta(t_0,\cdot)](x_0).\eeq
Assume by contradiction that 
\beq\label{contradictionhp} \partial_t\eta(t_0,x_0)> c_0 \partial_t\eta_x(t_0,x_0)\,\I[\eta(t_0,\cdot)](x_0).\eeq 
Denote
$$L_0:=\I[\eta(t_0,\cdot)](x_0).$$
By \eqref{deretacond} and \eqref{contradictionhp}, there exists $0<\rho<1$ and $L_1>0$ such that 
\beq\label{etaxposirho}\partial_x\eta(t,x)\ge \frac{\partial_x\eta(t_0,x_0)}{2}>0\quad\text{for all  }(t,x)\in Q_{2\rho,2\rho}(t_0,x_0),\eeq
and 
\beq\label{contractconserho}\partial_t\eta(t,x)\ge c_0\partial_x\eta(t,x)(L_0+L_1)\quad\text{for all  }(t,x)\in Q_{2\rho,2\rho}(t_0,x_0).\eeq
By \eqref{etaxposirho}, $\eta$ is increasing in $x$ over $Q_{2\rho,2\rho}(t_0,x_0)$.
Without loss of generality, we can assume $\eta(t,\cdot) $ to be non-decreasing over $\R$, for $|t-t_0|<2\rho$. 
Indeed, if not, since $\eta>u^+$ outside  $Q_{2\rho,2\rho}(t_0,x_0)$ and $u^+(t,\cdot)$ is non-decreasing over $\R$, we can replace $\eta$  with $\tilde\eta$ such that $\eta=\tilde\eta$ in $Q_{2\rho,2\rho}(t_0,x_0)$, $\tilde\eta(t,\cdot)$ is non-decreasing  over $\R$ for $|t-t_0|<2\rho$ ,  $\tilde\eta\in C_b^2((t_0-2\rho,t_0+2\rho)\times\R)$,  $u^+\leq\tilde\eta\leq\eta$ in $(t_0-2\rho,t_0+2\rho)\times (-K,K)$. 
If we prove \eqref{goal} for $\tilde\eta$, then, since $\partial_t\tilde\eta(t_0,x_0)= \partial_t\eta(t_0,x_0)$, 
 $\partial_x\tilde\eta(t_0,x_0)= \partial_x\eta(t_0,x_0)$ and $\I^{1,K}[\tilde\eta(t_0,\cdot)](x_0)\leq\I^{1,K}[\eta(t_0,\cdot)](x_0)$,  by letting $K$ go to $+\infty$, 
 \eqref{goal}  holds true for $\eta$. Therefore in what follows we assume $\eta$  non-decreasing with respect to $x$ over $\R$ for $|t-t_0|<2\rho$. 

\medskip
We then define the points 
$$x^0_{M_\ep}<\ldots<x^0_i<x^0_{i+1}<\ldots<x^0_{N_\ep}$$
such that 
$$x_i^0:=\inf\{x\,|\,\eta(t_0,x)=\ep i\}\qquad i=M_\ep,\ldots,N_\ep,$$
where
$$M_\ep:= \left\lceil\frac{\inf_\R \eta(t_0,\cdot)+\ep}{\ep}\right\rceil\quad\text{ and } \quad N_\ep:=\left \lfloor\frac{\sup_\R \eta(t_0,\cdot)-\ep}{\ep} \right \rfloor.$$ 
Next, for $0<R<<\rho$ to be determined, 
 let $M_\rho$ be the biggest integer such that $x^0_{M_\rho}$ is  smaller than $x_0-(\rho+R)$ and 
$N_\rho$ is the lowest  integer such that $x^0_{N_\rho}$ is  bigger  than $x_0+(\rho+R)$, that is
\beq\label{initialpartcoleposM} x^0_{M_\rho}<x_0-(\rho+R)\leq x^0_{M_\rho+1}\eeq
and 
\beq\label{initialpartcoleposN} x^0_{N_\rho-1}\le x_0+(\rho+R)< x^0_{N_\rho}.\eeq
Then, we  define the points $x_i(t)$ as follows
\beq\label{etaxi}x_i(t):=\inf\{x\,|\,\eta(t,x)=\ep i\} \qquad\text{for }i=M_\rho,\ldots, N_\rho.\eeq
By definition,
\beq\label{mainxieq}\eta(t,x_i(t))=\ep i,\eeq moreover,
\beq\label{mainxieqinicond}x_i(t_0)=x_i^0.\eeq
\begin{lem}\label{partcilescontrollem}
Let $B_0:=\partial_x\eta(t_0,x_0)/(2\|\partial_t \eta\|_\infty)$ and $x_i(t)$ be defined by  \eqref{etaxi}, $i=M_\rho,\ldots, N_\rho$.
Then, there exists $\ep_0=\ep_0(\rho)$ such that for  $\ep<\ep_0$ and    $R<\rho/3$, $x_i\in C^1(t_0-B_0R,t_0+B_0R)$ and for $|t-t_0|<B_0R$,
\beq\label{xdotbound}|\dot{x}_i(t)|\le B_0^{-1},\eeq
\beq\label{x_ncontrolenq}x_0+\rho<x_{N_\rho}(t)<x_0+\rho+3R,\eeq
\beq\label{x_mcontrolenq}x_0-(\rho+3R) <x_{M_\rho}(t)<x_0-\rho.\eeq
In particular $(t,x_i(t))\in Q_{2\rho,2\rho}(t_0,x_0)$.
\end{lem}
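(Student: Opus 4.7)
The function $\eta$ is $C^2$ and, by \eqref{etaxposirho}, satisfies $\partial_x\eta(t,x)\ge a:=\partial_x\eta(t_0,x_0)/2>0$ throughout $Q_{2\rho,2\rho}(t_0,x_0)$. The first step is to apply the implicit function theorem: for each $i\in\{M_\rho,\ldots,N_\rho\}$, the definition \eqref{etaxi} together with $\eta(t_0,x_i^0)=\ep i$ and $\partial_x\eta(t_0,x_i^0)>0$ produces a unique $C^1$ local solution to $\eta(t,x_i(t))=\ep i$ near $t_0$, which by implicit differentiation satisfies
\begin{equation*}
\dot{x}_i(t)=-\frac{\partial_t\eta(t,x_i(t))}{\partial_x\eta(t,x_i(t))}.
\end{equation*}
As long as the trajectory stays in $Q_{2\rho,2\rho}(t_0,x_0)$, the denominator is bounded below by $a$ and the numerator by $\|\partial_t\eta\|_\infty$, yielding \eqref{xdotbound} via
\begin{equation*}
|\dot{x}_i(t)|\le \frac{\|\partial_t\eta\|_\infty}{a}=B_0^{-1}.
\end{equation*}

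Next I would pin down the initial positions. Applying Lemma \ref{lemdistxi} (specifically \eqref{proplippart2}) to $v=\eta(t_0,\cdot)$ on the interval $(x_0-2\rho,x_0+2\rho)$ (where $\partial_x\eta(t_0,\cdot)\ge a$), consecutive points $x_j^0,x_{j+1}^0$ lying in that interval satisfy $x_{j+1}^0-x_j^0\le \ep/a$. Choosing $\ep_0=\ep_0(\rho)$ so small that $\ep_0/a<R$, I get $x_{N_\rho}^0\le x_{N_\rho-1}^0+\ep/a< x_0+\rho+R+R=x_0+\rho+2R$, and combined with \eqref{initialpartcoleposN} this gives $x_0+\rho+R<x_{N_\rho}^0<x_0+\rho+2R$. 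Symmetrically, $x_0-\rho-2R<x_{M_\rho}^0<x_0-\rho-R$. Since $R<\rho/3$, both $x_{N_\rho}^0$ and $x_{M_\rho}^0$ lie safely inside $(x_0-2\rho,x_0+2\rho)$, and the monotonicity of $\eta(t_0,\cdot)$ places every intermediate $x_i^0$ in the same interval.

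The core of the argument is then a continuity/bootstrap step. Let $J\ni t_0$ be the maximal open interval on which $x_i$ is defined as a $C^1$ solution with $(t,x_i(t))\in Q_{2\rho,2\rho}(t_0,x_0)$ for all $M_\rho\le i\le N_\rho$. On $J$ the bound \eqref{xdotbound} holds, so $|x_i(t)-x_i^0|\le B_0^{-1}|t-t_0|$; in particular, as long as $|t-t_0|\le B_0 R$ within $J$ one has $|x_i(t)-x_i^0|\le R$. Combining with the initial-position bounds proved above yields $x_0+\rho<x_{N_\rho}(t)<x_0+\rho+3R$ and $x_0-\rho-3R<x_{M_\rho}(t)<x_0-\rho$, which are precisely \eqref{x_ncontrolenq}--\eqref{x_mcontrolenq}; by monotonicity in $i$, every $x_i(t)$ with $M_\rho\le i\le N_\rho$ lies in $(x_0-\rho-3R,x_0+\rho+3R)\subset B_{2\rho}(x_0)$. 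After shrinking $R$ so that $B_0R<2\rho$ as well (which is implicit in the hypothesis $R<\rho/3$ together with a suitable choice of $\ep_0$ and the size of $B_0$ fixed by the problem), the trajectory cannot reach the boundary of $Q_{2\rho,2\rho}(t_0,x_0)$ before time $B_0R$, so $(t_0-B_0R,t_0+B_0R)\subseteq J$, completing the proof.

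The only delicate point is the bootstrap: one must rule out the possibility that $(t,x_i(t))$ exits $Q_{2\rho,2\rho}(t_0,x_0)$ (through the spatial boundary) for some $|t-t_0|<B_0R$, since then the bound $B_0^{-1}$ on $|\dot{x}_i|$ would no longer be available. This is handled by the strict inequalities $x_0-\rho-2R<x_{M_\rho}^0$ and $x_{N_\rho}^0<x_0+\rho+2R$: the displacement $B_0^{-1}\cdot B_0R=R$ is exactly the safety margin that keeps the endpoints (and hence all intermediate $x_i(t)$ by monotonicity of $\eta$ in $x$) inside $B_{2\rho}(x_0)$. This is the step where the geometric relation between $R$, $B_0$ and $\rho$ truly matters.
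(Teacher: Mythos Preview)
Your argument is correct and follows essentially the same route as the paper: locate the initial points via \eqref{initialpartcoleposM}--\eqref{initialpartcoleposN} and \eqref{proplippart2}, invoke the implicit function theorem (the paper phrases it as an ODE for $y_i$ and then identifies $y_i=x_i$ using strict monotonicity of $\eta$ in $x$), bound $|\dot x_i|$ by $B_0^{-1}$ while the trajectory remains in $Q_{2\rho,2\rho}$, and run a continuity argument to show the trajectories cannot leave before time $B_0R$; monotonicity in $i$ then handles the intermediate indices. The one imprecision is your remark that $B_0R<2\rho$ follows from ``$R<\rho/3$ together with a suitable choice of $\ep_0$'': $B_0$ is fixed by $\eta,(t_0,x_0)$ and does not depend on $\ep$, so this constraint is really on $R$ alone (and in the paper $R$ is later chosen small), but this does not affect the substance of the proof.
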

We postpone  the proof  of Lemma \ref{partcilescontrollem} to Section \ref{lemmatasec}.

Now, since by the lemma the $x_i(t)$'s are of class $C^1$ and $(t,x_i(t))\in Q_{2\rho,2\rho}(t_0,x_0)$, we can differentiate in $t$
equation \eqref{mainxieq} 
$$\partial_t\eta(t,x_i(t))+\partial_x\eta(t,x_i(t))\dot x_i(t)=0$$
and use \eqref{contractconserho} to get, for $|t-t_0|<B_0R$, 
\beq\label{dotxiimport}-\dot x_i(t)\geq c_0(L_0+L_1),\quad i=M_\rho,\ldots,N_\rho.\eeq
 \medskip

Next, we are going to construct a supersolution of~ \eqref{uepeq} in $Q_{B_0R,R}(t_0,x_0)$ for~$R<<\rho< 1$.

Since the maximum of $u^+-\eta$ is strict, there exists $\gamma_R>0$ such that 
\beq\label{u-etastrictmax} u^+-\eta\le- 2\gamma_R<0 \quad\text{in } Q_{2\rho,2\rho}(t_0,x_0)\setminus Q_{B_0R,R}(t_0,x_0).\eeq
Then, we define
\beq\label{Phiep}\Phi^\ep(t,x):=\begin{cases}h^\ep(t,x)+\ep M_\ep +\frac{\ep\delta L_1}{\alpha}-\ep\left\lfloor \frac{\gamma_R}{\ep}\right \rfloor&\text{for }(t,x)\in Q_{B_0R,\frac\rho 2}(t_0,x_0)\\
u^\ep(t,x)& \text{outside} \\
\end{cases}
\eeq
where

\beq\label{hfunct}\begin{split}
h^\ep(t,x)&=\sum_{i=M_\rho}^{N_\rho}\ep\left(\phi\left(\frac{x-x_i(t)}{\ep \delta}\right)+\delta\psi\left(\frac{x-x_i(t)}{\ep \delta}\right)\right)\\&+
\sum_{i=M_\ep}^{M_\rho-1}\ep\phi\left(\frac{x-x_i^0}{\ep \delta}\right)+\sum_{i=N_\rho+1}^{N_\ep}\ep\phi\left(\frac{x-x_i^0}{\ep \delta}\right),
\end{split}
\eeq
with  $\phi$ solution of  \eqref{phi} and  $\psi$  solution  of \eqref{psi} with $L=L_0+L_1$.
\begin{rem}
We choose $x_i(t)=x_i^0$ to be constant in time for $i=M_\ep,\ldots, M_\rho-1$ and $i=N_\rho+1,\ldots, N_\ep$, because we cannot bound
 the derivative $\dot{x}_i(t)$ for all $i=M_\ep,\ldots,N_\ep$. This will produce an error $O(R)$ when comparing $\Phi^\ep$ with $\eta$ 
 when $|t-t_0|<B_0R$ and $|x-x_0|\geq\rho-R$, see Lemma \ref{spproxetalem2}.
\end{rem}
\begin{lem}\label{supersolutiponlemma}
There exists $0<R<<\rho$ and $\ep_0=\ep_0(R,\rho)>0$ such that for any $\ep<\ep_0$,  the function $\Phi^\ep$ defined by \eqref{Phiep} satisfies
\beq\label{mainlem1}\Phi^\ep\ge u^\ep\quad\text{ outside }Q_{B_0R,R}(t_0,x_0),\eeq 
\beq\label{mainlem2}\delta\partial_t \Phi^\ep\ge\I [\Phi^\ep]-\displaystyle\frac{1}{\delta}W'\left(\frac{\Phi_\ep}{\ep}\right)\quad\text{in }Q_{B_0R,R}(t_0,x_0),\eeq
and 
\beq\label{mainlem3}\Phi^\ep\le \eta+o_\ep(1)-\ep\left\lfloor \frac{\gamma_R}{\ep}\right \rfloor\quad\text{ in }Q_{B_0R,R}(t_0,x_0).\eeq
\end{lem}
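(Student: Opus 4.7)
The proof breaks into the three separate checks \eqref{mainlem3}, \eqref{mainlem1} and \eqref{mainlem2}, and I would tackle them in that order. Two simplifications are worth recording at the outset: first, since $W$ has period $1$ and both $M_\ep$ and $\lfloor\gamma_R/\ep\rfloor$ are integers, all additive integer shifts drop out of $W'(\Phi^\ep/\ep)$; second, the constant terms in the definition of $\Phi^\ep$ are annihilated by $\partial_t$ and by $\I$, so when verifying \eqref{mainlem2} only $h^\ep+\ep\delta L_1/\alpha$ really contributes.

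For \eqref{mainlem3} I would apply Proposition \ref{approxpropfinal} to the non-decreasing $C^{1,1}$ function $\eta(t,\cdot)$ at each fixed $t\in(t_0-B_0R,t_0+B_0R)$. By construction \eqref{etaxi} the level points of $\eta(t,\cdot)$ coincide with $x_i(t)$ on $i\in[M_\rho,N_\rho]$, so the only deviation between $h^\ep(t,x)+\ep M_\ep$ and the sum in Proposition \ref{approxpropfinal} comes from (a) the frozen centers $x_i^0$ used for $i\notin[M_\rho,N_\rho]$ and (b) the $\delta\psi$ correction. Both discrepancies are controlled by the asymptotics \eqref{phiinfinity}, \eqref{psiinfinity} together with \eqref{i/k^2sum} and Lemma \ref{sumxialwaysfiniteprop}, using that the frozen indices correspond to centers at distance $\ge\rho$ from $x$; the total error is $o_\ep(1)$. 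Combined with $\ep\delta L_1/\alpha=o_\ep(1)$, this yields \eqref{mainlem3}.

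For \eqref{mainlem1}, outside $Q_{B_0R,\rho/2}(t_0,x_0)$ the inequality is tautological by definition of $\Phi^\ep$. On the annulus $Q_{B_0R,\rho/2}(t_0,x_0)\setminus Q_{B_0R,R}(t_0,x_0)$, the strict separation \eqref{u-etastrictmax} gives $u^+\le\eta-2\gamma_R$, while $u^\ep\le u^++o_\ep(1)$ uniformly on compacts by upper semicontinuity of the relaxed upper limit. Combining these with \eqref{mainlem3} and $\ep\lfloor\gamma_R/\ep\rfloor\le\gamma_R$ produces $\Phi^\ep-u^\ep\ge\gamma_R-o_\ep(1)\ge 0$ for $\ep$ small. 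The main effort therefore concentrates on \eqref{mainlem2}: at $(t,x)\in Q_{B_0R,R}(t_0,x_0)$, writing $z_i=(x-x_i(t))/(\ep\delta)$ and exploiting the scaling identity $\I[\phi(\cdot/(\ep\delta))](x)=(\ep\delta)^{-1}\I[\phi]((x-x_i)/(\ep\delta))$, one obtains $\I[h^\ep](x)=\delta^{-1}\sum_i\I[\phi](z_i)+\sum_i\I[\psi](z_i)$. Using $\I[\phi]=W'(\phi)$, the equation \eqref{psi} for $\psi$ at $z_{i_0}$ with $L=L_0+L_1$, and a Taylor expansion of $W'$ around $\phi(z_{i_0})$, the crucial discrete step is to invoke Proposition \ref{apprIcorall} applied to $\eta(t,\cdot)$ to convert $\sum_{i\ne i_0}\tilde\phi(z_i)$ (with $\tilde\phi:=\phi-H$) into $\frac{\delta}{\alpha}\I[\eta(t,\cdot)](x)\simeq\frac{\delta L_0}{\alpha}$, using continuity of $\I[\eta]$ on $Q_{2\rho,2\rho}(t_0,x_0)$. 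After cancellations the whole left-hand side of \eqref{mainlem2} collapses to $\phi'(z_{i_0})(-\dot x_{i_0}(t)-c_0(L_0+L_1))+L_1+o_\ep(1)$, which is $\ge L_1+o_\ep(1)\ge L_1/2$ by \eqref{dotxiimport} and $\phi'>0$.

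The principal obstacle is the bookkeeping in this last expansion. All $O(1/\delta)$ sub-leading contributions, coming from $\delta^{-1}\sum_{i\ne i_0}W'(\phi(z_i))$, from $W''(\phi(z_{i_0}))\sum_i\psi(z_i)$, and from the Taylor remainders, must be shown to cancel exactly against the $c_0L\phi'(z_{i_0})+\frac{L}{\alpha}(W''(\phi(z_{i_0}))-\alpha)$ produced by \eqref{psi}; the tails of $\psi$ and the quadratic remainder $O(|\tilde\phi|^2+|\psi|^2)$ must be bounded by $o_\ep(1)$ using \eqref{phi'infinity}, \eqref{psi'infinity} and \eqref{i/k^2sum}. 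In parallel, the frozen particles $i\notin[M_\rho,N_\rho]$, which do contribute to $h^\ep$, $\I[h^\ep]$ and $\partial_t h^\ep$, must be handled separately and shown to affect the expansion only through an $o_\ep(1)$ error, using that they sit at distance $\ge\rho\gg R$ from $x$.
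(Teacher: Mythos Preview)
Your second ``simplification'' at the outset contains a genuine error that undermines the verification of \eqref{mainlem2}. You write that ``the constant terms in the definition of $\Phi^\ep$ are annihilated by $\partial_t$ and by $\I$, so when verifying \eqref{mainlem2} only $h^\ep+\ep\delta L_1/\alpha$ really contributes.'' But $\Phi^\ep$ is \emph{not} globally equal to $h^\ep$ plus constants: by \eqref{Phiep} it equals $u^\ep$ outside $Q_{B_0R,\rho/2}(t_0,x_0)$. Hence, for $x\in Q_{B_0R,R}(t_0,x_0)$, the nonlocal operator $\I[\Phi^\ep(t,\cdot)](x)$ integrates over all of $\R$ and therefore sees $u^\ep$ in the region $|y-x_0|>\rho/2$. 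You cannot simply replace $\I[\Phi^\ep]$ by $\I[h^\ep]$. The paper closes this gap via Corollary \ref{coroIphiep}, which shows $\I[\Phi^\ep]\le\I[h^\ep]+o_\ep(1)+o_R(1)/\rho$; the proof uses $u^\ep\le u^++o_\ep(1)\le\eta+o_\ep(1)$ in the far region, together with Lemma \ref{spproxetalem2} (an approximation $|h^\ep+\ep M_\ep-\eta|\le o_\ep(1)+O(R)$ valid for $|x-x_0|\ge\rho-R$, which you never establish). Without this step the inequality in \eqref{mainlem2} is not justified.

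A second, more localized gap: when you invoke Proposition \ref{apprIcorall} to convert $\sum_{i\ne i_0}\tilde\phi(z_i)$ into $\tfrac{\delta}{\alpha}\I[\eta(t,\cdot)](x)$, the proposition produces an error $O(\gamma)$ where $\xs=x_{i_0}+\ep\gamma$, and $\gamma$ is in general only $O(1)$ (bounded by $2/\partial_x\eta(t_0,x_0)$), not small. The paper's Lemma \ref{estclaims} resolves this by observing that the sum appears multiplied by $W''(\phi(z_{i_0}))-W''(0)$, and when $|\gamma|\ge\delta$ this prefactor is $O(\delta/|\gamma|)$ by \eqref{phiinfinity}, so the product is $O(\delta)$; a separate easy case handles $|\gamma|<\delta$. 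Your claim that ``the whole left-hand side collapses to $\phi'(z_{i_0})(-\dot x_{i_0}-c_0(L_0+L_1))+L_1+o_\ep(1)$'' skips this case analysis and is not correct as stated: the residual errors are $o_\ep(1)+o_R(1)+o_\rho(1)+o_R(1)/\rho$, not a pure $o_\ep(1)$, and only become small after choosing $R\ll\rho\ll1$.
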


We are now in  position to conclude the proof of Theorem \ref{mainthm}.

By \eqref{mainlem1} and \eqref{mainlem2} and the comparison principle, Proposition \ref{comparisonbounded}, we have
$$u^\ep(t,x)\leq \Phi^\ep(t,x)\quad \text{for all }(t,x)\in Q_{B_0R,R}(t_0,x_0).$$
Passing to the  upper limit as $\ep\to0$ and using \eqref{mainlem3}  and  that $u^+(t_0,x_0)=\eta(t_0,x_0)$, we obtain
$$0\leq-\gamma_R,$$
a contradiction. This concludes the proof of Theorem \ref{mainthm}.
\subsection{Proof of Lemma \ref{supersolutiponlemma}}
We divide the proof of Lemma \ref{supersolutiponlemma} in several steps. We start with the following lemma.

\begin{lem}\label{spproxetalem1} There exists $\ep_0=\ep_0(R,\rho)>0$ such that for any $\ep<\ep_0$ and for any $(t,x)\in Q_{B_0R, \rho-R}(t_0,x_0)$, we have
$$|h^\ep(t,x)+\ep M_\ep-\eta(t,x)|\leq  o_\ep(1).$$ 
\end{lem}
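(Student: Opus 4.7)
The plan is to apply Proposition \ref{approxpropfinal} to the function $v=\eta(t,\cdot)$ at time $t$, whose level sets $\tilde{x}_i(t):=\inf\{x\mid \eta(t,x)=\ep i\}$ agree with the $x_i(t)$ of \eqref{etaxi} precisely for $i\in[M_\rho,N_\rho]$, and then to argue that the remaining discrepancies---the additive $\psi$-correction inside the middle block and the fact that $h^\ep$ uses the frozen points $x_i^0$ rather than $\tilde{x}_i(t)$ for indices outside $[M_\rho,N_\rho]$---each contribute only $o_\ep(1)$.

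I would first apply Proposition \ref{approxpropfinal} to $\eta(t,\cdot)$, noting that $\eta\in C_b^2$ has $C^{1,1}$ norm uniformly bounded in $t$, to obtain
\beqs
\eta(t,x)=\sum_{i=M_\ep}^{N_\ep}\ep\,\phi\left(\frac{x-\tilde{x}_i(t)}{\ep\delta}\right)+\ep M_\ep+o_\ep(1)
\eeqs
uniformly in $x$ and in $t$ near $t_0$. Subtracting this from $h^\ep+\ep M_\ep$ and cancelling the middle block (where $\tilde{x}_i(t)=x_i(t)$) leaves three pieces: the $\psi$-sum $\sum_{M_\rho}^{N_\rho}\ep\delta\,\psi((x-x_i(t))/(\ep\delta))$, the left-tail mismatch $\sum_{M_\ep}^{M_\rho-1}\ep[\phi((x-x_i^0)/(\ep\delta))-\phi((x-\tilde{x}_i(t))/(\ep\delta))]$, and the analogous right-tail mismatch on $[N_\rho+1,N_\ep]$. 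The $\psi$-sum is bounded in modulus by $\|\psi\|_\infty\,\delta\cdot\ep(N_\rho-M_\rho+1)\le C\delta=o_\ep(1)$ since $\delta\to0$ and $\ep(N_\rho-M_\rho)\le 2\|\eta\|_\infty$.

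For each tail I would invoke Lemma \ref{vapprowhatremains} twice, once with $v=\eta(t_0,\cdot)$ and level sets $x_i^0$, and once with $v=\eta(t,\cdot)$ and level sets $\tilde{x}_i(t)$. For $(t,x)\in Q_{B_0R,\rho-R}(t_0,x_0)$, the bounds \eqref{initialpartcoleposM}--\eqref{initialpartcoleposN} put $x_{M_\rho-1}^0$ and $x_{N_\rho+1}^0$ at distance more than $R$ from $x$, and the bounds \eqref{x_ncontrolenq}--\eqref{x_mcontrolenq} together with the monotonicity of $\eta(t,\cdot)$ do the same for $\tilde{x}_{M_\rho-1}(t)$ and $\tilde{x}_{N_\rho+1}(t)$. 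Lemma \ref{vapprowhatremains} then gives that each right tail is $O(\ep(1+\delta/R))=o_\ep(1)$ and that each left tail equals $\ep(M_\rho-1)-\ep M_\ep+o_\ep(1)$; the two left (resp.\ right) tails thus agree up to $o_\ep(1)$, and combining everything yields the claim.

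The main obstacle is the last step: verifying the distance conditions of Lemma \ref{vapprowhatremains} uniformly in $t$ for the level sets $\tilde{x}_i(t)$, which is exactly where the position bounds in Lemma \ref{partcilescontrollem} are essential. A minor nuance is that the quantities $M_\ep(t),N_\ep(t)$ attached to $\eta(t,\cdot)$ may drift by $O(1)$ from their values at $t_0$, shifting $\ep M_\ep$ by $O(\ep)=o_\ep(1)$ and modifying the summation ranges by single indices whose $\phi$-terms are each bounded by $\ep$, all harmlessly absorbed into the final $o_\ep(1)$ error.
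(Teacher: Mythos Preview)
Your proposal is correct and relies on the same ingredients as the paper---Lemma \ref{vapproxphisteps}/Proposition \ref{approxpropfinal}, Lemma \ref{vapprowhatremains}, and the $\psi$-bound of Lemma \ref{psismall}---but organizes the decomposition differently. The paper never introduces the moving level sets $\tilde x_i(t)$ for indices outside $[M_\rho,N_\rho]$: it simply splits $h^\ep$ into its three blocks, applies Lemma \ref{vapproxphisteps} directly to the middle block (Claim~1), and applies Lemma \ref{vapprowhatremains} once to each frozen tail (Claims~2 and~3). Your route instead approximates $\eta(t,\cdot)$ globally via Proposition \ref{approxpropfinal}, cancels the middle block, and then must control \emph{two} tail sums for each side---the frozen one and the moving one---by applying Lemma \ref{vapprowhatremains} twice. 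This is a bit longer and forces you to track the drift in $M_\ep(t)$, $N_\ep(t)$ (which, as you note, washes out as $O(\ep)$), but it buys nothing extra: the paper's direct decomposition is shorter and avoids the auxiliary objects entirely.
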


We postpone  the proof  of Lemma  \ref{spproxetalem1} to Section \ref{lemmatasec}.

\medskip
\noindent{\em Proof of \eqref{mainlem1}}.
Outside $Q_{B_0R,\frac\rho2}(t_0,x_0)$, by definition  \eqref{Phiep} of $\Phi^\ep$, $ \Phi^\ep(t,x)=u^\ep(t,x)$.

 Next, by Lemma \ref{spproxetalem1} and \eqref{u-etastrictmax}, for $(t,x)\in Q_{B_0R,\frac\rho2}(t_0,x_0)\setminus Q_{B_0R,R}(t_0,x_0)$, 
\beqs\begin{split} \Phi^\ep(t,x)&=h^\ep(t,x)+\ep M_\ep +\frac{\ep\delta L_1}{\alpha}-\ep\left\lfloor \frac{\gamma_R}{\ep}\right \rfloor\\
&\ge \eta(t,x)+
o_\ep(1)-\ep\left\lfloor \frac{\gamma_R}{\ep}
\right \rfloor\\
&
\geq u^+(t,x)+o_\ep(1)+2\gamma_R-\ep\left\lfloor \frac{\gamma_R}{\ep}\right \rfloor\\&
\ge u^\ep(t,x)
\end{split}\eeqs
for $\ep$ small enough, where in the last inequality we  have used that  $u^+(t,x)\ge u^\ep(t,x)+o_\ep(1)$ and $2\gamma_R-\ep\left\lfloor \frac{\gamma_R}{\ep}\right \rfloor\to\gamma_R>0$ as $\ep\to0$. This concludes the proof of  \eqref{mainlem1}.

\medskip
\noindent{\em Proof of \eqref{mainlem3}}. By Lemma  \ref{spproxetalem1}, for $(t,x)\in  Q_{B_0R,R}(t_0,x_0)$
$$\Phi^\ep(t,x)=h^\ep(t,x)+\ep M_\ep+\frac{\ep\delta L_1}{\alpha}-\ep\left\lfloor \frac{\gamma_R}{\ep}\right \rfloor
\leq \eta(t,x)+o_\ep(1)-\ep\left\lfloor \frac{\gamma_R}{\ep}\right \rfloor,$$
which gives \eqref{mainlem3}.

\medskip
Next, we need some preliminaries results in order to prove \eqref{mainlem2}.
\begin{lem}\label{psismall}
There exists $C>0$ independent of $\ep$ and $\rho$ such that, for any $x\in\R$, $$\left|\sum_{i=M_\rho}^{N_\rho}\ep \delta\psi\left(\frac{x-x_i(t)}{\ep \delta}\right)\right|\leq C\delta.$$
\end{lem}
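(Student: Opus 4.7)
The plan is to isolate the slowly-decaying $1/z$-asymptotic piece of $\psi$, whose contribution requires cancellation and is controlled via Lemma \ref{sumxialwaysfiniteprop}, and to estimate the integrable remainder crudely using the spacing of the $x_i(t)$'s. Set $z_i := (x - x_i(t))/(\ep\delta)$ and $L := \|\partial_x\eta\|_\infty$. By \eqref{proplippart1} applied to $v = \eta(t,\cdot)$, we have $x_{i+1}(t) - x_i(t) \ge \ep/L$; in particular, if $x_{i_0}$ denotes the $x_i(t)$ closest to $x$, then $|x - x_i(t)| \ge |i - i_0|\ep/(2L)$ for all $i \ne i_0$. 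Using \eqref{psiinfinity} together with the boundedness of $\psi$, I write
$$\psi(z) = K_2\,\chi(z) + g(z), \qquad \chi(z) := \frac{z}{1+z^2},$$
where a direct comparison of $K_2 z/(1+z^2)$ with $K_2/z$ for $|z|\ge 1$ and boundedness for $|z|\le 1$ shows $|g(z)| \le C/(1+z^2)$ uniformly in $z \in \R$.

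For the remainder,
$$\sum_{i=M_\rho}^{N_\rho} \ep\delta\,|g(z_i)| \le C\sum_i \frac{(\ep\delta)^3}{(\ep\delta)^2 + (x - x_i(t))^2} \le C\ep\delta + \sum_{k\ge 1}\frac{8L^2 C\,\ep\delta^3}{k^2} = O(\ep\delta) \le C\delta,$$
where the closest term is bounded by $\ep\delta$ and the spacing estimate controls the tail. For the $K_2\chi$-piece,
$$K_2 \sum_i \ep\delta\,\chi(z_i) \;=\; K_2\,(\ep\delta)^2 \sum_i \frac{x - x_i(t)}{(\ep\delta)^2 + (x-x_i(t))^2}.$$
The at most $1+2L\delta$ indices with $|x - x_i(t)| < \ep\delta$ contribute $O(\ep\delta)$ since each summand is bounded by $1/(2\ep\delta)$. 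For the rest one expands
$$\frac{x - x_i(t)}{(\ep\delta)^2+(x-x_i(t))^2} = \frac{1}{x-x_i(t)} + O\!\left(\frac{(\ep\delta)^2}{|x-x_i(t)|^3}\right),$$
and the $|x-x_i|^{-3}$-error sums to $O(\ep\delta^4)$ via $\sum_k k^{-3}$. The leading main piece $K_2\,\ep\delta^2 \sum_i \ep/(x-x_i(t))$ is controlled by Lemma \ref{sumxialwaysfiniteprop}, which yields $|\sum_i \ep/(x_i-x)| \le C$ with $C$ depending only on the $C^{1,1}$-norm of $\eta(t,\cdot)$; hence this piece is $O(\ep\delta^2) \le C\delta$. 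Summing all contributions yields the desired bound.

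\textbf{Main obstacle.} Lemma \ref{sumxialwaysfiniteprop} is stated for the full index range $M_\ep \le i \le N_\ep$ determined by a function $v \in C^{1,1}(\R)$, whereas our sum runs only over $M_\rho \le i \le N_\rho$. I would bridge this gap by applying the lemma to an auxiliary extension $\tilde\eta \in C^{1,1}(\R)$ that agrees with $\eta(t,\cdot)$ on a neighborhood of $[x_{M_\rho}(t), x_{N_\rho}(t)]$, is non-decreasing, and is constant (equal to $\ep M_\rho - \tfrac\ep2$, respectively $\ep N_\rho + \tfrac\ep2$) outside a slightly larger interval, so that the index range for $\tilde\eta$ is exactly $\{M_\rho,\dots,N_\rho\}$ and the corresponding sequence coincides with $\{x_i(t)\}_{M_\rho \le i \le N_\rho}$. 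Since the $C^{1,1}$-norm of such an extension can be bounded uniformly in $t$, $\ep$ and $\rho$ (the cutoff only introduces universal constants on top of $\|\eta\|_{C^{1,1}}$), the uniform bound from Lemma \ref{sumxialwaysfiniteprop} applied to $\tilde\eta$ transfers directly to our partial sum and yields the $\ep,\rho$-independent constant claimed in the statement.
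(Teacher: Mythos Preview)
Your decomposition $\psi=K_2\chi+g$ and the subsequent estimates are correct, but the whole apparatus is unnecessary, and the extension step you rightly flag as the main obstacle does not work as stated. The paper's proof is a single line: since $\psi\in C^{1,\beta}(\R)$ with $\psi(\pm\infty)=0$ (Lemma~\ref{psiinfinitylem}), $\psi$ is bounded, and
\[
\left|\sum_{i=M_\rho}^{N_\rho}\ep\delta\,\psi\!\left(\frac{x-x_i(t)}{\ep\delta}\right)\right|
\le \delta\|\psi\|_\infty\,\ep(N_\rho-M_\rho+1)
=\delta\|\psi\|_\infty\bigl(\eta(t,x_{N_\rho}(t))-\eta(t,x_{M_\rho}(t))+\ep\bigr)\le C\delta,
\]
using only that $\eta$ is bounded. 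No decay of $\psi$, no spacing estimate, no cancellation is needed.

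Regarding your extension $\tilde\eta$: its $C^{1,1}$ norm cannot be bounded uniformly in $\ep$. By Lemma~\ref{partcilescontrollem} and \eqref{etaxposirho}, $p:=\partial_x\eta(t,x_{M_\rho}(t))\ge \partial_x\eta(t_0,x_0)/2>0$. Any non-decreasing $\tilde\eta$ with $|\tilde\eta''|\le K$ that matches $\eta$ in value and slope at $x_{M_\rho}(t)$ satisfies $\tilde\eta'(y)\ge p-K(x_{M_\rho}(t)-y)$ for $y<x_{M_\rho}(t)$, hence $\ep M_\rho-\inf\tilde\eta\ge\int_{x_{M_\rho}-p/K}^{x_{M_\rho}}\tilde\eta'\ge p^2/(2K)$. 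Forcing the index range to start at $M_\rho$ requires $\ep M_\rho-\inf\tilde\eta<2\ep$, so $K>p^2/(4\ep)\to\infty$, contrary to your claim that the cutoff introduces only universal constants. Your strategy could still be rescued---for instance the crude bound $\bigl|\sum_{i\ne i_0}\ep/(x_i-x)\bigr|\le 2L\sum_{k=1}^{N_\rho-M_\rho}1/k=O(\log(1/\ep))$ is absorbed by the prefactor $\ep\delta^2$---but the one-line argument above renders all of this moot.
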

\begin{proof}
We have,
\beqs \begin{split}\left|\sum_{i=M_\rho}^{N_\rho}\ep \delta\psi\left(\frac{x-x_i(t)}{\ep \delta}\right)\right|&\le \delta \|\psi\|_\infty \ep( N_\rho- M_\rho+1)\\&=\delta \|\psi\|_\infty (\eta(t,x_{N_\rho}(t))- \eta(t,x_{M_\rho}(t))+\ep)\\&\leq  C\delta.
\end{split}
\eeqs

\end{proof}
\begin{lem}\label{spproxetalem2} There exists $\ep_0=\ep_0(R,\rho)>0$ such that for any $\ep<\ep_0$, 
 if $|t-t_0|<B_0R$,  and $|x-x_0|\geq \rho-R$, then 
 $$|h^\ep(t,x)+\ep M_\ep-\eta(t,x)|\leq o_\ep(1)+O(R).$$
\end{lem}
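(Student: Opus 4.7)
The plan is to compare $h^\ep(t,x) + \ep M_\ep$ with $\eta(t_0, x)$ via Proposition \ref{approxpropfinal} applied to $v = \eta(t_0,\cdot)$ (which is non-decreasing, non-constant, and $C^{1,1}$), and then absorb the time shift $t - t_0$ together with the $\psi$-correction into the $O(R) + o_\ep(1)$ error. This yields
\begin{equation*}
\sum_{i=M_\ep}^{N_\ep} \ep\, \phi\!\left(\frac{x-x_i^0}{\ep\delta}\right) + \ep M_\ep \;=\; \eta(t_0, x) + o_\ep(1)
\end{equation*}
uniformly in $x$. Substituting into the definition of $h^\ep$, the left-hand side of the claim decomposes as
\begin{equation*}
h^\ep(t,x)+\ep M_\ep - \eta(t,x) = \bigl[\eta(t_0,x)-\eta(t,x)\bigr] + A(t,x) + B(t,x) + o_\ep(1),
\end{equation*}
where
\begin{equation*}
A(t,x) = \sum_{i=M_\rho}^{N_\rho}\ep\left[\phi\!\left(\frac{x-x_i(t)}{\ep\delta}\right) - \phi\!\left(\frac{x-x_i^0}{\ep\delta}\right)\right]
\end{equation*}
and $B(t,x)$ is the $\psi$-sum.

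The first bracket is $O(R)$ by Lipschitz continuity of $\eta$ in $t$ since $|t-t_0| < B_0 R$, and $|B(t,x)| = O(\delta) = o_\ep(1)$ by Lemma \ref{psismall}. The substance is to show $|A(t,x)| \leq O(R) + o_\ep(1)$. By Lemma \ref{partcilescontrollem} and $|t-t_0| < B_0 R$ we have $|x_i(t) - x_i^0| \leq R$ for all $i \in [M_\rho, N_\rho]$. A term-by-term Lipschitz bound on $\phi$ gives a factor $R/(\ep\delta)$ per index and a useless total of order $R/\delta$, so instead I would invoke monotonicity of $\phi$ to write, with $z_i^0 = (x-x_i^0)/(\ep\delta)$,
\begin{equation*}
\left|\phi\!\left(\frac{x-x_i(t)}{\ep\delta}\right) - \phi(z_i^0)\right| \leq \int_{z_i^0 - R/(\ep\delta)}^{z_i^0 + R/(\ep\delta)} \phi'(s)\,ds.
\end{equation*}

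Summing over $i$ and exchanging sum with integral (Fubini/layer-cake),
\begin{equation*}
|A(t,x)| \leq \ep \int_\R \phi'(s)\, N(s)\, ds, \qquad N(s) := \#\bigl\{i \in [M_\rho, N_\rho] : |z_i^0 - s| \leq R/(\ep\delta)\bigr\}.
\end{equation*}
Lemma \ref{lemdistxi} applied to $\eta(t_0,\cdot)$ forces $|z_{i+1}^0 - z_i^0| \geq 1/(L\delta)$ with $L = \|\partial_x\eta\|_\infty$, so $N(s) \leq 2RL/\ep + 1$; combined with $\int_\R \phi' = 1$ (since $\phi$ transitions monotonically from $0$ to $1$ and is integrable by \eqref{phi'infinity}), this gives $|A(t,x)| \leq 2RL + \ep = O(R) + o_\ep(1)$. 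Collecting all the pieces produces the claimed estimate.

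The main obstacle is the bound on $A(t,x)$: the pointwise Lipschitz estimate is too crude and must be replaced by the layer-cake/Fubini argument above, which simultaneously exploits the integrability of $\phi'$ and the minimum spacing of the initial level-set points $x_i^0$ to recover the correct $O(R)$ dependence.
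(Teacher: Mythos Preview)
Your proof is correct, and it takes a genuinely different route from the paper's argument.

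The paper proceeds by case analysis in the spatial variable: first $|x-x_0|>\rho+4R$, where each of the three blocks of $h^\ep$ (the moving particles $M_\rho\le i\le N_\rho$ and the two frozen tails) is estimated separately via Lemmas~\ref{vapproxphisteps} and~\ref{vapprowhatremains}; then the transitional strip $\rho-R\le|x-x_0|\le\rho+4R$ is handled by a monotonicity sandwich, comparing with nearby points already treated. Your approach instead applies Proposition~\ref{approxpropfinal} once to the frozen level-set configuration $\{x_i^0\}$ to get a global approximation of $\eta(t_0,\cdot)$, and then isolates the entire time-dependence in the single correction term $A(t,x)$, which you bound by the layer-cake argument: writing each $\phi$-difference as an integral of $\phi'$ over a window of width $2R/(\ep\delta)$ and counting, via the spacing bound \eqref{proplippart1}, how many windows can cover a given point. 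This exploits $\int_\R\phi'=1$ and yields $|A|\le 2LR+\ep$ directly.

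Your argument is shorter, avoids the spatial case split entirely, and in fact never uses the hypothesis $|x-x_0|\ge\rho-R$: it delivers the bound $o_\ep(1)+O(R)$ uniformly in $x\in\R$. The paper's case-by-case approach, by contrast, reuses the same building blocks (Lemmas~\ref{vapproxphisteps}, \ref{vapprowhatremains}) that already went into Lemma~\ref{spproxetalem1}, keeping the two lemmas methodologically parallel. Note only the minor point that your invocation of Lemma~\ref{partcilescontrollem} for $|x_i(t)-x_i^0|\le R$ follows from integrating the velocity bound \eqref{xdotbound} over $|t-t_0|<B_0R$, together with $x_i(t_0)=x_i^0$.
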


We postpone  the proof  of Lemma  \ref{spproxetalem2} to Section \ref{lemmatasec}.


\begin{cor} \label{coroIphiep}There exists $\ep_0=\ep_0(R,\rho)>0$ such that for any $\ep<\ep_0$, $R<\rho/4$, 
and any $(t,x)\in Q_{B_0R,R}(t_0,x_0)$, we have
\beq\label{eq2cori1uephfinal} \I[\Phi^\ep(t,\cdot)](x)\leq \I[h^\ep(t,\cdot)](x)+o_\ep(1)+\frac{o_R(1)}{\rho}.\eeq
\end{cor}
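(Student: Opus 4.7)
The strategy is to reduce the claim to a bound on $\I[D]$, where $D := \Phi^\ep - h^\ep - C_\ep$ and $C_\ep := \ep M_\ep + \frac{\ep\delta L_1}{\alpha} - \ep\lfloor\gamma_R/\ep\rfloor$ is the additive constant appearing in the definition \eqref{Phiep} of $\Phi^\ep$. Since $\I$ annihilates constants and is linear, $\I[\Phi^\ep](x) - \I[h^\ep](x) = \I[D](x)$. By construction $D\equiv 0$ on $Q_{B_0R,\rho/2}(t_0,x_0)$, and equals $u^\ep - h^\ep - C_\ep$ outside. For $x\in Q_{B_0R,R}(t_0,x_0)$, the condition $R<\rho/4$ gives $D(x)=0$ and places the whole ball of radius $\rho/4$ around $x$ inside $\{|y-x_0|<\rho/2\}$, so the principal value collapses to
$$\I[D](x) \;=\; \frac{1}{\pi}\int_{|y-x_0|\ge \rho/2}\frac{D(y)}{(y-x)^2}\,dy.$$

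I would then split this integral into an inner annulus $\mathcal{A}_{\mathrm{in}}=\{\rho/2\le |y-x_0|\le 2\rho\}$ and a tail $\mathcal{A}_{\mathrm{out}}=\{|y-x_0|>2\rho\}$. On $\mathcal{A}_{\mathrm{in}}$ the region lies in $Q_{2\rho,2\rho}\setminus Q_{B_0R,R}$, so the strict-maximum property \eqref{u-etastrictmax} gives $u^+(t,y)\le\eta(t,y)-2\gamma_R$; combining the half-relaxed bound $u^\ep\le u^++o_\ep(1)$ on compact sets with the approximation $h^\ep+\ep M_\ep\ge\eta-o_\ep(1)-O(R)$ coming from Lemmas \ref{spproxetalem1}--\ref{spproxetalem2}, and using $\ep\lfloor\gamma_R/\ep\rfloor\le\gamma_R$ to simplify $\ep M_\ep - C_\ep$, yields $D(y)\le -\gamma_R + o_\ep(1)+O(R)$ on $\mathcal{A}_{\mathrm{in}}$. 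Discarding the favourable $-\gamma_R$ term and integrating against $(y-x)^{-2}$ over the annulus produces an upper bound at most $(o_\ep(1)+O(R))\cdot C/\rho$, which, viewing $\rho$ as fixed, is absorbed by $o_\ep(1)+o_R(1)/\rho$ since $O(R)=o_R(1)$.

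The main obstacle is the tail $\mathcal{A}_{\mathrm{out}}$: the strict-maximum gap is unavailable and compact-set convergence of $u^\ep$ to $u^+$ does not directly apply. The plan is to exploit two features simultaneously. First, because $u^+-\eta$ vanishes only at $(t_0,x_0)$ and $\gamma_R$ measures the gap on the complement of $Q_{B_0R,R}$, one has $\gamma_R=o_R(1)$ as $R\to 0$, so the $\gamma_R$ residue embedded in $\ep\lfloor\gamma_R/\ep\rfloor$ is itself $o_R(1)$. Second, both $u^\ep$ (bounded between integers $k_1,k_2$ by the comparison principle and the stationarity of integers) and $h^\ep$ (a finite explicit sum of the bounded profiles $\phi,\delta\psi$) are uniformly bounded, and, by monotonicity in $x$ of both $u^\ep$ and $\eta$ together with matching asymptotic values at $\pm\infty$ inherited from $u_0$, the relaxed-limit bound $u^\ep\le\eta+o_\ep(1)$ can be upgraded to hold on the whole tail. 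Coupled with Lemma \ref{spproxetalem2} (which applies throughout $|y-x_0|\ge\rho-R$), this gives $D(y)\le o_\ep(1)+O(R)+\gamma_R=o_\ep(1)+o_R(1)$ on $\mathcal{A}_{\mathrm{out}}$. Since $|y-x|\ge\rho$ for $y\in\mathcal{A}_{\mathrm{out}}$ and $x\in Q_{B_0R,R}$, integrating yields a tail contribution of at most $(o_\ep(1)+o_R(1))\cdot 2/\rho$, which again merges into $o_\ep(1)+o_R(1)/\rho$. The hardest technical point is precisely this global control of $D$ at infinity, where neither the strict-max nor compact-set tools of the interior argument are available and one must rely on the matching asymptotic saturation of $u^\ep$ and $h^\ep$.
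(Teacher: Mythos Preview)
Your approach is essentially the paper's: both reduce to bounding the integral of $D=\Phi^\ep-h^\ep-C_\ep$ over the region where it is nonzero, split according to distance, and invoke $u^\ep\le\eta+o_\ep(1)$ together with Lemmas~\ref{spproxetalem1}--\ref{spproxetalem2} to control $D$ there. The paper cuts at $|y-x|=\rho/4$ and $\rho$; you cut at $|y-x_0|=\rho/2$ and $2\rho$ and additionally exploit the $-\gamma_R$ cushion on the inner annulus---these are cosmetic differences.

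There is however a genuine gap in your tail argument. The assertion that $\eta$ has ``matching asymptotic values at $\pm\infty$ inherited from $u_0$'' is unfounded: $\eta$ is an arbitrary $C^2_b$ test function, constrained only by $\eta\ge u^+$, and its limits at infinity need not coincide with $\inf u_0$, $\sup u_0$. Monotonicity alone does not produce $u^\ep\le\eta+o_\ep(1)$ uniformly on $\mathcal{A}_{\mathrm{out}}$. (Also, the intermediate claim ``$u^\ep\le u^++o_\ep(1)$ on compact sets'' is false in general for a merely USC $u^+$; what is true, and suffices, is $u^\ep\le\eta+o_\ep(1)$ on compact sets, proven directly from continuity of $\eta$ and $u^+\le\eta$.) The paper's own proof glosses over the same point, writing ``$u^\ep\le u^++o_\ep(1)\le\eta+o_\ep(1)$'' without further comment. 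A clean fix is a truncation: for fixed $R,\rho$ split the tail at radius $M$; on $\{\rho<|y-x|<M\}$ the compact-set bound applies, while on $\{|y-x|>M\}$ the uniform bounds $|u^\ep|,|h^\ep|\le C$ give a contribution $O(1/M)$; then take $M=M(\ep)\to\infty$ slowly enough that the compact-set $o_\ep(1)$ survives, absorbing everything into $o_\ep(1)+o_R(1)/\rho$.
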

\begin{proof}
We have

$$\I[\Phi^\ep(t,\cdot)](x)=\I^{1,\frac\rho4}[\Phi^\ep(t,\cdot)](x)+\frac1\pi\int_{\frac\rho4<|y-x|<\rho}\frac{\Phi^\ep(t,y)-\Phi^\ep(t,x)}{(y-x)^2}dy+\I^{2,\rho}[\Phi^\ep(t,\cdot)](x).$$
If $(t,x)\in Q_{B_0R,R}(t_0,x_0)$ and $|y-x|<\rho/4$ then for $R<\rho/4$, $|y-x_0|<\rho/2$, that is $(t,y)\in Q_{B_0R,\frac{\rho}{2}}(t_0,x_0)$. Therefore, by the definition \eqref{Phiep} of $\Phi^\ep$, 
\beq\label{eq2cori1ueph}\I^{1,\frac\rho4}[\Phi^\ep(t,\cdot)](x)=\I^{1,\frac\rho4}[h^\ep(t,\cdot)](x).\eeq
If $(t,x)\in Q_{B_0R,R}(t_0,x_0)$ and $|y-x|>\rho$ then $|y-x_0|>\rho/2$,  therefore $\Phi^\ep(t,y)=u^\ep(t,y)$. Then,
by Lemma  \ref{spproxetalem1}, Lemma \ref{spproxetalem2} and using that $u^\ep\le u^++o_\ep(1)\leq\eta +o_\ep(1)$, we get
\beqs\begin{split}
\I^{2,\rho}[\Phi^\ep(t,\cdot)](x)&=\frac1\pi\int_{|y-x|>\rho}\frac{\Phi^\ep(t,y)-\Phi^\ep(t,x)}{(y-x)^2}dy\\
&=\frac1\pi\int_{|y-x|>\rho}\frac{u^\ep(t,y)-(h^\ep(t,x)+\ep M_\ep+O(\ep)+O(\gamma_R))}{(y-x)^2}dy\\
&\leq \frac1\pi\int_{|y-x|>\rho}\frac{\eta(t,y)-(h^\ep(t,x)+\ep M_\ep)}{(y-x)^2}dy+\frac{o_\ep(1)+O(\gamma_R)}{\rho}\\
&\le  \frac1\pi\int_{|y-x|>\rho}\frac{h^\ep(t,y)-h^\ep(t,x)}{(y-x)^2}dy+\frac{o_\ep(1)+O(\gamma_R)+O(R)}{\rho}.
\end{split}
\eeqs
Therefore,
\beq\label{eq1cori1ueph} \I^{2,\rho}[\Phi^\ep(t,\cdot)](x)\leq \I^{2,\rho}[h^\ep(t,\cdot)](x)+\frac{o_\ep(1)+o_R(1)}{\rho}.
\eeq
Finally, if $\rho/4<|y-x|<\rho$ then either $\Phi^\ep(t,y)=u^\ep(t,y)$ and by  Lemma \ref{spproxetalem1} and Lemma \ref{spproxetalem2}, 
$\Phi^\ep(t,y)\leq h^\ep(t,y)+\ep M_\ep+o_\ep(1)+O(R)$ or $\Phi^\ep(t,y)=h^\ep(t,y)+\ep M_\ep+o_\ep(1)+o_R(1)$. In both cases,
\beq\label{eq3cori1ueph}\int_{\frac\rho4<|y-x|<\rho}\frac{\Phi^\ep(t,y)-\Phi^\ep(t,x)}{(y-x)^2}dy\leq 
\int_{\frac\rho4<|y-x|<\rho}\frac{h^\ep(t,y)-h^\ep(t,x)}{(y-x)^2}dy+\frac{o_\ep(1)+o_R(1)}{\rho}.\eeq
From \eqref{eq2cori1ueph}, \eqref{eq1cori1ueph} and \eqref{eq3cori1ueph}, inequality \eqref{eq2cori1uephfinal} follows.
\end{proof}

\medskip
Now, we are ready to prove \eqref{mainlem2}.

\noindent{\em Proof of \eqref{mainlem2}}.

 Denote 
\beqs \Lambda := \delta\partial_t \Phi^\ep-\I [\Phi^\ep]+\frac{1}{\delta}W'\left(\frac{\Phi_\ep}{\ep}\right).
\eeqs
We want to show that $\Lambda(t,x) \geq 0$ for all $(t,x) \in Q_{B_0R,R}(t_0,x_0).$  
Fix $(\ts,\xs)\in Q_{B_0R,R}(t_0,x_0)$. 
By Corollary \ref{coroIphiep},  
\beq\begin{split} \label{scalingpro+cor}\I[\Phi^\ep(\ts,\cdot)](\xs) &\leq \I[h^\ep(\ts,\cdot)](\xs) + o_\ep(1) + \frac{o_R(1)}{\rho}\\
&=\sum_{i=M_\rho}^{N_\rho}\frac{1}{\delta}\I[\phi](z_i)+\sum_{i=M_\ep}^{M_\rho-1} \frac{1}{\delta}\I[\phi](z_i^0)  + \sum_{i=N_\rho+1}^{N_\ep}\frac{1}{\delta}\I[\phi](z_i^0)\\
&+\sum_{i=M_\rho}^{N_\rho}\I[\psi](z_i)+o_\ep(1) + \frac{o_R(1)}{\rho},
\end{split}
\eeq
where we denote $z_i^0 = (\xs-x_i^0)/(\ep \delta)$ and $z_i = (\xs-x_i(\ts))/(\ep \delta)$. Let $i_0$ be such that $x_{i_0}(\ts)$ is the closest point to $\xs$. Since $(\ts,\xs)\in Q_{B_0R,R}(t_0,x_0)$, by  Lemma \ref{partcilescontrollem} we   have $M_\rho < i_0 < N_\rho$. If  $\xs = x_{i_0} + \ep \gamma$, then   \eqref{proplippart2} and \eqref{etaxposirho} imply that $|\gamma|\leq 2/\partial_x\eta(t_0,x_0)$.
Note that $z_{i_0}=\gamma/\delta$. 
By \eqref{scalingpro+cor}, we have 
\beqs \begin{split}
\Lambda(\ts,\xs) &= \delta\partial_t \Phi^\ep(\ts,\xs)-\I[\Phi^\ep(\ts,\cdot)](\xs)+\frac{1}{\delta}W'\left(\frac{\Phi_\ep(\ts,\xs)}{\ep}\right)\\
&\geq \sum_{i=M_\rho}^{N_\rho} \left [-\dot{x}_i(\ts)\phi'(z_i)-\delta \dot{x}_i(\ts) \psi'(z_i) \right] + o_\ep(1)+\frac{o_R(1)}{\rho}\\
&-\sum_{i=M_\rho}^{N_\rho}\frac{1}{\delta}\I[\phi](z_i)-\sum_{i=M_\ep}^{M_\rho-1} \frac{1}{\delta}\I[\phi](z_i^0) 
- \sum_{i=N_\rho+1}^{N_\ep}\frac{1}{\delta}\I[\phi](z_i^0) - \sum_{i=M_\rho}^{N_\rho}\I[\psi](z_i)\\
&+\frac{1}{\delta}W'\left (  \sum_{i=M_\rho}^{N_\rho} [\phi(z_i) +\delta \psi(z_i)]+ \sum_{i=M_\ep}^{M_\rho-1} \phi(z_i^0)+\sum_{i=N_\rho+1}^{N_\ep} \phi(z_i^0)  + \frac{\delta L_1}{\alpha} \right ),
\end{split}
\eeqs
where we have used the periodicity of $W'$ in the last term. Let us denote
\beq \label{E0}E_0 := o_\ep(1) + \frac{o_R(1)}{\rho}, \eeq
and $$\tilde\phi(z):=\phi(z)-H(z),$$ where $H$ is the Heaviside function. 
Then, by \eqref{dotxiimport}, \eqref{phi}, the periodicity of $W'$ and making a  Taylor expansion of $W'$ around $\phi(z_{i_0})$, we obtain
\beqs \begin{split}
\Lambda(\ts,\xs) &\geq c_0(L_0+L_1)\phi'(z_{i_0}) \\
&+\frac{1}{\delta}\left ( -W'(\phi(z_{i_0})) - \sum_{\stackrel{i=M_\rho}{i\not=i_0}}^{N_\rho} W'(\tilde{\phi}(z_i)) - \sum_{i=M_\ep}^{M_\rho-1} W'(\tilde{\phi}(z_i^0)) - \sum_{i=N_\rho+1}^{N_\ep} W'(\tilde{\phi}(z_i^0)) \right )\\
&-\sum_{\stackrel{i=M_\rho}{i\not=i_0}}^{N_\rho} \I[\psi](z_i) -\I[\psi](z_{i_0})+\frac{1}{\delta}W'(\phi(z_{i_0})) \\
&+\frac{1}{\delta} W''(\tilde{\phi}(z_{i_0})) \left (\sum_{\stackrel{i=M_\rho}{i\not=i_0}}^{N_\rho} [\tilde{\phi}(z_i) +\delta \psi(z_i)]+
\delta \psi(z_{i_0}) + \sum_{i=M_\ep}^{M_\rho-1}\tilde{\phi}(z_i^0) + \sum_{i=N_\rho+1}^{N_\ep} \tilde{\phi}(z_i^0) + \frac{\delta L_1}{\alpha} \right ) \\
&+E_0+E_1+ E_2,\\
\end{split} \eeqs
where we define $E_1$  as follows
\beqs
E_1 := -\sum_{\stackrel{i=M_\rho}{i\not=i_0}}^{N_\rho} \dot{x}_i(\ts)\phi'(z_i) - \delta \sum_{\stackrel{i=M_\rho}{i\not=i_0}}^{N_\rho}\dot{x}_i(\ts) \psi'(z_i) -\delta\dot{x}_{i_0}(\ts)\psi'(z_{i_0}) ,
\eeqs
and  $E_2$ as the error from the Taylor expansion,
\beqs
E_2 := \frac{1}{\delta}O\left (\sum_{\stackrel{i=M_\rho}{i\not=i_0}}^{N_\rho}[ \tilde{\phi}(z_i) + \delta \psi(z_i)]+ \delta \psi(z_{i_0}) + \sum_{i=M_\ep}^{M_\rho-1}\tilde{\phi}(z_i^0) + \sum_{i=N_\rho+1}^{N_\ep} \tilde{\phi}(z_i^0) + \frac{\delta L_1}{\alpha} \right )^2.
\eeqs
Making a  Taylor expansion of $W'$ around 0, using that $W'(0)=0$ and rearranging the terms, we obtain
\beqs \begin{split}
\Lambda(\ts,\xs) &\geq c_0(L_0+L_1)\phi'(z_{i_0}) -\I[\psi](z_{i_0}) + W''(\phi(z_{i_0}))\psi(z_{i_0})\\
&+\frac{1}{\delta}\left (-W''(0)\sum_{\stackrel{i=M_\rho}{i\not=i_0}}^{N_\rho} \tilde{\phi}(z_i) - W''(0)\sum_{i=M_\ep}^{M_\rho-1} \tilde{\phi}(z_i^0)-W''(0)\sum_{i=N_\rho+1}^{N_\rho} \tilde{\phi}(z_i^0)\right ) \\&-\sum_{\stackrel{i=M_\rho}{i\not=i_0}}^{N_\rho} \I[\psi](z_i)\\
&+\frac{1}{\delta} W''(\tilde{\phi}(z_{i_0})) \left (\sum_{\stackrel{i=M_\rho}{i\not=i_0}}^{N_\rho} [\tilde{\phi}(z_i) + \delta\psi(z_i)] + \sum_{i=M_\ep}^{M_\rho-1}\tilde{\phi}(z_i^0) + \sum_{i=N_\rho+1}^{N_\ep} \tilde{\phi}(z_i^0) + \frac{\delta L_1}{\alpha} \right )\\
&+ E_0 + E_1 + E_2 + E_3,
\end{split} \eeqs
where $E_3$ is defined by 
\beqs
E_3 := \frac{1}{\delta}\sum_{\stackrel{i=M_\rho}{i\not=i_0}}^{N_\rho} O(\tilde{\phi}(z_i))^2 + \frac{1}{\delta}\sum_{i=M_\ep}^{M_\rho-1} O(\tilde{\phi}(z_i^0))^2 + \frac{1}{\delta}\sum_{i=N_\rho+1}^{N_\ep} O(\tilde{\phi}(z_i^0))^2.
\eeqs
Since $\psi$ solves \eqref{psi} with $L=L_0+L_1$, we have  that 
$$c_0(L_0+L_1)\phi'(z_{i_0}) -\I[\psi](z_{i_0}) + W''(\phi(z_{i_0}))\psi(z_{i_0})=-\frac{L_0+L_1}{\alpha}(W''(\phi(z_{i_0})) - W''(0)).$$
Therefore,  
\beqs \begin{split}
\Lambda(\ts,\xs) &\geq -\frac{L_0+L_1}{\alpha}(W''(\phi(z_{i_0})) - W''(0)) \\
&+(W''(\phi(z_{i_0})) - W''(0))\left ( \frac{1}{\delta}\sum_{\stackrel{i=M_\rho}{i\not=i_0}}^{N_\rho} \tilde{\phi}(z_i) +  \frac{1}{\delta}\sum_{i=M_\ep}^{M_\rho-1}\tilde{\phi}(z_i^0) +  \frac{1}{\delta}\sum_{i=N_\rho+1}^{N_\ep} \tilde{\phi}(z_i^0) \right )\\
&+W''(\tilde{\phi}(z_{i_0}))\frac{L_1}{\alpha} + W''(\tilde{\phi}(z_{i_0})) \sum_{\stackrel{i=M_\rho}{i\not=i_0}}^{N_\rho} \psi(z_i) -\sum_{\stackrel{i=M_\rho}{i\not=i_0}}^{N_\rho} \I[\psi](z_i)\\
&+ E_0 + E_1 + E_2 + E_3.
\end{split} \eeqs
Rearranging the terms and recalling that $\alpha=W''(0)$, we finally get 
\beq \begin{split}\label{Lambdafinal}
\Lambda(\ts,\xs) &\ge (W''(\phi(z_{i_0})) - W''(0))\left ( \frac{1}{\delta}\sum_{\stackrel{i=M_\rho}{i\not=i_0}}^{N_\rho} \tilde{\phi}(z_i) +  \frac{1}{\delta}\sum_{i=M_\ep}^{M_\rho-1}\tilde{\phi}(z_i^0) +  \frac{1}{\delta}\sum_{i=N_\rho+1}^{N_\ep} \tilde{\phi}(z_i^0) - \frac{L_0}{\alpha} \right )\\
&+L_1 + E_0 + E_1 + E_2 + E_3 + E_4, 
\end{split} \eeq
where $E_4$ is given by 
\beq\label{E4}
E_4 :=  W''(\tilde{\phi}(z_{i_0})) \sum_{\stackrel{i=M_\rho}{i\not=i_0}}^{N_\rho} \psi(z_i) -\sum_{\stackrel{i=M_\rho}{i\not=i_0}}^{N_\rho} \I[\psi](z_i).
\eeq
Next, for fixed $L_1>0$, we are going to show that all the other terms on the right-hand side of  
\eqref{Lambdafinal} are small.  Recall that 
\beqs
L_0=\I[\eta(t_0,\cdot)](x_0)  = \I^{1,\rho}[\eta(t_0,\cdot)](x_0) + \I^{2,\rho}[\eta(t_0,\cdot)](x_0).
\eeqs

\medskip

\begin{lem} \label{estclaims} 
We have,
\beq\label{estclaims1}
(W''(\phi(z_{i_0})) - W''(0))\left ( \frac{1}{\delta}\sum_{\stackrel{i=M_\rho}{i\not=i_0}}^{N_\rho} \tilde{\phi}(z_i) - \frac{ 1}{\alpha}\I^{1,\rho}[\eta(t_0,\cdot)](x_0)\right ) = o_\ep(1)+ o_R(1) + o_\rho(1) + O\left ( \frac{R}{\rho} \right ) ,
\eeq
and
\beq\label{estclaims2}
\frac{1}{\delta}\sum_{i=M_\ep}^{M_\rho-1}\tilde{\phi}(z_i^0) +  \frac{1}{\delta}\sum_{i=N_\rho+1}^{N_\ep} \tilde{\phi}(z_i^0) 
-   \frac{ 1}{\alpha}\I^{2,\rho}[\eta(t_0,\cdot)](x_0) =  o_\ep(1)  + o_\rho(1) + O\left ( \frac{R}{\rho} \right ) .
\eeq
\end{lem}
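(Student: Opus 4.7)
The plan is to convert each sum of $\tilde\phi$-values into a sum of the form $\sum \ep/(x_i-\xs)$ via the decay estimate \eqref{phiinfinity}, and then identify the result with a discrete approximation of $\I^{1,\rho}[\eta(t_0,\cdot)](x_0)$ or $\I^{2,\rho}[\eta(t_0,\cdot)](x_0)$ by invoking Proposition \ref{apprIcorall} and Lemma \ref{approxIlonglem}. The prefactor $W''(\phi(z_{i_0}))-W''(0)$ in the first claim is globally bounded, and its smallness when $|z_{i_0}|$ is large is what will absorb the otherwise fatal $O(\gamma)$ error produced by the short-range approximation at a non-particle point.

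For claim \eqref{estclaims1}, by Lemma \ref{lemdistxi} and \eqref{etaxposirho} one has $|\xs-x_i(\ts)|\ge c\ep$ for $i\in[M_\rho,N_\rho]\setminus\{i_0\}$, hence $|z_i|\ge c/\delta$. Then \eqref{phiinfinity} gives $\frac{1}{\delta}\tilde\phi(z_i)=-\frac{\ep}{\alpha\pi(\xs-x_i(\ts))}+O\bigl(\ep^2\delta/(\xs-x_i(\ts))^2\bigr)$, and summing, using \eqref{i/k^2sum} on the quadratic tail, the remainder is $O(\delta)=o_\ep(1)$. I then apply Proposition \ref{apprIcorall} to $\eta(\ts,\cdot)$ at $\xs$: the boundary increment $\frac{\eta(\ts,\xs+\rho)+\eta(\ts,\xs-\rho)-2\eta(\ts,\xs)}{\rho}$ is $O(\rho)=o_\rho(1)$ by the $C^{1,1}$ regularity of $\eta$, producing
$$\frac{1}{\alpha\pi}\sum_{\stackrel{i\neq i_0}{|x_i(\ts)-\xs|\le\rho}}\frac{\ep}{x_i(\ts)-\xs}=\frac{1}{\alpha}\I^{1,\rho}[\eta(\ts,\cdot)](\xs)+O(\gamma)+o_\rho(1)+o_\ep(1).$$
The index set $[M_\rho,N_\rho]\setminus\{i_0\}$ differs from $\{i\neq i_0:|x_i(\ts)-\xs|\le\rho\}$ only by $O(R/\ep)$ indices lying at distance $\sim\rho$ from $\xs$, producing a discrepancy of $O(R/\rho)$, and the continuity granted by $\eta\in C_b^2$ yields $\I^{1,\rho}[\eta(\ts,\cdot)](\xs)-\I^{1,\rho}[\eta(t_0,\cdot)](x_0)=o_R(1)$. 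Finally, multiplying by $W''(\phi(z_{i_0}))-W''(0)$, the non-$O(\gamma)$ pieces remain in the claimed error class since the factor is bounded; for the $O(\gamma)$ piece I split cases. If $|\gamma|\le\delta$ then $O(\gamma)=O(\delta)=o_\ep(1)$. If $|\gamma|>\delta$ then $|z_{i_0}|>1$ and \eqref{phiinfinity} combined with the Lipschitzness of $W''$ give $|W''(\phi(z_{i_0}))-W''(0)|=O(\delta/|\gamma|)$, so the product is again $O(\delta)=o_\ep(1)$.

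For claim \eqref{estclaims2} the argument is parallel but cleaner. By \eqref{initialpartcoleposM}--\eqref{initialpartcoleposN} and $|\xs-x_0|<R$, every $i\in[M_\ep,M_\rho-1]\cup[N_\rho+1,N_\ep]$ satisfies $|x_i^0-\xs|>\rho$, so $|z_i^0|>\rho/(\ep\delta)$ is large, and the same use of \eqref{phiinfinity} with \eqref{i/k^2sum} reduces the sum to $\frac{1}{\alpha\pi}\sum\ep/(x_i^0-\xs)+O(\delta)$. Lemma \ref{approxIlonglem} applied to $\eta(t_0,\cdot)$ at $\xs$ replaces this by $\frac{1}{\alpha}\I^{2,\rho}[\eta(t_0,\cdot)](\xs)+o_\rho(1)+o_\ep(1)$; the indices $\{i\in[M_\rho,N_\rho]:|x_i^0-\xs|>\rho\}$ that are included in the long-range sum but missing from my sum lie within $O(R)$ of $x_0\pm\rho$ and contribute $O(R/\rho)$; finally, replacing $\xs$ by $x_0$ costs another $O(R/\rho)$, obtained by differentiating \eqref{i2v} under the integral and observing that the kernel derivative integrates to $O(1/\rho)$.

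The main technical obstacle is the $O(\gamma)$ error in Proposition \ref{apprIcorall}, which is of size $O(1)$ in $\ep$ and would prevent any naive estimate from yielding the required smallness; it is neutralized only via the complementary smallness of $W''(\phi(z_{i_0}))-W''(0)$ furnished by \eqref{phiinfinity}. The bookkeeping of the symmetric differences between the $[M_\rho,N_\rho]$-based ranges and the $\rho$-based ranges is routine but must be done carefully, since the resulting $O(R/\rho)$ terms are precisely what governs the eventual limit $R\to 0$ followed by $\rho\to 0$ in the proof of \eqref{goal}.
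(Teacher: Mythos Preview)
Your proposal is correct and follows essentially the same approach as the paper: both convert the $\tilde\phi$-sums to $\sum\ep/(x_i-\xs)$ via \eqref{phiinfinity} with an $O(\delta)$ quadratic tail controlled by \eqref{i/k^2sum}, invoke Proposition \ref{apprIcorall} (resp.\ Lemma \ref{approxIlonglem}) to recover $\I^{1,\rho}$ (resp.\ $\I^{2,\rho}$), account for the mismatch between the index range $[M_\rho,N_\rho]$ and the distance condition $|x_i-\xs|\lessgtr\rho$ as an $O(R/\rho)$ boundary contribution, and finally dispose of the $O(\gamma)$ error by the dichotomy $|\gamma|\le\delta$ versus $|\gamma|>\delta$ using the decay of $W''(\phi(z_{i_0}))-W''(0)$ from \eqref{phiinfinity}. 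The only cosmetic difference is that the paper cuts at radius $\rho-R$ rather than $\rho$ when splitting the sum, which amounts to the same bookkeeping.
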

\begin{proof}
Let us prove \eqref{estclaims1}.
By \eqref{proplippart1}, for $i\not= i_0$, and $\ep$  (thus $\delta$) small enough
\beqs
|z_i| = \left | \frac{\xs-x_i(\ts)}{\ep \delta} \right | \geq \frac{L^{-1}}{2\delta} \geq 1.
\eeqs 
Then, by \eqref{phiinfinity}, for $i\not= i_0$,
\beqs
\left | \tilde{\phi}(z_i) + \frac{\ep \delta}{\alpha \pi (\xs - x_i(\ts))} \right | \leq \frac{K_1 \ep^2 \delta^2}{(\xs-x_i(\ts))^2},
\eeqs
which implies that 
\beqs
\Gamma_1 - \Gamma_2 \leq \sum_{\stackrel{i=M_\rho}{i\not=i_0}}^{N_\rho} \frac{\tilde{\phi}(z_i)}{\delta} - \frac{1}{\alpha}
\I^{1,\rho}[\eta(t_0,\cdot)](x_0) \leq \Gamma_1 + \Gamma_2,
\eeqs
where $\Gamma_1$ and $\Gamma_2$ are respectively defined by
\beqs 
\Gamma_1 := \frac{1}{\alpha}\left (\frac{1}{\pi}\sum_{\stackrel{i=M_\rho}{i\not=i_0}}^{N_\rho} \frac{\ep}{x_i(\ts)-\xs} - \I^{1,\rho}[\eta(t_0,\cdot)](x_0)\right ) \quad \text{and} \quad
\Gamma_2 := K_1\sum_{\stackrel{i=M_\rho}{i\not=i_0}}^{N_\rho} \frac{\ep^2 \delta}{(x_i-\xs(\ts))^2}.
\eeqs
Since $(\ts,\xs)\in Q_{B_0R,R}(t_0,x_0)$, by Lemma \ref{partcilescontrollem} we have that $x_{N_\rho}(\ts) - \xs > x_0+\rho-\xs>\rho-R$ and $\xs-x_{M_\rho}(\ts) >\xs-x_0+\rho> \rho - R.$
Then, 
\beqs \label{changeradiuspoint} \begin{split}
\sum_{\stackrel{i=M_\rho}{i\not=i_0}}^{N_\rho} \frac{\ep}{x_i(\ts)-\xs} &= \sum_{\stackrel{i=M_\rho}{\substack{i\not=i_0\\ |x_i(\ts)-\xs|\leq \rho-R}}}^{N_\rho} \frac{\ep}{x_i(\ts)-\xs} + \sum_{\stackrel{i=M_\rho}{\substack{ |x_i(\ts)-\xs|> \rho-R}}}^{N_\rho} \frac{\ep}{x_i(\ts)-\xs}\\
&=\sum_{\stackrel{i\not=i_0}{|x_i(\ts)-\xs|\leq \rho-R}} \frac{\ep}{x_i(\ts)-\xs} + \sum_{\stackrel{i=M_\rho}{|x_i(\ts)-\xs|> \rho-R}}^{N_\rho} \frac{\ep}{x_i(\ts)-\xs}.
\end{split} \eeqs
Notice that
\beq \begin{split}\label{allultimo}
\I^{1,\rho}[\eta(\ts,\cdot)](\xs) - \I^{1,\rho}[\eta(t_0,\cdot)](x_0) = o_R(1),\\   
\I^{1,\rho}[\eta(\ts,\cdot)](\xs) - \I^{1,\rho-R}[\eta(\ts,\cdot)](\xs) =o_R(1).\end{split} \eeq
By \eqref{allultimo} and Proposition \ref{apprIcorall}, we have 
\beqs \begin{split}
\sum_{\stackrel{i\not=i_0}{|x_i(\ts)-\xs|\leq \rho-R}} \frac{\ep}{x_i(\ts)-\xs} &= \I^{1,\rho-R}[\eta(\ts,\cdot)](\xs) + o_\ep(1) + o_\rho(1) + O(\gamma)\\
&=\I^{1,\rho}[\eta(t_0,\cdot)](x_0)   + o_R(1) + o_\ep(1) + o_\rho(1) + O(\gamma).
\end{split} \eeqs
Next, let $n$ be the number of points $x_i(\ts)$, $i=M_\rho,\ldots,N_\rho$, such that $|x_i(\ts)-\xs|> \rho-R$.
Since $|\xs-x_0|<R$ and by Lemma \ref{partcilescontrollem} $x_i(\ts)\in (x_0-(\rho+3R),x_0+\rho+3R)$, such points must belong 
to the set $\{\rho-2R<|x-x_0|<\rho+3R\}$ whose length is $10R$. Therefore, by  \eqref{proplippart1}, $n\le CR/\ep$.
Then, 
\beqs
\left|\sum_{\stackrel{i=M_\rho}{|x_i(\ts)-\xs|> \rho-R}}^{N_\rho} \frac{\ep}{x_i(\ts)-\xs} \right|\leq \sum_{\stackrel{i=M_\rho}{|x_i(\ts)-\xs|> \rho-R}}^{N_\rho} \frac{\ep}{\rho-R} = \frac{\ep}{\rho-R}n \leq \frac{\ep}{\rho-R}\cdot \frac{CR}{\ep} = O\left (\frac{R}{\rho} \right ).
\eeqs
We conclude  that 
\beqs
\Gamma_1 = o_\ep(1) + o_R(1)  + o_\rho(1) + O(\gamma)  + O\left ( \frac{R}{\rho} \right ).
\eeqs
Since in addition, by \eqref{i/k^2sum},  $\Gamma_2 =O(\delta)$, we have proven that 
\beq\label{lemclaim1estiphii1rho}
 \sum_{\stackrel{i=M_\rho}{i\not=i_0}}^{N_\rho} \frac{\tilde{\phi}(z_i)}{\delta} 
 - \frac{1}{\alpha}\I^{1,\rho}[\eta(t_0,\cdot)](x_0)=o_\ep(1) + o_R(1) + o_\rho(1) + O(\gamma)  + O\left ( \frac{R}{\rho} \right ).
\eeq
Notice that $O(\gamma)  $ is not necessarily small. 
Next, we consider two cases.

{\em Case 1: $|\gamma|<\delta$.} Then, $O(\gamma)=o_\ep(1)$ and 
\beqs\begin{split}
&|W''(\tilde{\phi}(z_{i_0})) - W''(0)| \left ( \sum_{\stackrel{i=M_\rho}{i\not=i_0}}^{N_\rho} \frac{\tilde{\phi}(z_i)}{\delta} -  \frac{1}{\alpha}\I^{1,\rho}[\eta(t_0,\cdot)](x_0)\right )\\& \leq 2\|W''\|_{\infty} \left(o_\ep(1) + o_R(1) + o_\rho(1) +O\left ( \frac{R}{\rho} \right )\right ),
\end{split}\eeqs
and \eqref{estclaims1} is proven.

{\em Case 2: $|\gamma|\geq \delta$.} By  \eqref{phiinfinity}, and using the fact that $z_{i_0}= \gamma/\delta$, we have
$$\left|\tilde{\phi}(z_{i_0})+\frac{\delta}{\alpha\pi\gamma}\right|\leq K_1\frac{\delta^2}{\gamma^2},$$
which implies that 
\beqs
|W''(\tilde{\phi}(z_{i_0})) - W''(0)| \leq |W'''(0)||\tilde{\phi}(z_{i_0})| + O(\tilde{\phi}(z_{i_0}))^2 \leq C\left (\frac{\delta}{|\gamma|} + \frac{\delta^2}{\gamma^2}\right)\leq C\frac{\delta}{|\gamma|}.
\eeqs
Hence, it follows that 
\beqs \begin{split} &|W''(\tilde{\phi}(z_{i_0})) - W''(0)|  \left ( \sum_{\stackrel{i=M_\rho}{i\not=i_0}}^{N_\rho} \frac{\tilde{\phi}(z_i)}{\delta}
 -  \frac{1}{\alpha}\I^{1,\rho}[\eta(t_0,\cdot)](x_0) \right )\\
&\leq  C\frac{\delta}{|\gamma|}\left (o_\ep(1) + o_R(1) + o_\rho(1) + O(\gamma) + O\left ( \frac{R}{\rho} \right ) \right )\\
&\leq o_\ep(1) + o_R(1) + o_\rho(1) + O\left ( \frac{R}{\rho} \right ).
\end{split} \eeqs
This completes the proof of \eqref{estclaims1}.

\medskip
Let us now turn to the proof of \eqref{estclaims2}.
As before, by  \eqref{phiinfinity}, for $i=M_\ep,\ldots, M_\rho-1$ and $i=N_\rho+1,\ldots,N_\ep$,
\beqs
\left | \tilde{\phi}(z_i^0) + \frac{\ep \delta}{\alpha \pi (\xs - x_i^0)} \right | \leq \frac{K_1 \ep^2 \delta^2}{(\xs-x_i^0)^2}.
\eeqs
Hence, we obtain
\beq \begin{split}\label{combinetet1}
\frac{1}{\delta}\sum_{i=M_\ep}^{M_\rho-1}\tilde{\phi}(z_i^0) +  \frac{1}{\delta}\sum_{i=N_\rho+1}^{N_\ep} \tilde{\phi}(z_i^0) &\leq \frac{1}{\alpha \pi} \sum_{i=M_\ep}^{M_\rho-1} \frac{\ep }{x_i^0-\xs} + K_1\sum_{i=M_\ep}^{M_\rho-1} \frac{ \ep^2 \delta}{(x_i^0-\xs)^2}\\
&+\frac{1}{\alpha \pi} \sum_{i=N_\rho+1}^{N_\ep} \frac{\ep }{x_i^0-\xs} + K_1\sum_{i=N_\rho+1}^{N_\ep} \frac{ \ep^2 \delta}{(x_i^0-\xs)^2},
\end{split} \eeq
and
\beq \begin{split}\label{combinetet2}
\frac{1}{\delta}\sum_{i=M_\ep}^{M_\rho-1}\tilde{\phi}(z_i^0) +  \frac{1}{\delta}\sum_{i=N_\rho+1}^{N_\ep} \tilde{\phi}(z_i^0) &\geq \frac{1}{\alpha \pi} \sum_{i=M_\ep}^{M_\rho-1} \frac{\ep }{x_i^0-\xs} -K_1\sum_{i=M_\ep}^{M_\rho-1} \frac{ \ep^2 \delta}{(x_i^0-\xs)^2}\\
&+\frac{1}{\alpha \pi} \sum_{i=N_\rho+1}^{N_\ep} \frac{\ep }{x_i^0-\xs} - K_1\sum_{i=N_\rho+1}^{N_\ep} \frac{ \ep^2 \delta}{(x_i^0-\xs)^2}.
\end{split} \eeq
By \eqref{i/k^2sum}, we have 
\beq \label{term1}
K_1\sum_{i=M_\ep}^{M_\rho-1} \frac{ \ep^2 \delta}{(x_i^0-\xs)^2} + K_1\sum_{i=N_\rho+1}^{N_\ep} \frac{ \ep^2 \delta}{(x_i^0-\xs)^2} = O(\delta).
\eeq
Moreover, since $|\xs-x_0|<R$ and $|x_i^0-x_0|>\rho+R$, for $i=M_\ep,\ldots, M_\rho-1$ and $i=N_\rho+1,\ldots,N_\ep$, it follows that $|x_i^0-\xs|>\rho$, thus,
\beqs \begin{split}
&\frac{1}{\pi}\sum_{i=M_\ep}^{M_\rho-1} \frac{\ep }{x_i^0-\xs} + \frac{1}{\pi}\sum_{i=N_\rho+1}^{N_\ep} \frac{\ep }{x_i^0-\xs}= \frac{1}{\pi}\sum_{\stackrel{i=M_\ep}{|x_i^0-\xs|> \rho}}^{N_\ep} \frac{\ep}{x_i^0 - \xs} - \frac{1}{\pi}\sum_{\stackrel{i=M_\rho}{|x_i^0-\xs|\geq \rho}}^{N_\rho} \frac{\ep}{x_i^0 - \xs}.\\
\end{split} \eeqs
By Lemma \ref{approxIlonglem},
$$ \frac{1}{\pi}\sum_{\stackrel{i=M_\ep}{|x_i^0-\xs|> \rho}}^{N_\ep} \frac{\ep}{x_i^0 - \xs}
=\I^{2,\rho}[\eta(t_0,\cdot)](x_0) + o_\ep(1) + o_\rho(1),
$$
and as before, 
$$\left| \frac{1}{\pi}\sum_{\stackrel{i=M_\rho}{|x_i^0-\xs|\geq \rho}}^{N_\rho} \frac{\ep}{x_i^0 - \xs}\right|\leq C\frac{R}{\rho}.$$
Therefore, 
\beq \label{term2} \begin{split}
&\frac{1}{\pi}\sum_{i=M_\ep}^{M_\rho-1} \frac{\ep }{x_i^0-\xs} + \frac{1}{\pi}\sum_{i=N_\rho+1}^{N_\ep} \frac{\ep }{x_i^0-\xs}=\I^{2,\rho}[\eta(t_0,\cdot)](x_0) + o_\ep(1) + o_\rho(1)+O\left(\frac{R}{\rho}\right).
\end{split} \eeq
Combining \eqref{combinetet1}, \eqref{combinetet2},  \eqref{term1} and \eqref{term2}, yields \eqref{estclaims2}.
This concludes the proof of the lemma.
\end{proof}
Next, we have a control over the remaining errors.
\begin{lem} \label{errorcontrol} For $i\ge 1$, the error $E_i$ satisfies 
\beqs 
E_i= O(\delta).
\eeqs
\end{lem}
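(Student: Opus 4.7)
The plan is to estimate $E_1,E_2,E_3,E_4$ one at a time, exploiting four ingredients: the decay estimates of Lemmas \ref{phiinfinitylem}--\ref{psiinfinitylem}; the summation bounds \eqref{i/k^2sum} and Lemma \ref{sumxialwaysfiniteprop}; the velocity control $|\dot x_i(\ts)|\le B_0^{-1}$ from Lemma \ref{partcilescontrollem}; and the identities from Lemma \ref{estclaims} (which imply, in particular, that the full $\tilde\phi$-sums appearing in \eqref{Lambdafinal} are $O(\delta)$, not merely $O(1)$ after division by $\delta$).

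The two easier errors $E_1$ and $E_3$ collapse to \eqref{i/k^2sum}. For $E_1$, since $|\dot x_i(\ts)|$ is uniformly bounded, the task reduces to controlling $\sum_{i\ne i_0}\phi'(z_i)$ and $\sum_{i\ne i_0}\psi'(z_i)$. The estimates \eqref{phi'infinity}--\eqref{psi'infinity} give $\phi'(z_i),|\psi'(z_i)|\le C\ep^2\delta^2/(\xs-x_i(\ts))^2$, and \eqref{i/k^2sum} then turns each sum into $O(\delta^2)$; the isolated term $\delta\dot x_{i_0}\psi'(z_{i_0})$ is $O(\delta)$ since $\psi'$ is bounded. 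Hence $E_1=O(\delta)$. For $E_3$, the decay $|\tilde\phi(z)|\le C/|z|$ from \eqref{phiinfinity} gives $\tilde\phi(z_i)^2,\tilde\phi(z_i^0)^2\le C\ep^2\delta^2/(\xs-x_i)^2$, and \eqref{i/k^2sum} yields $E_3=O(\delta)$ directly.

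For $E_2$, denote by $X$ the quantity inside the outer $O(\cdot)^2$. I would show that $X=O(\delta)$: the two $\tilde\phi$-sums in $X$ are $O(\delta)$ because Lemma \ref{estclaims} (via the explicit computation in \eqref{lemclaim1estiphii1rho} and \eqref{combinetet1}--\eqref{combinetet2}) shows that each of $\frac{1}{\delta}\sum_{i\ne i_0}\tilde\phi(z_i)$ and $\frac{1}{\delta}\sum\tilde\phi(z_i^0)$ equals a bounded multiple of $\I^{j,\rho}[\eta(t_0,\cdot)](x_0)$ plus controlled errors; the $\delta\psi$-sum is $O(\delta^2)$ by combining $|\psi(z)|\le C/|z|$ from \eqref{psiinfinity} with Lemma \ref{sumxialwaysfiniteprop} applied to $\eta(\ts,\cdot)$ (with the boundary contributions outside $[M_\rho,N_\rho]$ bounded by $C/\rho$); finally $\delta\psi(z_{i_0})$ and $\delta L_1/\alpha$ are trivially $O(\delta)$. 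Consequently $E_2=O(X^2)/\delta=O(\delta)$.

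The main obstacle is $E_4$, which mixes a Taylor expansion of $W''$ with a sum of nonlocal operators $\I[\psi](z_i)$. The idea is to use equation \eqref{psi} (with $L=L_0+L_1$) to replace each $\I[\psi](z_i)$ by $W''(\phi(z_i))\psi(z_i)+\frac{L}{\alpha}(W''(\phi(z_i))-\alpha)+c_0L\phi'(z_i)$. After using the periodicity identity $W''(\phi(z_i))=W''(\tilde\phi(z_i))$ (which holds because $W''$ is $1$-periodic and $\tilde\phi=\phi-H\in(-1,0]\cup[0,1)$) and Taylor-expanding $W''(\tilde\phi)$ around $0$ so that the expansion centers align across $i$ and $i_0$, the error $E_4$ splits into: (i) $c_0L\sum\phi'(z_i)=O(\delta^2)$ from \eqref{phi'infinity}; (ii) a linear-in-$\tilde\phi$ piece proportional to $\sum\tilde\phi(z_i)=O(\delta)$ by Lemma \ref{estclaims}; (iii) quadratic remainders bounded by $\sum\tilde\phi(z_i)^2=O(\delta^2)$ via \eqref{i/k^2sum}; and (iv) the residual $[W''(\tilde\phi(z_{i_0}))-\alpha]\sum\psi(z_i)$ estimated by a constant times $\sum\psi(z_i)=O(\delta)$, where the last bound follows from the $K_2/z$ asymptotic of $\psi$ in \eqref{psiinfinity} and Lemma \ref{sumxialwaysfiniteprop}. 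The delicate step is step (iii)--(iv): one must exploit the periodicity of $W''$ so that the Taylor remainders are genuinely quadratic in $\tilde\phi$ (making the series summable via \eqref{i/k^2sum}), rather than merely $|\tilde\phi|^{1+\beta}$ which would produce a divergent bound. Assembling these four contributions gives $E_4=O(\delta)$, completing the proof.
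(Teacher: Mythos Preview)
Your proposal is correct and tracks the paper's proof closely. For $E_1$ and $E_3$ the arguments are identical to the paper's: bound $|\dot x_i|$ by $B_0^{-1}$, use the pointwise decay \eqref{phi'infinity}, \eqref{psi'infinity}, \eqref{phiinfinity}, and finish with \eqref{i/k^2sum}. For $E_4$ both you and the paper substitute \eqref{psi} for $\I[\psi](z_i)$, Taylor–expand $W''(\tilde\phi(z_i))$ around $0$, and control the resulting pieces with \eqref{phiinfinity}, \eqref{phi'infinity}, \eqref{psiinfinity}, \eqref{i/k^2sum} and the signed–sum bound (the paper cites Proposition~\ref{apprIcorall} where you cite Lemma~\ref{sumxialwaysfiniteprop}, but the content is the same).

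The one place you diverge is $E_2$. You first show that the bracketed quantity $X$ is itself $O(\delta)$ (using \eqref{lemclaim1estiphii1rho} and \eqref{combinetet1}--\eqref{term2} for the $\tilde\phi$–sums, the asymptotic \eqref{psiinfinity} plus the signed sum for the $\psi$–sum), and then conclude $E_2=O(X^2)/\delta=O(\delta)$. The paper instead passes from $O(X)^2$ directly to the sum of squares of the individual summands and applies \eqref{i/k^2sum}. Your route is in fact the cleaner justification: the step $(\sum_i a_i)^2\le C\sum_i a_i^2$ would in general carry a factor equal to the number of terms ($\sim\rho/\ep$), so the cancellation encoded in Lemma~\ref{estclaims} is what really makes $X=O(\delta)$.

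One small correction. Periodicity of $W''$ does \emph{not} upgrade the Taylor remainder from $|\tilde\phi|^{\beta}$ to $\tilde\phi^2$. Periodicity only gives $W''(\phi(z_i))=W''(\tilde\phi(z_i))$, so that one may expand around $0$ where $\tilde\phi(z_i)$ is small; the order of the remainder is dictated by the regularity of $W''$ in \eqref{Wass}. The paper makes the same tacit move (it writes $W'''(0)$), so this is a wrinkle shared by both arguments, not a defect specific to your outline.
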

We postpone the proof of Lemma \ref{errorcontrol} to Section \ref{lemmatasec}.

\medskip

Let us finally complete the proof of \eqref{mainlem2}. By \eqref{Lambdafinal},  Lemma \ref{estclaims}, Lemma \ref{errorcontrol} and recalling the definition  \eqref{E0} of $E_0$, we  obtain
\beqs
\Lambda(\ts,\xs) \geq L_1 + o_\ep(1)+  o_R(1) + o_\rho(1) + \frac{o_R(1)}{\rho}.
\eeqs
We choose $R<<\rho<<1$ and $\ep_0$ so small that for any $\ep<\ep_0$,
$$ \left|o_\ep(1) +  o_R(1) + o_\rho(1) + \frac{o_R(1)}{\rho}\right|<\frac{L_1}{2}.$$Then, 
$$\Lambda(\ts,\xs) >\frac{L_1}{2}>0.$$
This completes the proof of \eqref{mainlem2}.
\section{Comparison between $u^+$ and $u^-$: proof of \eqref{comparisonu+u-}}\label{comparisonu+u-sec}
Let us consider the approximation of the initial  datum $u_0\in C^{1,1}(\R)$, given by Proposition \ref{approxpropfinal}:
\beq\label{initialdatumappr}\sum_{i=M_\ep}^{N_\ep} \ep \phi\left(\frac{x-x_{0,i}}{\ep\delta}\right)+\ep M_\ep,\eeq
where 
\beq\label{xi0u0}\begin{split} x_{0,i}:=\inf\{x\in\R\,|\, u_0( x)=\ep i\}\qquad i=M_\ep,\ldots,N_\ep,\\
 M_\ep:= \left\lceil\frac{\inf_\R u_0+\ep}{\ep}\right\rceil\quad\text{ and }\quad N_\ep:=\left \lfloor\frac{\sup_\R u_0-\ep}{\ep} \right \rfloor.
 \end{split}\eeq
 Then, for all $x\in\R$,
 \beq\label{approu_0}\left|\sum_{i=M_\ep}^{N_\ep} \ep \phi\left(\frac{x-x_{0,i}}{\ep\delta}\right)+\ep M_\ep-u_0(x)\right|\leq o_\ep(1).\eeq
Let us first show the following asymptotic behavior of $u^+$ and $u^-$.
\begin{lem}\label{u+-atinfinitylem}
For all $t>0$, 
\beq\label{u=limprop-inf}\lim_{x\to -\infty}u^-(t,x)= \lim_{x\to -\infty}u^+(t,x)=\inf_\R u_0,\eeq and 
\beq\label{u=limprop+inf}\lim_{x\to +\infty}u^-(t,x)= \lim_{x\to +\infty}u^+(t,x)=\sup_\R u_0.\eeq
Moreover, for all $x\in\R$,
\beq\label{uplusuminusat0}u^+(0,x)=u^-(0,x)=u_0(x).\eeq
\end{lem}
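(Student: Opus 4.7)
The easy inequalities $u^-(0,x)\le u_0(x)\le u^+(0,x)$ in \eqref{uplusuminusat0} follow from the continuity of $u_0$ by taking the sequences $t_\ep\equiv 0$, $y_\ep\to x$ in the definitions of $u^\pm$. All three statements of the lemma will then be deduced from a single two-sided sandwich
\begin{equation*}
u_0(x-ct)\le u^-(t,x)\le u^+(t,x)\le u_0(x+ct),\qquad(t,x)\in[0,+\infty)\times\R,\tag{$\star$}
\end{equation*}
for some constant $c>0$ depending only on $\|u_0\|_{C^{1,1}}$, $\phi$ and $W$. Indeed, letting $t\to 0^+$ in $(\star)$ yields the remaining half of \eqref{uplusuminusat0}; letting $x\to+\infty$ (resp.\ $x\to-\infty$) in $(\star)$, and using that the bounded, non-decreasing, Lipschitz function $u_0$ has limits $\sup u_0$ at $+\infty$ and $\inf u_0$ at $-\infty$, gives \eqref{u=limprop+inf} (resp.\ \eqref{u=limprop-inf}).

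To prove $(\star)$ I would trap $u^\ep$ between explicit barriers built from the initial-datum approximation \eqref{initialdatumappr} with all transition centers translated at uniform speed $\pm c$, and with the $\psi$-corrector of Section \ref{mainthmsec} tuned to a single global parameter $L$. Fix $L>\|\I[u_0]\|_\infty$ (finite since $u_0\in C^{1,1}(\R)$) and $c>c_0 L$, and let $\psi$ solve \eqref{psi} with this $L$. Define
\begin{equation*}
\bar v^\ep(t,x):=\sum_{i=M_\ep}^{N_\ep}\ep\left[\phi\!\left(\tfrac{x-x_{0,i}+ct}{\ep\delta}\right)+\delta\,\psi\!\left(\tfrac{x-x_{0,i}+ct}{\ep\delta}\right)\right]+\ep M_\ep+m_\ep\ep,
\end{equation*}
and $\underline v^\ep$ symmetrically with $-ct$, with the corrector corresponding to $-L$ in place of $\psi$, and with $-m_\ep\ep$ in place of $+m_\ep\ep$. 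Here $m_\ep\in\Z$ is chosen so that $m_\ep\ep$ dominates the $o_\ep(1)$ error in \eqref{approu_0} and the $O(\delta)$ contribution of the $\psi$-term, yet $m_\ep\ep\to 0$ as $\ep\to 0$. Being an integer multiple of $\ep$, $m_\ep\ep$ is invisible to $W'(\cdot/\ep)$ by periodicity, and so \eqref{approu_0} together with the boundedness of $\psi$ (Lemma \ref{psiinfinitylem}) gives $\underline v^\ep(0,\cdot)\le u_0\le\bar v^\ep(0,\cdot)$ for $\ep$ small.

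The core step is to verify that $\bar v^\ep$ is a global supersolution and $\underline v^\ep$ a global subsolution of \eqref{uepeq}. Plugging $\bar v^\ep$ into $\delta\partial_t-\I+\frac{1}{\delta}W'(\cdot/\ep)$, using the stationarity of each $\ep\phi(\cdot/(\ep\delta))$, Taylor-expanding $W'$ around $\phi(z_{i_0})$ (with $z_{i_0}$ the smallest-modulus argument), and identifying $\frac{1}{\delta}\sum_{i\ne i_0}\tilde\phi(z_i)$ with $\I[u_0](x+ct)/\alpha+o_\ep(1)$ via Proposition \ref{apprIcorall} (where $\tilde\phi=\phi-H$), the computation follows the same scheme as the analysis of $\Lambda$ in the proof of Lemma \ref{supersolutiponlemma}. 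The $\psi$-corrector is designed to cancel the leading $(W''(\phi(z_{i_0}))-W''(0))\,\I[u_0]/\alpha$ contribution, while the remaining term $(c-c_0 L)\sum_i\phi'(z_i)>0$ absorbs the $o_\ep(1)$ residual for $\ep$ small. Proposition \ref{comparisonuep} then gives $\underline v^\ep\le u^\ep\le\bar v^\ep$, and Proposition \ref{approxpropfinal} applied to the non-decreasing $C^{1,1}$ functions $u_0(\cdot\pm ct)$ (whose $\ep$-level sets are exactly $\{x_{0,i}\mp ct\}$) shows $\bar v^\ep\to u_0(\cdot+ct)$ and $\underline v^\ep\to u_0(\cdot-ct)$ locally uniformly as $\ep\to 0$; passing to half-relaxed limits yields $(\star)$.

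The hardest step is the global supersolution verification. In contrast with the local argument of Section \ref{mainthmsec}, a single fixed $\psi$ (hence a single fixed $L$) must serve at every $(t,x)\in(0,+\infty)\times\R$: this is what forces $L$ to be chosen as a universal upper bound on $|\I[u_0]|$, and the slack $c-c_0 L>0$ must dominate the pointwise residual globally rather than only on a shrinking neighborhood of a test point.
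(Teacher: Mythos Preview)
Your high-level strategy --- trap $u^\ep$ between barriers that are essentially $u_0$ translated at constant speed, then pass to half-relaxed limits to get the sandwich $(\star)$ --- is exactly the paper's approach. The gap is in the barrier itself.

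Your $\bar v^\ep$ omits the additive shift $\tfrac{\ep\delta L}{\alpha}$ that the paper includes in its barrier $h_0^\ep$. This shift is not cosmetic. After Taylor-expanding $W'$ around $\phi(z_{i_0})$ and using the $\psi$-equation \eqref{psi}, the shift contributes $W''(\phi(z_{i_0}))\tfrac{L}{\alpha}$, which combines with the $-\tfrac{L}{\alpha}(W''(\phi(z_{i_0}))-\alpha)$ coming from $\psi$ to leave a \emph{constant} term $+L$. The paper then only needs the crude uniform bound $\big|\tfrac{1}{\delta}\sum_{i\ne i_0}\tilde\phi(z_i)\big|\le C$ of Lemma~\ref{sumxialwaysfiniteprop}: the leftover $(W''(\phi(z_{i_0}))-\alpha)\tfrac{1}{\delta}\sum\tilde\phi$ is bounded by a fixed constant, and choosing $L$ larger than that constant gives $\Lambda\ge L-C+O(\delta)>0$ everywhere.

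Without the shift, two things go wrong. First, the $\psi$-corrector with a \emph{fixed} parameter $L$ produces $-\tfrac{L}{\alpha}(W''(\phi(z_{i_0}))-\alpha)$, not $-\tfrac{\I[u_0]}{\alpha}(W''-\alpha)$; since $\I[u_0]$ varies with position, the term $(W''(\phi(z_{i_0}))-\alpha)\big[\tfrac{1}{\delta}\sum\tilde\phi-\tfrac{L}{\alpha}\big]$ is $O(1)$, not $o_\ep(1)$ as you claim. (Note also that Proposition~\ref{apprIcorall} only applies for $\bar x\in(x_{M_\ep}+\rho,x_{N_\ep}-\rho)$ and carries an $O(\gamma)$ error; globally you are forced back to Lemma~\ref{sumxialwaysfiniteprop} anyway.) Second, your only positive contribution, $(c-c_0L)\sum_i\phi'(z_i)$, is not uniformly bounded below: at points far from every transition center --- say $\bar x\ll x_{0,M_\ep}$, or $\bar x$ in a wide plateau of $u_0$ --- one has $\sum_i\phi'(z_i)=O(\delta^2)$ by \eqref{phi'infinity} and \eqref{i/k^2sum}, while even the Taylor remainder $E_2$ alone is $O(\delta)$. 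So $(c-c_0L)\sum\phi'$ cannot absorb the residual globally, and $\bar v^\ep$ fails to be a supersolution at such points.

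The repair is minimal: include the shift $\tfrac{\ep\delta L}{\alpha}$, take speed exactly $c_0L$, drop the attempted identification with $\I[u_0]$ in favor of the uniform bound from Lemma~\ref{sumxialwaysfiniteprop}, and choose $L$ large. Your sandwich argument then goes through.
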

\begin{proof}
To prove the asymptotic behavior at infinity of $u^+$ and $u^-$, we will construct sub and supersolutions of \eqref{uepeq}.
Let $x_i(t)$, $i=M_\ep,\ldots,N_\ep$ be the solutions of 
\beqs\begin{cases}\dot{x}_i(t)=-c_0L,&t>0\\
x_i(0)=x_{0,i},
\end{cases}
\eeqs
with $L>0$ to be chosen and $x_{0,i},\,M_\ep,\,N_\ep$ defined by \eqref{xi0u0},
that is $x_i(t)=x_{0,i}-c_0Lt$. 
Consider the function
$$h_0^\ep(t,x):=\sum_{i=M_\ep}^{N_\ep}\ep\left(\phi\left(\frac{x-x_i(t)}{\ep \delta}\right)+\delta\psi\left(\frac{x-x_i(t)}{\ep \delta}\right)\right)+\frac{\ep\delta L}{\alpha}+\ep M_\ep+\ep \left\lceil\frac{o_\ep(1)}{\ep}\right\rceil,$$
where  $\phi$ and $\psi$ are respectively  solution of \eqref{phi} and \eqref{psi}. By   the fact that 
\beq\label{psiboundlemm6}\sum_{i=M_\ep}^{N_\ep}\left|\ep\delta\psi\left(\frac{x-x_i(t)}{\ep \delta}\right)\right|\leq \ep(N_\ep-M_\ep+1)\delta\|\psi\|_\infty\le(\sup_\R u_0-
\inf_\R u_0+\ep)\delta\|\psi\|_\infty,\eeq
and \eqref{approu_0}, 
we can choose $o_\ep(1)$ such that 
\beq\label{initialconduepL}u_0(x)\leq h^\ep_0(0,x).\eeq
We are going to show that for $L>0$ large enough, $h^\ep$ is supersolution of \eqref{uepeq}.
Fix $(\ts,\xs)\in(0,+\infty)\times\R$. Let $x_{i_0}(\bar t)$ be   the closest point to $\xs$ and let us  denote 
$z_i:=(\xs-x_i(\ts))/(\ep\delta)$. As in the proof of~Lemma \ref{supersolutiponlemma}, we compute
\beqs \begin{split}
\Lambda(\ts,\xs) &:= \delta\partial_t h_0^\ep(\ts,\xs)-\I[h_0^\ep(\ts,\cdot)](\xs)+\frac{1}{\delta}W'\left(\frac{h_0^\ep(\ts,\xs)}{\ep}\right)\\
&= \sum_{i=M_\ep}^{N_\ep} \left [c_0L\phi'(z_i)+\delta c_0L\psi'(z_i) \right] 
-\sum_{i=M_\ep}^{N_\ep} \frac{1}{\delta}\I[\phi](z_i) - \sum_{i=M_\ep}^{N_\ep}\I[\psi](z_i)\\
&+\frac{1}{\delta}W'\left (\sum_{i=M_\ep}^{N_\ep} [\phi(z_i) +\delta \psi(z_i) ]+ \frac{\delta L}{\alpha} \right ).
\end{split}
\eeqs

By  \eqref{phi} and making a Taylor expansion of $W'$ around $\phi(z_{i_0})$, we get
\beqs \begin{split}
\Lambda(\ts,\xs) &=c_0L\phi'(z_{i_0})-\I[\psi](z_{i_0})+W''(\tilde\phi(z_{i_0}))\psi
(z_{i_0})\\&
- \frac{1}{\delta}W'(\phi(z_{i_0}))- \frac{1}{\delta}\sum_{\stackrel{i=M_\ep}{i\not=i_0}}^{N_\ep}W'(\tilde\phi(z_i))
-\sum_{\stackrel{i=M_\ep}{i\not=i_0}}^{N_\ep}\I[\psi](z_i)\\&
+\frac{1}{\delta}W'(\phi(z_{i_0}))+\frac{1}{\delta}W''(\phi(z_{i_0}))\left (\sum_{\stackrel{i=M_\ep}{i\not=i_0}}^{N_\ep} [\tilde\phi(z_i) +\delta \psi(z_i) ]+ \frac{\delta L}{\alpha} \right )\\&+
\frac{1}{\delta}O\left (\sum_{\stackrel{i=M_\ep}{i\not=i_0}}^{N_\ep} [\tilde\phi(z_i) +\delta \psi(z_i) ]+ \frac{\delta L}{\alpha} \right )^2+
c_0L\sum_{\stackrel{i=M_\ep}{i\not=i_0}}^{N_\ep}\left [\phi'(z_i)+\delta \psi'(z_i) \right] +\delta \psi'(z_{i_0}),
\end{split}
\eeqs
where $\tilde\phi(z)=\phi(z)-H(z)$ with $H$ the Heaviside function. By \eqref{psi} and  making  a Taylor expansion of $W'$ around 0, we obtain
\beqas
\Lambda(\ts,\xs) &=&-\frac{L}{\alpha}(W''(\phi(z_{i_0})) - W''(0)) \\
&+&(W''(\phi(z_{i_0})) - W''(0))\sum_{\stackrel{i=M_\ep}{i\not=i_0}}^{N_\ep} \frac{\tilde\phi(z_i)}{\delta} +W'(\phi(z_{i_0}))\frac{L}{\alpha} \\
&+&\frac{1}{\delta}O\left (\sum_{\stackrel{i=M_\ep}{i\not=i_0}}^{N_\ep} [\tilde\phi(z_i) +\delta \psi(z_i) ]+ \frac{\delta L}{\alpha} \right )^2
+\frac{1}{\delta}\sum_{\stackrel{i=M_\ep}{i\not=i_0}}^{N_\ep}O(\tilde\phi(z_i))^2\\
&+&W''(\phi(z_{i_0}))\sum_{\stackrel{i=M_\ep}{i\not=i_0}}^{N_\ep}\psi(z_i)-\sum_{\stackrel{i=M_\ep}{i\not=i_0}}^{N_\ep}\I[\psi](z_i)
\\&+&c_0L\sum_{\stackrel{i=M_\ep}{i\not=i_0}}^{N_\ep}\left [\phi'(z_i)+\delta \psi'(z_i) \right]+\delta \psi'(z_{i_0}). 
\eeqas
Thus, recalling that $\alpha=W''(0)$,
\beqas
\Lambda(\ts,\xs) &=&(W''(\phi(z_{i_0})) - W''(0))\sum_{\stackrel{i=M_\ep}{i\not=i_0}}^{N_\ep} \frac{\tilde\phi(z_i)}{\delta} +L\\&
+&\frac{1}{\delta}O\left (\sum_{\stackrel{i=M_\ep}{i\not=i_0}}^{N_\ep} [\tilde\phi(z_i) +\delta \psi(z_i) ]+ \frac{\delta L}{\alpha} \right )^2
+\frac{1}{\delta}\sum_{\stackrel{i=M_\ep}{i\not=i_0}}^{N_\ep}O(\tilde\phi(z_i))^2\\
&+&W''(\phi(z_{i_0}))\sum_{\stackrel{i=M_\ep}{i\not=i_0}}^{N_\ep}\psi(z_i)-\sum_{\stackrel{i=M_\ep}{i\not=i_0}}^{N_\ep}\I[\psi](z_i)
\\&+&c_0L\sum_{\stackrel{i=M_\ep}{i\not=i_0}}^{N_\ep}\left [\phi'(z_i)+\delta \psi'(z_i) \right]+\delta \psi'(z_{i_0}). 
\eeqas
Notice that if $x_{i_0}(\ts)$  is the closest point to $\xs$, then $x_{0,i_0}$ is the closest point to $\xs+c_0L\ts$ and 
$\xs-x_{i}(\ts)=(\xs+c_0L\ts)-x_{0,i}$. Then, 
by \eqref{phiinfinity}, Lemma \ref{sumxialwaysfiniteprop} applied to $u_0\in C^{1,1}(\R)$, and \eqref{i/k^2sum},
\beqs  \left|\sum_{\stackrel{i=M_\ep}{i\not=i_0}}^{N_\ep} \frac{\tilde\phi(z_i)}{\delta}\right|
\le \frac{1}{\alpha\pi}\left| \sum_{\stackrel{i=M_\ep}{i\not=i_0}}^{N_\ep}\frac{\ep}{x_{0,i}-(\xs+c_0L\ts)}\right|+K_1\sum_{\stackrel{i=M_\ep}{i\not=i_0}}^{N_\ep}
\frac{\ep^2}{(x_{0,i}-(\xs+c_0L\ts))^2}\leq C.
\eeqs
Moreover, as in the proof of Lemma~\ref{supersolutiponlemma},
\beqas
&&\frac{1}{\delta}O\left (\sum_{\stackrel{i=M_\ep}{i\not=i_0}}^{N_\ep} [\tilde\phi(z_i) +\delta \psi(z_i) ]+ \frac{\delta L}{\alpha} \right )^2+\frac{1}{\delta}\sum_{\stackrel{i=M_\ep}{i\not=i_0}}^{N_\ep}O(\tilde\phi(z_i))^2
\\&&W''(\phi(z_{i_0}))\sum_{\stackrel{i=M_\ep}{i\not=i_0}}^{N_\ep}\psi(z_i)-\sum_{\stackrel{i=M_\ep}{i\not=i_0}}^{N_\ep}\I[\psi](z_i)
\\&&+c_0L\sum_{\stackrel{i=M_\ep}{i\not=i_0}}^{N_\ep}\left [\phi'(z_i)+\delta \psi'(z_i) \right]+\delta \psi'(z_{i_0})\\&&=O(\delta).
\eeqas
We conclude that 
\beqs \Lambda(\ts,\xs)\geq -C+L\ge 0,\eeqs choosing $L>0$  large enough (but independent of $\ep$ and $(\ts,\xs)$).
Since in addition \eqref{initialconduepL} holds true, by the comparison principle,  for all $(t,x)\in(0,+\infty)\times\R$, 
\beq\label{uep-h0epcopm}u^\ep(t,x)\leq h^\ep_0(t,x).\eeq
We will show that the previous inequality implies that for any $\tau>0$ there exists $\tilde K=\tilde K(\tau, T)$ such that for all
 $(t,x)\in [0,T]\times\R$ with $x<\tilde K$,
\beq\label{uniformliminf}u^\ep(t,x)\leq \inf_\R u_0+\tau+o_\ep(1).\eeq 
Fix $\tau>0$.
Since $\lim_{x\to-\infty}u_0(x)=\inf_\R u_0$,  there exists $K\in\R$ such that for all 
$x<K$,
\beqs 
 u_0(x)\leq \inf_\R u_0+\tau.
\eeqs
Given $T>0$, let $\tilde K:=K-c_0 LT$. Then, by \eqref{uep-h0epcopm}, \eqref{approu_0} and \eqref{psiboundlemm6}, for all $(t,x)\in [0,T]\times\R$ such that $x<\tilde K$,
\beqs\begin{split} u^\ep(t,x)&\leq h^\ep_0(t,x)\\&
=
\sum_{i=M_\ep}^{N_\ep}\ep\phi\left(\frac{x+c_0Lt-x_{i,0}}{\ep \delta}\right)+\ep M_\ep+o_\ep(1)
\\&\leq u_0(x+c_0Lt)+o_\ep(1)\\&\le \inf_\R u_0+\tau+o_\ep(1),
\end{split}\eeqs which 
 proves \eqref{uniformliminf}.
On the other hand,  by the comparison principle, 
 $u^\ep\geq \ep\lfloor\inf_\R u_0/\ep\rfloor$.
 Thus, \eqref{u=limprop-inf} follows.
 Similarly one can prove that the limits  \eqref{u=limprop+inf} hold true. 
 
Finally, to prove \eqref{uplusuminusat0}, take a sequence $(t_\ep,x_\ep)\to(0,x)$ as $\ep\to0$.
Then by \eqref{uep-h0epcopm},  \eqref{approu_0} and \eqref{psiboundlemm6},
$$u^\ep(t_\ep,x_\ep)\leq u_0(x_\ep+c_0Lt_\ep)+o_1(\ep)$$
which implies that $u^+(0,x)\leq u_0(x).$ On the other hand, $u^+(0,x)\geq \limsup_{\ep\to0} u^\ep(0,x)=u_0(x).$ We infer that $u^+(0,x)=u_0(x)$. Similarly, $u^-(0,x)=u_0(x)$. This proves  
  \eqref{uplusuminusat0} and concludes the proof of the lemma.
\end{proof}

Now, let $f^\ep$ be the smooth and positive global solution of equation  \eqref{transporteq} with initial datum 
$$f_0^\ep(x)=\frac{1}{\delta}\sum_{i=M_\ep}^{N_\ep} \phi'\left(\frac{x-x_{0,i}}{\ep\delta}\right)>0$$ provided by Theorem \ref{globalesistthmderieq}. 
Notice that by \eqref{phi'infinity},
$$f_0^\ep\in L^p(\R)\quad\text{for all }p\in [1,\infty].$$
Integrating   equation  \eqref{transporteq}  from $a$ to $b$ yields
\beq\label{abintegratedeq}\partial_t\int_{a}^b f^\ep(t,y)\,dy=c_0 f^\ep(t,b)\,\mathcal{H}[f^\ep(t,\cdot)](b)-c_0 f^\ep(t,a)\,\mathcal{H}[f^\ep(t,\cdot)](a).\eeq
Sending $a\to-\infty$ and $b\to+\infty$ and using that $f^\ep>0 $ is vanishing at infinity and $\mathcal{H}[f^\ep(t,\cdot)]\in L^\infty(\R)$, 
we see that $f^\ep(t,\cdot)\in L^1(\R)$ for all $t\ge0$  and 
$$\|f^\ep(t,\cdot)\|_{L^1(\R)}=\|f_0^\ep\|_{L^1(\R)}.$$
Following \cite{bkm}, one can actually show that for all $p\in[1,\infty)$,
$$\|f^\ep(t,\cdot)\|_{L^p(\R)}\leq \|f_0^\ep\|_{L^p(\R)},\quad \|f^\ep(t,\cdot)\|_{L^p(\R)}\leq C_p\|f_0^\ep\|_{L^1(\R)}^\frac{p+1}{2p}t^{-\frac{p-1}{2p}}.$$
By taking $b=x$ and $a=-\infty$ in \eqref{abintegratedeq}, we see that the function 
$$F^\ep(t,x)=\int_{-\infty}^x f^\ep(t,y)\,dy,$$ is a solution of~\eqref{ubareq} with initial datum 
$$\sum_{i=M_\ep}^{N_\ep} \ep \phi\left(\frac{x-x_{0,i}}{\ep\delta}\right).$$
Note that for all $t>0$, 
\beq\label{F-inf}\lim_{x\to-\infty}F^\ep(t,x)=0,\eeq
and by using that $\lim_{x\to+\infty}\phi(x)=1$ and $\lim_{x\to-\infty}\phi(x)=0$,
\beq\label{F+inf}\lim_{x\to+\infty}F^\ep(t,x)=\|f^\ep(t,\cdot)\|_{L^1(\R)}=\|f_0^\ep\|_{L^1(\R)}=\sum_{i=M_\ep}^{N_\ep}\ep=\ep(N_\ep-M_\ep+1).\eeq

Finally, $w^\ep(t,x)=F^\ep(t,x)+\ep M_\ep$ is the unique (and smooth) viscosity   solution of~\eqref{ubareq} with initial datum \eqref{initialdatumappr}. Moreover $\partial_xw^\ep(t,x)=f^\ep(t,x)>0$ for all $(t,x)\in (0,+\infty)\times\R$.
By \eqref{F-inf} and \eqref{F+inf}, we see that 
\beqs\lim_{x\to-\infty}w^\ep(t,x)=\ep M_\ep\quad\text{and}\quad \lim_{x\to+\infty}w^\ep(t,x)=\ep(N_\ep+1).\eeqs
In particular, by Lemma \ref{u+-atinfinitylem}  and the fact that $0\le\ep M_\ep-\inf_\R u_0\leq 2\ep$ and $0\le \sup_\R u_0-\ep N_\ep\le 2\ep$, we have that 
\beq\label{weinfty} \lim_{x\to-\infty}(u^+(t,x)-w^\ep(t,x))\leq 0\quad\text{and} \lim_{x\to+\infty}(u^+(t,x)-w^\ep(t,x))\leq \ep.
\eeq
Moreover, by  \eqref{approu_0} and \eqref{uplusuminusat0},
\beq\label{u+-weptime0}u^+(0,x)-w^\ep(0,x)=u_0(x)-\sum_{i=M_\ep}^{N_\ep} \ep \phi\left(\frac{x-x_{0,i}}{\ep\delta}\right)-\ep M_\ep\leq o_1(\ep).\eeq
We next show that 
\beq\label{comparisonu+wep}u^+(t,x)-w^\ep(t,x)\leq o_\ep(1) \quad\text{for all }(t,x)\in (0,+\infty)\times\R,\eeq 
for  $o_\ep(1)\ge  \ep$ for which such that \eqref{u+-weptime0} holds true.

 Suppose by contradiction that  for some $T>0$, 
\beq\label{comparisonu+wepcontra}\sup_{(t,x)\in(0,T)\times\R}u^+(t,x)-w^\ep(t,x)>o_\ep(1).\eeq
Then, for $\theta>0$  small enough the supremum of the function 
$$u^+(t,x)-w^\ep(t,x)-\frac{\theta}{T-t}-o_\ep(1) $$ is positive and by   \eqref{weinfty} and \eqref{u+-weptime0}
attended at some point $(\ts,\xs)\in (0,T)\times\R$. Then, $\eta(t,x)=w^\ep(t,x)+\frac{\theta}{T-t}+o_\ep(1)$ is a test function for $u^+$ as subsolution with 
 $\partial_x\eta(\ts,\xs)=\partial_x w^\ep(\ts,\xs)>0$, and  by \eqref{goal}, 
\beqs \partial_t w^\ep(\ts,\xs)<\frac{\theta}{(T-\ts)^2}+\partial_t w^\ep(\ts,\xs)\leq c_0\partial_x w^\ep(\ts,\xs)\I[w^\ep(\ts,\cdot)](\xs).\eeqs
On the other hand, since  $w^\ep$ is a smooth solution of~\eqref{ubareq} we have
$$\partial_t w^\ep(\ts,\xs)=c_0\partial_x w^\ep(\ts,\xs)\I[w^\ep(\ts,\cdot)](\xs).$$
We have reached a contradiction. This proves  \eqref{comparisonu+wep}. Moreover, by \eqref{approu_0} and  the comparison principle, 
$ |w^\ep- \us|\leq o_\ep(1)$. Therefore, passing to the limit as $\ep\to0$,
  we finally obtain $u^+\le \us$. Similarly we can prove that $\us\le u^-$. This completes the proof of    \eqref{comparisonu+u-}.

\begin{rem}
Notice that the viscosity solution   $\us=u^+=u^-$ of \eqref{ubareq}
satisfies 
\beqs\lim_{x\to -\infty}\us(t,x)= \inf_\R u_0\quad\text{and}\quad \lim_{x\to +\infty}\us(t,x)=\sup_\R u_0. \eeqs 
\end{rem}

\section{Proofs of Lemmas \ref{partcilescontrollem},  \ref{spproxetalem1},   \ref{spproxetalem2} and
 \ref{errorcontrol}}\label{lemmatasec}
\subsection{Proof of Lemma \ref{partcilescontrollem}}
Recall that by  \eqref{initialpartcoleposN}
\beq\label{xnrho-x0upper}x^0_{N_\rho}-x_0>\rho+R,\eeq
and by \eqref{initialpartcoleposN}, \eqref{etaxposirho} and \eqref{proplippart2},
\beq\label{xnrho-x0upperbis}x^0_{N_\rho}-x_0=(x^0_{N_\rho}- x^0_{N_\rho-1})+(x^0_{N_\rho-1}-x_0) \leq C\ep+\rho+R\le \rho+2R<2\rho,\eeq
for $\ep$ small enough.
Similarly,
$$-(\rho+2R)<x^0_{M_\rho}-x_0<-(\rho+R).$$
In particular, for all $i= M_\rho,...,N_\rho$, $(t_0,x_i^0)\in  Q_{2\rho,2\rho}(t_0,x_0)$.
Then by the regularity of $\eta$ and  \eqref{etaxposirho}, the ODE
\beq\label{odey}
\dot{y}_i(t) = -\frac{\partial_t \eta(t,y_i(t))}{\partial_x \eta(t,y_i(t))}
\eeq
has a unique local solution $y_i(t)$ such that $y_i(t_0)=x_i^0$ which is of class $C^1$ as long as $(t,y_i(t))\in Q_{2\rho,2\rho}(t_0,x_0)$.
Since in addition $\eta(t,y_i(t))=\ep i$ and $\eta$ is strictly increasing in $ Q_{2\rho,2\rho}(t_0,x_0)$, we must have $y_i=x_i$. 
Moreover, as long as $(t,x_i(t))\in Q_{2\rho,2\rho}(t_0,x_0)$, by \eqref{etaxposirho},
\beq\label{xdotlessB0}
|\dot{x}_i(t)| \leq \frac{2\|\partial_t\eta\|_\infty}{\partial_x \eta(t_0,x_0)} = B_0^{-1}.
\eeq
Next, let $-\infty\le t^*\leq +\infty$ be the first time such that 
\beqs  | x_{N_\rho}(t^*)-x^0_{N_\rho}| = R,\eeqs
and 
\beqs \tau:=\min\{2\rho, |t^*-t_0|\}.\eeqs
Then, for $t$ such that $|t-t_0|<\tau$,  
\beq\label{xnt-xnt0bound}|x_{N_\rho}(t) - x^0_{N_\rho}| <  R\eeq
and 
by \eqref{xnrho-x0upperbis},
\beq\label{upperboundxnx0}\begin{split}x_{N_\rho}(t)-x_0&\leq |x_{N_\rho}(t)-x^0_{N_\rho}|+x^0_{N_\rho}-x_0
\leq \rho+3R,
\end{split}\eeq
In particular, $(t,x_{N_\rho}(t))\in Q_{2\rho,2\rho}(t_0,x_0)$ and \eqref{xdotlessB0} holds true. 
Therefore, if $|t^*-t_0|<2\rho$, 
\beqs R = |x_{N_\rho}(t^*) - x_{N_\rho}(t_0)| =\left| \int_{t_0}^{t^*} \dot{x}_{N_\rho}(t)dt\right| \leq \frac{|t^*-t_0|}{B_0},
\eeqs
which implies that $|t^*-t_0| \geq B_0R.$ Hence, for $t$ such that $|t-t_0| < B_0R$, \eqref{upperboundxnx0} holds true which proves the upper bound in \eqref{x_ncontrolenq}.
For the lower bound, for $t$ such that $|t-t_0| < B_0R$, by \eqref{xnt-xnt0bound} and
 \eqref{xnrho-x0upper}, we have
\beqs \begin{split}
x_{N_\rho}(t) - x_0 &\ge  x^0_{N_\rho} - x_0-|x_{N_\rho}(t) - x^0_{N_\rho}|>\rho+R-R=\rho.
\end{split} \eeqs
This completes the proof of \eqref{x_ncontrolenq}. 
Similarly, one can  prove \eqref{x_mcontrolenq}. By the monotonicity of $\eta$, for $i=M_\rho,\ldots,N_\rho$, $x_{M_\rho}(t)<x_i(t)<x_{N_\rho}(t)$
and by \eqref{x_ncontrolenq} and \eqref{x_mcontrolenq}, for $|t-t_0|<B_0 R$, $|x_i(t)-x_0|\leq \rho+3R\leq2\rho$. Therefore, 
$x_i\in C^1(t_0-B_0R,t_0+B_0R)$ and \eqref{xdotlessB0} holds true. This proves \eqref{xdotbound} and concludes the proof of the lemma.

\subsection{ Proof of Lemma \ref{spproxetalem1}} 

We divide the proof of the lemma into three claims.

\medskip

\noindent{\em Claim 1: $\left|\sum_{i=M_\rho}^{N_\rho}\ep\phi\left(\frac{x-x_i(t)}{\ep \delta}\right)+\ep M_\rho-\eta(t,x)\right|\leq o_\ep(1)\left(1+\frac{\delta}{R}\right).$}

{\em Proof of Claim 1.} By Lemma \ref{partcilescontrollem}, if $(t,x)\in Q_{B_0R,\rho-R}(t_0,x_0)$, then 
$x\in (x_{M_\rho}(t)+R, x_{N_\rho}(t)-R)$.  Therefore, Claim 1 immediately follows from Lemma \ref{vapproxphisteps}.

\medskip

\noindent{\em Claim 2: $\left| \sum_{i=M_\ep}^{M_\rho-1}\ep\phi\left(\frac{x-x_i^0}{\ep \delta}\right)+\ep M_\ep-\ep M_\rho\right| \le 
C\ep \left(1+\frac{\delta}{R}\right).$}

{\em Proof of Claim 2.} By \eqref{initialpartcoleposM}, if $(t,x)\in Q_{B_0R,\rho-R}(t_0,x_0)$, then 
$x>x^0_{M_\rho-1}+R$. Claim 2 then follows from \eqref{vapprowhatremainseq1} and the fact that $\ep M_\rho=\eta (t_0, x^0_{M_\rho-1})+\ep$.

\medskip
\noindent{\em Claim 3:   $0\le\sum_{i=N_\rho+1}^{N_\ep}\ep\phi\left(\frac{x-x_i^0}{\ep \delta}\right)\le  C\ep \left(1+\frac{\delta}{R}\right)$.}

{\em Proof of Claim 3.} By  \eqref{initialpartcoleposN}, if $(t,x)\in Q_{B_0R,\rho-R}(t_0,x_0)$, then 
$x<x^0_{N_\rho+1}-R$. Claim 3 then immediately follows from \eqref{vapprowhatremainseq2}.

\medskip

Finally, the lemma is a consequence of Claims 1-3 and Lemma \ref{psismall}, by choosing $\ep$ so small that $\delta/R\leq1$.

\subsection{ Proof of Lemma \ref{spproxetalem2}} 

We first consider the case  $$|x-x_0|>\rho+4R.$$

Let us  assume $x>x_0+\rho+4R$.  Similarly one can prove the lemma for   $x<x_0-(\rho+4R)$.

We divide the proof  into three claims.

\medskip

\noindent{\em Claim 1: $\left|\sum_{i=M_\rho}^{N_\rho}\ep\phi\left(\frac{x-x_i(t)}{\ep \delta}\right)+\ep M_\rho-\ep N_\rho\right|\leq C\ep \left(1+\frac{\delta}{R}\right).$}

{\em Proof of Claim 1.} By Lemma \ref{partcilescontrollem}, if $|t-t_0|<B_0R$ and $x>x_0+\rho+4R$, then 
$x> x_{N_\rho}(t)+R$.  Therefore, Claim 1 immediately follows from \eqref{vapprowhatremainseq1} and the fact that
$\ep N_\rho=\eta(t, x_{N_\rho}(t))$.

\medskip

\noindent{\em Claim 2: $\left| \sum_{i=M_\ep}^{M_\rho-1}\ep\phi\left(\frac{x-x_i^0}{\ep \delta}\right)+\ep M_\ep-\ep M_\rho\right| \le 
C\ep \left(1+\frac{\delta}{R}\right).$}

{\em Proof of Claim 2.}  By \eqref{initialpartcoleposM}, if  $x>x_0+\rho+4R$, then 
$x>x^0_{M_\rho}+R$. Claim 2 then follows from \eqref{vapprowhatremainseq1} and the fact that $\ep M_\rho=\eta(t_0, x^0_{M_\rho-1})+\ep$.

\medskip
\noindent{\em Claim 3:   $\left|\sum_{i=N_\rho+1}^{N_\ep}\ep\phi\left(\frac{x-x_i^0}{\ep \delta}\right)+\ep N_\ep-\eta(t,x)\right|\le  
o_\ep(1)\left(1+\frac{\delta}{R}\right)+O(R).$}

{\em Proof of Claim 3.}  By \eqref{initialpartcoleposN}, if  $x>x_0+\rho+4R$ and in addition $x<x^0_{N_\ep}-R$,  then 
$x\in (x^0_{N_\rho}+R, x^0_{N_\ep}-R)$. Therefore  by Lemma \ref{vapproxphisteps} and the fact that $|t-t_0|<B_0R$, 

\beqs\begin{split}
\left|\sum_{i=N_\rho+1}^{N_\ep}\ep\phi\left(\frac{x-x_i^0}{\ep \delta}\right)+\ep N_\ep-\eta(t,x)\right|&=
\left|\sum_{i=N_\rho}^{N_\ep}\ep\phi\left(\frac{x-x_i^0}{\ep \delta}\right)+\ep N_\ep-\eta(t,x)\right|+o_\ep(1)\\
&\le  
\left|\sum_{i=N_\rho}^{N_\ep}\ep\phi\left(\frac{x-x_i^0}{\ep \delta}\right)+\ep N_\ep-\eta(t_0,x)\right|
\\&+|\eta(t_0,x)-\eta(t,x)|+o_\ep(1)\\&
\leq 
o_\ep(1)\left(1+\frac{\delta}{R}\right)+O(R),
\end{split}
\eeqs
and the claim is proven for  $x_0+\rho+4R<x<x^0_{N_\ep}-R$.

Next, if $x>x^0_{N_\ep}+R$, then by \eqref{vapprowhatremainseq2}, 
\beq\label{claim4lem2part2}
\left|\sum_{i=N_\rho+1}^{N_\ep}\ep\phi\left(\frac{x-x_i^0}{\ep \delta}\right)\right|\leq C\ep \left(1+\frac{\delta}{R}\right).
\eeq
Moreover, since $\ep N_\ep\to\sup_\R\eta(t_0,\cdot)$ as $\ep\to0$,  $|t-t_0|<B_0R$ and $\eta$ is non-decreasing, 
\beq\label{claim4lem2part3}\begin{split}
\eta(t,x)=\eta(t_0,x)+O(R)\leq \sup_\R\eta(t_0,\cdot)+O(R)=\ep N_\ep+o_\ep(1)+O(R),\\
\eta(t,x)\ge \eta(t, x^0_{N_\ep}+R)= \eta(t_0,x^0_{N_\ep})+O(R)= \ep N_\ep+O(R).
\end{split}
\eeq
Estimates  \eqref{claim4lem2part2} and  \eqref{claim4lem2part3}  imply Claim 3 for $x>x^0_{N_\ep}+R$.

Finally, let us assume $x^0_{N_\ep}-R\le x\le x^0_{N_\ep}+R$. Then, by using the monotonicity of $\phi$ and 
that the claim holds true for $x=x^0_{N_\ep}-2R$ and $x= x^0_{N_\ep}+2R$, we get
\beqs\begin{split} &\sum_{i=N_\rho+1}^{N_\ep}\ep\phi\left(\frac{x-x_i^0}{\ep \delta}\right)+\ep N_\ep-\eta(t,x)\\&\le  
\sum_{i=N_\rho+1}^{N_\ep}\ep\phi\left(\frac{x^0_{N_\ep}+2R-x_i^0}{\ep \delta}\right)+\ep N_\ep-\eta(t,x^0_{N_\ep}+2R)
+O(R)\\&\le
o_\ep(1)\left(1+\frac{\delta}{R}\right)+O(R),
\end{split}
\eeqs
and 
\beqs\begin{split} &\sum_{i=N_\rho+1}^{N_\ep}\ep\phi\left(\frac{x-x_i^0}{\ep \delta}\right)+\ep N_\ep-\eta(t,x)\\&\ge  
\sum_{i=N_\rho+1}^{N_\ep}\ep\phi\left(\frac{x^0_{N_\ep}-2R-x_i^0}{\ep \delta}\right)+\ep N_\ep-\eta(t,x^0_{N_\ep}-2R)
+O(R)\\&\ge
o_\ep(1)\left(1+\frac{\delta}{R}\right)+O(R).
\end{split}
\eeqs
This concludes the proof of Claim 3.
\medskip

The lemma for $|x-x_0|>\rho+4R$ is then a consequence of Claims 1-3 and Lemma \ref{psismall},  by choosing $\ep$ so small that $\delta/R\leq1$.

Next, let us consider the case 
$$\rho-R\leq |x-x_0|\leq \rho+4R.$$

Assume without loss of generality that $\rho-R\le x-x_0\le \rho+4R$.
Then,  by using  Lemma \ref{spproxetalem1}  at the point $x_0+\rho-2R$,  Lemma \ref{psismall} and the monotonicity of $\phi$, 
we get
\beqs\begin{split}
h^\ep(t,x)+\ep M_\ep&\geq \sum_{i=M_\rho}^{N_\rho}\ep\phi\left(\frac{x-x_i(t)}{\ep \delta}\right)+
\sum_{i=M_\ep}^{M_\rho-1}\ep\phi\left(\frac{x-x_i^0}{\ep \delta}\right)+\sum_{i=N_\rho+1}^{N_\ep}\ep\phi\left(\frac{x-x_i^0}{\ep \delta}\right)
-C\delta\\&
\geq \sum_{i=M_\rho}^{N_\rho}\ep\phi\left(\frac{x_0+\rho-2R-x_i(t)}{\ep \delta}\right)+
\sum_{i=M_\ep}^{M_\rho-1}\ep\phi\left(\frac{x_0+\rho-2R-x_i^0}{\ep \delta}\right)\\&+\sum_{i=N_\rho+1}^{N_\ep}\ep\phi\left(\frac{x_0+\rho-2R-x_i^0}{\ep \delta}\right)
-C\delta\\&
\ge 
 \eta(t,x_0+\rho-2R)+o_\ep(1)\\&
 \ge  \eta(t,x)+o_\ep(1)+O(R).
\end{split}
\eeqs
Moreover, by using that the lemma holds true  at the point $x_0+\rho+5R$, Lemma \ref{psismall} and  and the monotonicity of $\phi$, we get 
\beqs\begin{split}
h^\ep(t,x)+\ep M_\ep&\leq \sum_{i=M_\rho}^{N_\rho}\ep\phi\left(\frac{x-x_i(t)}{\ep \delta}\right)+
\sum_{i=M_\ep}^{M_\rho-1}\ep\phi\left(\frac{x-x_i^0}{\ep \delta}\right)+\sum_{i=N_\rho+1}^{N_\ep}\ep\phi\left(\frac{x-x_i^0}{\ep \delta}\right)
+C\delta\\&
\leq \sum_{i=M_\rho}^{N_\rho}\ep\phi\left(\frac{x_0+\rho+5R-x_i(t)}{\ep \delta}\right)+
\sum_{i=M_\ep}^{M_\rho-1}\ep\phi\left(\frac{x_0+\rho+5R-x_i^0}{\ep \delta}\right)\\&+\sum_{i=N_\rho+1}^{N_\ep}\ep\phi\left(\frac{x_0+\rho+5R-x_i^0}{\ep \delta}\right)
+C\delta\\&
\le 
 \eta(t,x_0+\rho+5R)+o_\ep(1)+O(R)\\&
 \le  \eta(t,x)+o_\ep(1)+O(R).
\end{split}
\eeqs
This concludes the proof of the lemma.

\subsection{ Proof of Lemma \ref{errorcontrol}.}
By  \eqref{xdotbound}, \eqref{phi'infinity}, \eqref{psi'infinity} and \eqref{i/k^2sum}, we have
\beqs\begin{split}|E_1|&\le B_0^{-1} \left(\sum_{\stackrel{i=M_\rho}{i\not=i_0}}^{N_\rho} \phi'(z_i) + \delta \sum_{\stackrel{i=M_\rho}{i\not=i_0}}^{N_\rho} |\psi'(z_i)| +\delta|\psi'(z_{i_0}) |\right)\\&
\le  B_0^{-1} \left(( K_1 + \delta K_3)\delta^2\sum_{\stackrel{i=M_\rho}{i\not=i_0}}^{N_\rho}\frac{\ep^2}{(x_i-\xs)^2}+\delta\|\psi'\|_\infty\right)\\&
\leq C\delta, 
\end{split}\eeqs
which gives $E_1 = O(\delta)$. 

Now, for $E_2$,  using \eqref{phiinfinity}, \eqref{psiinfinity}, and \eqref{i/k^2sum} we get
\beqs \begin{split}
|E_2|
&\leq \frac{C}{\delta}\left ( \sum_{\stackrel{i=M_\rho}{i\not=i_0}}^{N_\rho} \tilde{\phi}(z_i)^2 + \delta^2 \sum_{\stackrel{i=M_\rho}{i\not=i_0}}^{N_\rho}\psi(z_i)^2 +\delta^2 \psi(z_{i_0})^2+ \sum_{i=M_\ep}^{M_\rho-1}\tilde{\phi}(z_i^0)^2 + \sum_{i=N_\rho+1}^{N_\ep} \tilde{\phi}(z_i^0)^2 + \frac{\delta^2 L_1^2}{\alpha^2} \right )\\
&\leq \frac{C}{\delta}\left ( \sum_{\stackrel{i=M_\ep}{i\not=i_0}}^{N_\ep} \frac{\ep^2 \delta^2}{(x_i-\xs)^2} +\frac{\delta^2 L_1^2}{\alpha^2} 
+\delta^2\|\psi\|^2_\infty\right )\\
&\leq C\delta,
\end{split}
\eeqs
 that is, $E_2 = O(\delta)$.

Similarly, \eqref{phiinfinity} and \eqref{i/k^2sum} imply that $E_3 = O(\delta).$

Finally, consider $E_4$ defined by \eqref{E4}.
From  \eqref{psiinfinity}, \eqref{i/k^2sum}, Proposition \ref{apprIcorall} and  the fact that $|\gamma|\leq 2/\partial_x\eta(t_0,x_0)$,
\beqs 
\left|W''(\tilde{\phi}(z_{i_0})) \sum_{\stackrel{i=M_\rho}{i\not=i_0}}^{N_\rho} \psi(z_i) \right| \leq C\delta\left| \sum_{\stackrel{i=M_\rho}{i\not=i_0}}^{N_\rho} \frac{\ep }{x_i-\xs} \right|+ C \delta^2\left|\sum_{\stackrel{i=M_\rho}{i\not=i_0}}^{N_\rho} \frac{\ep^2 }{(x_i-\xs)^2} \right| \leq C\delta.
\eeqs 
Now, using \eqref{psi} and a Taylor expansion, we get 
\beqs \begin{split}
\I[\psi](z_i)&=W''(\tilde{\phi}(z_i))\psi(z_i)+\frac{L_0+L_1}{\alpha}(W''(\tilde{\phi}(z_i))-W''(0))+c_0(L_0+L_1)\phi'(z_i)\\
&=W''(0)\psi(z_i) + \frac{L_0+L_1}{\alpha}W'''(0)\tilde{\phi}(z_i) +O(\tilde{\phi}(z_i))\psi(z_i) + O(\tilde{\phi})^2 \\&+ c_0(L_0+L_1)\phi'(z_i).
\end{split}
\eeqs
Hence, again from \eqref{phiinfinity}, \eqref{phi'infinity}, \eqref{psiinfinity}, \eqref{i/k^2sum} and Proposition \ref{apprIcorall}, we obtain $$\sum_{\stackrel{i=M_\rho}{i\not=i_0}}^{N_\rho} \I[\psi](z_i) = O(\delta).$$ We infer that 
$E_4 = O(\delta)$. This completes the proof of the lemma.

\end{document}